\newtheorem{lemma}{Lemma}
\newtheorem{proposition}{Proposition}
\newtheorem{theorem}{Theorem}
\newtheorem{corollary}{Corollary}
\theoremstyle{definition}
\newtheorem{remark}{Remark}
\newcommand{\commentout}[1]{}
\newcommand{\cB}{\ensuremath{\mathcal{B}} \xspace}
\newcommand{\N}{\ensuremath{\mathbb{N}} \xspace}
\begin{document}

\thispagestyle{empty}
\centerline{\Large\bf Cop and robber game and hyperbolicity}

\vspace{10mm}

\centerline{{\sc J\'er\'emie Chalopin}$^{1}$, {\sc Victor
Chepoi}$^{1}$, {\sc Panos Papasoglu}$^{2}$, and {\sc Timoth\'ee Pecatte}$^{3}$}

\vspace{3mm}

\date{\today}

\medskip
\begin{small}
\centerline{$^{1}$Laboratoire d'Informatique Fondamentale, Aix-Marseille Universit\'e and CNRS,}
\centerline{Facult\'e des Sciences de Luminy, F-13288 Marseille Cedex 9, France}

\centerline{\texttt{\{jeremie.chalopin, victor.chepoi\}@lif.univ-mrs.fr}}

\medskip
\centerline{$^2$Mathematical Institute}
\centerline {24--29 St Giles, Oxford, OX1 3LB, England}
\centerline{\texttt{ papazoglou@maths.ox.ac.uk}}

\medskip
\centerline{$^{3}$Ecole Normale Sup\'erieure de Lyon,}
\centerline{46, all\'ee d'Italie, 69342 Lyon Cedex 07, France}
\centerline{\texttt{timothee.pecatte@ens-lyon.fr}}

\end{small}

\bigskip\bigskip\noindent {\footnotesize {\bf Abstract.}  In this
  note, we prove that all cop-win graphs $G$ in the game in which the
  robber and the cop move at different speeds $s$ and $s'$ with
  $s'<s$, are $\delta$-hyperbolic with $\delta=O(s^2)$. We also show
  that the dependency between $\delta$ and $s$ is linear if
  $s-s'=\Omega(s)$ and $G$ obeys a slightly stronger condition.  This
  solves an open question from the paper \emph{J. Chalopin et al., Cop
    and robber games when the robber can hide and ride, SIAM
    J. Discr. Math. 25 (2011) 333--359}. Since any $\delta$-hyperbolic
  graph is cop-win for $s=2r$ and $s'=r+2\delta$ for any $r>0$, this
  establishes a new --game-theoretical-- characterization of Gromov
  hyperbolicity. We also show that for weakly modular graphs the
  dependency between $\delta$ and $s$ is linear for any $s'<s$. Using
  these results, we describe a simple constant-factor approximation of
  the hyperbolicity $\delta$ of a graph on $n$ vertices in $O(n^2)$
  time when the graph is given by its distance-matrix.}

\section{Introduction}

The cop and robber game originated in the 1980's with the work of
Nowakowski, Winkler \cite{NowWin}, Quilliot \cite{Qui83}, and Aigner,
Fromme \cite{AigFro}, and since then has been intensively investigated
by many authors under numerous versions and generalizations.
Cop and robber is a  pursuit-evasion game played on finite undirected
graphs $G=(V,E)$. Player cop $\mathcal C$ attempts to capture the robber $\mathcal R$.
At the beginning of the game, $\mathcal C$ chooses a vertex of $G$,
then $\mathcal R$ chooses another vertex.
Thereafter, the two sides
move alternatively, starting with $\mathcal C,$ where a move is to
slide along an edge of $G$ or to stay at the same vertex.
The objective of $\mathcal C$ is to capture $\mathcal R$,
i.e., to be at some moment in time at the same
vertex as the robber.  The objective of $\mathcal R$ is to continue
evading the cop.  A {\it cop-win graph}
\cite{AigFro,NowWin,Qui83} is a graph in which $\mathcal C$
captures $\mathcal R$  after a finite number of moves from any possible
initial positions of $\mathcal C$ and $\mathcal R.$

In this paper, we investigate a natural extension of the cop and robber game
in which the cop $\mathcal C$ and the robber $\mathcal R$ move at
speeds $s'\ge 1$ and $s\ge 1,$ respectively.  This game was introduced
and thoroughly investigated in \cite{ChChNiVa}.  It generalizes the cop and fast
robber game from \cite{FoGoKrNiSu} and can be viewed as the discrete
version of some pursuit-evasion games played in continuous
domains \cite{FoTi}.  The unique difference of this ``$(s,s')$-cop and
robber game'' and the classical cop and robber game is that at each step,
$\mathcal C$ can move along a path of length at most $s'$ and  $\mathcal R$ can
move along a path of length at most $s$ not traversing the position
occupied by the cop. Following \cite{ChChNiVa}, we will denote the class of cop-win
graphs for this game by ${\mathcal C}{\mathcal W}{\mathcal F}{\mathcal R}(s,s')$.

Analogously to the characterization of classical cop-win graphs given in
\cite{NowWin,Qui83}, the $(s,s')$-cop-win graphs have been characterized
in \cite{ChChNiVa} via a special dismantling scheme. It was also shown in \cite{ChChNiVa} that any
$\delta$-hyperbolic graph in the sense of Gromov \cite{Gr} belongs to the
class ${\mathcal C}{\mathcal W}{\mathcal F}{\mathcal R}(2r,r+2\delta)$ for any $r>0$  and
that, for any $s\geq 2s'$, the graphs in ${\mathcal C}{\mathcal W}{\mathcal F}{\mathcal R}(s,s')$ are
$(s-1)$-hyperbolic. Finally, \cite{ChChNiVa} conjectures that all graphs
of ${\mathcal C}{\mathcal W}{\mathcal F}{\mathcal R}(s,s')$ with $s'<s,$ are $\delta$-hyperbolic,
where $\delta$ depends only of $s$ and establishes this conjecture for  Helly graphs and bridged
graphs, two important classes of weakly modular graphs.

In this note, we confirm the conjecture of \cite{ChChNiVa} by showing
that if $s'<s,$ then any graph of ${\mathcal C}{\mathcal W}{\mathcal
  F}{\mathcal R}(s,s')$ is $\delta$-hyperbolic with $\delta=O(s^2).$
The proof uses the dismantling characterization of $(s,s')$-cop-win
graphs and the characterization of $\delta$-hyperbolicity via the
linear isoperimetric inequality. We show that the dependency between
$\delta$ and $s$ is linear if $s-s'=\Omega(s)$ and $G$ satisfies a
slightly stronger dismantling condition. We also show that weakly
modular graphs from ${\mathcal C}{\mathcal W}{\mathcal F}{\mathcal
  R}(s,s')$ with $s'<s$ are $184s$-hyperbolic.  All this allows us to
approximate within a constant factor the least value of $\delta$ for
which a finite graph $G=(V,E)$ is $\delta$-hyperbolic in
$O(|V|^2)$ time once the distance-matrix of $G$ has been
computed.

\section{Preliminaries}

\subsection{Graphs}
All graphs $G=(V,E)$ occurring in this paper are undirected,
connected, without loops or multiple edges, but not
necessarily finite or locally-finite.  For a subset $A\subseteq V,$ the subgraph of $G=(V,E)$  {\it induced by} $A$
is the graph $G(A)=(A,E')$ such that $uv\in E'$ if and only if $u,v\in A$ and $uv\in E$. We
will write $G-\{ x\}$ instead of $G(V\setminus\{ x\})$.
The {\it distance} $d(u,v):=d_G(u,v)$
between two vertices $u$ and $v$ of  $G$ is the length (number of
edges) of a $(u,v)$-{\it geodesic}, i.e., a shortest $(u,v)$-path. For a vertex $v$ of $G$ and an
integer $r\ge 1$, we will denote  by $B_r(v,G)$ the \emph{ball} in $G$
of radius $r$ centered at  $v$, i.e.,
$B_r(v,G)=\{ x\in V: d(v,x)\le r\}.$ (We will write $B_r(v)$ instead of
$B_r(v,G)$ when this is clear from the context).
Let  $B_r(x,G - \{ y\})$ be the ball of radius $r$ centered
at $x$ in the graph $G -\{ y\}.$ The {\it interval}
$I(u,v)$ between $u$ and $v$ consists of all vertices on
$(u,v)$-geodesics, that is, of all vertices (metrically) {\it between} $u$
and $v$: $I(u,v)=\{ x\in V: d(u,x)+d(x,v)=d(u,v)\}.$

Three vertices $v_1,v_2,v_3$ of a graph $G$ form a {\it metric triangle} $v_1v_2v_3$ if the intervals $I(v_1,v_2), I(v_2,v_3),$ and
$I(v_3,v_1)$ pairwise intersect only in the common end-vertices. If
$d(v_1,v_2)=d(v_2,v_3)=d(v_3,v_1)=k,$ then this metric triangle is
called {\it equilateral} of {\it size} $k.$ A metric triangle
$v_1v_2v_3$ of $G$ is a {\it quasi--median} of the triplet $x,y,z$
if the following metric equalities are satisfied:
$$\begin{array}{l}
d(x,y)=d(x,v_1)+d(v_1,v_2)+d(v_2,y),\\
d(y,z)=d(y,v_2)+d(v_2,v_3)+d(v_3,z),\\
d(z,x)=d(z,v_3)+d(v_3,v_1)+d(v_1,x).\\
\end{array}$$
Every triplet $x,y,z$ of a graph has at least one quasi-median:
first select any vertex $v_1$ from $I(x,y)\cap I(x,z)$ at maximal
distance to $x,$ then select a vertex $v_2$ from $I(y,v_1)\cap
I(y,z)$ at maximal distance to $y,$ and finally select any vertex
$v_3$ from $I(z,v_1)\cap I(z,v_2)$ at maximal distance to $z.$


\subsection{$\delta$-Hyperbolicity}


A metric space $(X,d)$ is $\delta$-{\it hyperbolic}
\cite{AlBrCoFeLuMiShSh,BrHa,Gr} if for any four points $u,v,x,y$ of
$X$, the two larger of the three distance sums $d(u,v)+d(x,y)$,
$d(u,x)+d(v,y)$, $d(u,y)+d(v,x)$ differ by at most $2\delta \geq 0$. A
graph $G = (V,E)$ is $\delta$-\emph{hyperbolic} if $(V,d_G)$ is
$\delta$-{hyperbolic}.  In case of geodesic metric spaces and graphs,
$\delta$-hyperbolicity can be defined in several other equivalent
ways. Here we recall some of them, which we will use in our proofs.

Let $(X,d)$ be a metric space.  A {\it geodesic segment} joining two
points $x$ and $y$ from $X$ is a map $\rho$ from the segment $[a,b]$
of ${\mathbb R}^1$ of length $|a-b|=d(x,y)$ to $X$ such that
$\rho(a)=x, \rho(b)=y,$ and $d(\rho(s),\rho(t))=|s-t|$ for all $s,t\in
[a,b].$ A metric space $(X,d)$ is {\it geodesic} if every pair of
points in $X$ can be joined by a geodesic segment. Every (combinatorial)
graph $G=(V,E)$ equipped with its standard
distance $d:=d_G$ can be transformed into a geodesic (network-like)
space $(X_G,d)$ by replacing every edge $e=(u,v)$ by a segment
$\gamma_{uv}=[u,v]$ of length 1; the segments may intersect only at
common ends.  Then $(V,d_G)$ is isometrically embedded in a natural
way in $(X_G,d)$. $X_G$ is often called a {\it metric graph}.  The
restrictions of geodesics of $X_G$ to the set of vertices $V$ of $G$ 
are the shortest paths of $G$.
For simplicity of notation and brevity (and if not said otherwise), in all
subsequent results, by a geodesic $[x,y]$ in a graph $G$ we will mean an arbitrary shortest
path between two vertices $x,y$ of $G$.

Let $(X,d)$ be a geodesic space.  A \textit{geodesic triangle}
$\Delta(x,y,z)$ with $x, y, z \in X$ is the union $[x,y] \cup [x,z]
\cup [y,z]$ of three geodesic segments connecting these vertices.  A
geodesic triangle $\Delta(x,y,z)$ is called $\delta$-{\it slim} if for
any point $u$ on the side $[x,y]$ the distance from $u$ to $[x,z]\cup
[z,y]$ is at most $\delta$. For graphs, we ``discretize'' this notion in the following way.
We say that the geodesic triangles of a
graph $G$ are $\delta$-\emph{slim} if for any triplet $x,y,z$ of vertices of $G$, for
any (graph) geodesics  $[x,y], [x,z], [y,z],$ and for any vertex $u\in [x,y]$,
there exists $v\in [x,z] \cup [y,z]$ such that $d(u,v)\leq \delta$.

Note that if the metric graph $(X_G,d)$ is $\delta$-hyperbolic (resp.,
has $\delta$-slim geodesic triangles) as a geodesic metric space, then
the combinatorial graph $G$ is $\delta$-hyperbolic (resp., has
$\delta$-slim geodesic triangles). Conversely, if $G$ is $\delta$-hyperbolic
(resp., has $\delta$-slim geodesic triangles), then $(X_G,d)$ is
$(\delta+2)$-hyperbolic (resp., has $(\delta + \frac{1}{2})$-slim geodesic
triangles).


The following result shows that hyperbolicity of a geodesic space is
equivalent to having slim geodesic triangles (the same result holds
for graphs).

\begin{proposition} \label{hyp_charact}  \label{prop-relations-delta}\cite{AlBrCoFeLuMiShSh,BrHa,Gr}
If all geodesic triangles of a geodesic metric space (X,d) are $\delta$-slim, then $X$ is
$8\delta$-hyperbolic. Conversely, if a geodesic space $(X,d)$ is
$\delta$-hyperbolic, then all its geodesic triangles are $3\delta$-slim.
\end{proposition}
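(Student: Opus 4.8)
The plan is to reformulate $\delta$-hyperbolicity through the \emph{Gromov product}: for a basepoint $w$ put $(x|y)_w:=\tfrac12\big(d(x,w)+d(y,w)-d(x,y)\big)$. Expressing the three distance sums of a quadruple $x,y,z,w$ via the products based at $w$, a direct computation shows that each of the three sums $d(x,y)+d(z,w)$, $d(x,z)+d(y,w)$, $d(x,w)+d(y,z)$ equals the common quantity $d(x,w)+d(y,w)+d(z,w)$ minus twice the product $(x|y)_w$, $(x|z)_w$, $(y|z)_w$ respectively. Hence ``the two larger sums differ by at most $2\delta$'' is literally the same as the inequality $(x|y)_w\ge\min\{(x|z)_w,(y|z)_w\}-\delta$ holding for all $x,y,z,w$, so the paper's notion of $\delta$-hyperbolicity coincides with this product inequality \emph{with the same} $\delta$. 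I will freely pass between the two forms.

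First I would treat the implication from hyperbolicity to slimness, the more elementary one. Fix a geodesic triangle $x,y,z$, a point $p\in[x,y]$, and set $t=d(x,p)$ and $\alpha=(y|z)_x$. Since $p$ lies on $[x,y]$ we have $(p|y)_x=t$, and $(z|q)_x=d(x,q)$ for any $q\in[x,z]$. If $t\le\alpha$, choose the comparison point $q\in[x,z]$ with $d(x,q)=t$; chaining the product inequality through $y$ and $z$ gives $(p|q)_x\ge\min\{(p|y)_x,(y|z)_x,(z|q)_x\}-2\delta=t-2\delta$, and since $d(p,q)=2t-2(p|q)_x$ this bounds $d(p,q)$ by a small multiple of $\delta$. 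If $t>\alpha$ then $d(y,p)=d(x,y)-t<(x|z)_y$, and the symmetric argument based at $y$ produces a nearby point of $[y,z]$. A careful choice of the comparison point, as in the cited references, sharpens the bound to the stated $3\delta$, showing the triangle is $3\delta$-slim.

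The harder implication, from $\delta$-slimness to $8\delta$-hyperbolicity, is where I expect the main difficulty. The first step is a quantitative comparison lemma: in any geodesic space $(x|y)_z\le d(z,[x,y])$ by the triangle inequality applied to a nearest point, and slimness supplies the reverse estimate $d(z,[x,y])\le (x|y)_z+O(\delta)$ by sending the internal point of the triangle (the point of $[x,y]$ at distance $\alpha=(y|z)_x$ from $x$) to within $\delta$ of the other two sides. Thus $(x|y)_z$ and $d(z,[x,y])$ agree up to $O(\delta)$. The second step verifies the product inequality for an arbitrary quadruple: I would form a geodesic quadrilateral on the four points, cut it by a diagonal into two triangles, and apply $\delta$-slimness to each, so that a point on one side lies within $2\delta$ of the union of the remaining three sides. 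Feeding this ``$2\delta$-slim quadrilateral'' estimate into the comparison lemma yields $(x|y)_w\ge\min\{(x|z)_w,(y|z)_w\}-8\delta$, i.e.\ $8\delta$-hyperbolicity.

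The delicate point is precisely the bookkeeping in this last step: slimness constrains a single side of a single triangle, whereas the four-point condition is global, so one must propagate the $\delta$-error across the diagonal decomposition and through the comparison lemma, and it is the several independent applications of slimness that inflate the constant to $8$. Ensuring these errors accumulate to no more than $8\delta$, rather than a larger multiple, is the main obstacle; the two elementary facts above (the product-inequality reformulation and the one-sided comparison $(x|y)_z\le d(z,[x,y])$) are what keep the accounting under control.
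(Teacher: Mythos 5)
The paper itself gives no proof of this proposition: it is quoted from the cited references, so there is no in-paper argument to compare routes with, and your attempt can only be judged against the standard proofs, whose route you are in fact following (Gromov-product reformulation, comparison of $(x|y)_z$ with $d(z,[x,y])$, slimness applied across a triangle/quadrilateral decomposition). Your reformulation is correct: each sum equals $d(x,w)+d(y,w)+d(z,w)$ minus twice the corresponding product, so the paper's four-point condition with constant $\delta$ is exactly $(x|y)_w\ge\min\{(x|z)_w,(y|z)_w\}-\delta$. Your slim $\Rightarrow$ $8\delta$-hyperbolic outline is sound, and the accounting you worry about is not actually delicate: the two estimates $(x|y)_w\le d(w,[x,y])$ and (under $\delta$-slimness) $d(w,[x,y])\le (x|y)_w+2\delta$, together with one application of slimness to the triangle $xyz$ at a point $p\in[x,y]$ nearest to $w$ (giving $q\in[x,z]\cup[y,z]$ with $d(p,q)\le\delta$), already yield $\min\{(x|z)_w,(y|z)_w\}\le d(w,q)\le d(w,[x,y])+\delta\le (x|y)_w+3\delta$, i.e.\ $3\delta$-hyperbolicity, comfortably inside the claimed $8\delta$ and consistent with Soto's sharp $2\delta$ bound quoted as the next proposition in the paper.

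The genuine gap is in the converse direction. As written, your chaining based at $x$ proves only $4\delta$-slimness, not $3\delta$: with $t\le\alpha$ you get $(p|q)_x\ge t-2\delta$, hence $d(p,q)=2t-2(p|q)_x\le 4\delta$, and no choice of $d(x,q)$ within that scheme does better, because each pass through the product inequality costs a full $\delta$ and the conversion $d(p,q)=t+d(x,q)-2(p|q)_x$ doubles the loss. The sentence deferring the sharpening to ``the cited references'' is exactly the missing step, and it requires restructuring rather than fine-tuning: base the products at $p$ itself. Since $p\in[x,y]$ one has $(x|y)_p=0$, so hyperbolicity forces $\min\{(x|z)_p,(y|z)_p\}\le\delta$; then the nearest-point estimate $d(p,[x,z])\le (x|z)_p+2\delta$ (one application of the product inequality, taking $q\in[x,z]$ with $d(x,q)=(z|p)_x$, for which $(z|q)_x=(z|p)_x$ and hence $(p|q)_x\ge (p|z)_x-\delta$) gives $d(p,[x,z]\cup[y,z])\le 3\delta$. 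With that substitution your proof is complete; without it, it establishes the proposition only with $3\delta$ replaced by $4\delta$.
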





More recently, Soto~\cite{Soto-PhD} proved a sharp bound on the
hyperbolicity of metric spaces and graphs with $\delta$-slim geodesic
triangles.

\begin{proposition}\label{prop-soto}\cite{Soto-PhD}
If all geodesic triangles of a geodesic metric space (X,d) are
$\delta$-slim, then $X$ is $2\delta$-hyperbolic.
If all geodesic triangles of a graph $G$ are $\delta$-slim, then $G$
is $(2\delta + \frac{1}{2})$-hyperbolic.
\end{proposition}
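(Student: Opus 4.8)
The plan is to prove the first statement, for geodesic metric spaces, and then recover the graph statement by a discretization argument. Throughout I write $(x\mid y)_w=\frac12(d(x,w)+d(y,w)-d(x,y))$ for the Gromov product of $x,y$ with respect to the basepoint $w$. A direct computation shows that the four-point condition defining $\delta'$-hyperbolicity (the two larger of the three sums differ by at most $2\delta'$) is equivalent to the inequality
\[
(x\mid y)_w\ \ge\ \min\{(x\mid z)_w,\,(y\mid z)_w\}-\delta'\qquad\text{for all }x,y,z,w\in X.
\]
Hence it suffices to establish this four-point inequality with $\delta'=2\delta$, and the whole argument reduces to comparing Gromov products of triples of points.

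The key geometric input, and the place where $\delta$-slimness enters, is the estimate
\[
(x\mid y)_w\ \le\ d\bigl(w,[x,y]\bigr)\ \le\ (x\mid y)_w+2\delta,
\]
valid for any geodesic $[x,y]$. The left inequality is elementary and holds in every geodesic space: for $p\in[x,y]$ one has $2\,d(w,p)\ge d(w,x)+d(w,y)-d(x,y)=2(x\mid y)_w$. For the right inequality I would use the internal point $p$ of the triangle $\Delta(w,x,y)$ lying on $[w,x]$, which is exactly the point at distance $(x\mid y)_w$ from $w$; walking along $[w,x]$ from $w$ to $x$ and using that $\delta$-slimness forces every point of $[w,x]$ to be within $\delta$ of $[w,y]\cup[x,y]$, an intermediate-value (connectedness) argument produces a point that is simultaneously $\delta$-close to both remaining sides, and a short distance bookkeeping then places $p$ within $2\delta$ of $[x,y]$. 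This yields $d(w,[x,y])\le d(w,p)+2\delta=(x\mid y)_w+2\delta$.

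With the estimate in hand I would combine instances of it around the triple $x,y,z$ with apex $w$: choosing a nearest point $m$ of $w$ on $[x,y]$ and applying $\delta$-slimness to $\Delta(x,y,z)$ sends $m$ within $\delta$ of $[x,z]$ or of $[y,z]$, whence $\min\{d(w,[x,z]),d(w,[y,z])\}\le d(w,[x,y])+\delta$; feeding this through the two-sided estimate converts distances-to-sides back into Gromov products and gives the four-point inequality. The main obstacle is entirely quantitative: this naive chaining of three $\delta$'s produces a constant strictly larger than $2\delta$ (indeed $3\delta$), so obtaining Soto's sharp $2\delta$ requires running the intermediate-value step and the distance bookkeeping tightly --- tracking exactly where each $\delta$ is spent, and exploiting that the internal point realizes the Gromov product with no loss --- rather than through the coarse triangle inequalities used above.

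Finally, for the graph statement I would repeat the argument inside the combinatorial metric, where geodesics are shortest vertex-paths and $\delta$-slimness is the discrete version stated in the excerpt. The only new feature is arithmetic: vertex distances are integers, so the Gromov products $(x\mid y)_w$ are half-integers and the ``internal point'' at distance $(x\mid y)_w$ from $w$ may fall between two vertices. Rounding to the nearest vertex in the discrete intermediate-value step costs at most $\tfrac12$ beyond the metric estimate, and this loss propagates to the hyperbolicity constant, yielding the stated bound $2\delta+\tfrac12$.
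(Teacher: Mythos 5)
The paper does not prove this proposition at all --- it is imported verbatim from Soto's thesis \cite{Soto-PhD} --- so your attempt has to stand on its own, and it does not. The chain you actually carry out (the two-sided estimate $(x\mid y)_w\le d\bigl(w,[x,y]\bigr)\le (x\mid y)_w+2\delta$, followed by one application of slimness at a nearest-point projection of $w$ onto $[x,y]$) yields the four-point inequality only with constant $3\delta$, as you yourself concede in your penultimate paragraph. That concession is precisely the gap: the entire content of the proposition is the sharp constant $2\delta$ (weaker constants, such as your $3\delta$ or the $8\delta$ of Proposition~\ref{hyp_charact}, are classical), and the closing promise that ``tight bookkeeping'' in the same chain recovers $2\delta$ is not substantiated by any argument. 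Nothing in your sketch indicates how to remove either loss: the $2\delta$ spent converting $d(w,[x,y])$ back into a Gromov product and the $\delta$ spent on the slimness step occur at independent places in the chain. The same defect propagates to the graph statement, where your rounding remark sits on top of the $3\delta$ skeleton and therefore cannot produce $2\delta+\frac12$.

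What is missing is a structurally different use of slimness, applied to the \emph{two} triangles sharing the diagonal $[x,y]$ rather than to distances from the single basepoint $w$. Concretely: let $A=d(x,y)+d(z,w)$ be the largest of the three sums and fix geodesics on all six pairs. Run your connectedness argument twice, once in $\Delta(x,y,z)$ and once in $\Delta(x,y,w)$: it gives $p\in[x,y]$ within $\delta$ of both $[x,z]$ and $[y,z]$, and $q\in[x,y]$ within $\delta$ of both $[x,w]$ and $[y,w]$. If $p$ lies between $x$ and $q$, use the point of $[x,w]$ at distance at most $\delta$ from $q$ and the point of $[y,z]$ at distance at most $\delta$ from $p$ to get $d(x,w)\ge d(x,q)+d(q,w)-2\delta$ and $d(y,z)\ge d(y,p)+d(p,z)-2\delta$; since $d(x,q)+d(y,p)=d(x,y)+d(p,q)$ and $d(q,w)+d(p,z)\ge d(z,w)-d(p,q)$, summing gives $d(x,w)+d(y,z)\ge A-4\delta$. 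If instead $q$ lies between $x$ and $p$, the symmetric computation gives $d(x,z)+d(y,w)\ge A-4\delta$. Either way the two largest sums differ by at most $4\delta$, i.e.\ $X$ is $2\delta$-hyperbolic. The feature your approach cannot capture is that \emph{which} cross-sum gets bounded depends on the relative position of $p$ and $q$ on $[x,y]$, so both triangles must be kept in play simultaneously; collapsing everything to distances from one basepoint is exactly where the irrecoverable $\delta$ is lost. In the graph case the connectedness step degenerates to a pair of adjacent transition vertices, and that single extra edge is the source of the $+\frac12$.
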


An interval $I(u,v)$ of a graph $G$ is called $\nu$-{\it thin}, if $d(x,y)\le \nu$ for any two vertices $x,y\in I(u,v)$ such that $d(u,x)=d(u,y)$ and $d(v,x)=d(v,y).$
From the definition of $\delta$-hyperbolicity easily follows that intervals of a $\delta$-hyperbolic graph are $2\delta$-thin.

We note that a converse of this result holds too. If $G$ is a graph, denote by $G'$ the graph obtained by subdividing all edges of $G$. Papasoglu \cite{Pa} showed  that if $G'$ has $\nu$-thin intervals then $G$ is $f(\nu)$-hyperbolic for some function $f$. It is not clear what is the best possible $f$ for which this holds. Chatterji and Niblo in \cite{Cha-Ni} showed that $f$ can be taken to be a double exponential function. It would be interesting to have examples showing what is the dependence between $\delta,\nu$, e.g. whether it should be possible to show
that $f$ grows faster than linearly.

However, the following result holds:

\begin{proposition} \cite{ChDrEsHaVa} \label{mu-nu} If $G$ is a graph in which all intervals are $\nu$-thin and the metric triangles of $G$ have
sides of length at most $\mu,$ then  $G$ is $(16\nu+4\mu)$-hyperbolic.
\end{proposition}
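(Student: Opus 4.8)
The plan is to show that all geodesic triangles of $G$ are $(2\nu+\tfrac{\mu}{2})$-slim and then invoke Proposition~\ref{hyp_charact}, which upgrades $\delta'$-slimness to $8\delta'$-hyperbolicity; with $\delta'=2\nu+\tfrac{\mu}{2}$ this produces exactly the claimed bound $16\nu+4\mu$. Fix vertices $x,y,z$ together with arbitrary geodesics $[x,y],[y,z],[z,x]$, and let $v_1v_2v_3$ be a quasi-median of the triplet $x,y,z$, which exists by the construction recalled in the preliminaries. Since a quasi-median is in particular a metric triangle, the hypothesis on metric triangles gives $d(v_1,v_2),d(v_2,v_3),d(v_3,v_1)\le\mu$. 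The defining equalities of a quasi-median guarantee that $[x,v_1]\cup[v_1,v_2]\cup[v_2,y]$ is a geodesic from $x$ to $y$ through $v_1,v_2$, and similarly for the other two sides; these ``canonical'' geodesics serve as a scaffold. The elementary fact I would use repeatedly is that, since $I(x,y)$ is $\nu$-thin, any two points lying in $I(x,y)$ at equal distance from $x$ are automatically at equal distance from $y$ and hence within $\nu$ of one another; the same applies to $I(y,z)$ and $I(z,x)$.

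First I would estimate the two legs. Let $u\in[x,y]$ with $t:=d(x,u)\le d(x,v_1)$, and let $p$ be the point of the canonical segment $[x,v_1]$ at distance $t$ from $x$. Because $v_1\in I(x,y)\cap I(x,z)$, one checks $I(x,v_1)\subseteq I(x,y)\cap I(x,z)$, so $p$ lies in both intervals. Thinness of $I(x,y)$ applied to $u,p$ gives $d(u,p)\le\nu$, and thinness of $I(x,z)$ applied to $p$ and the point $u'$ of $[x,z]$ at distance $t$ from $x$ gives $d(p,u')\le\nu$; hence $d(u,u')\le 2\nu$ with $u'\in[x,z]$. The symmetric argument, based at $y$ and using $v_2\in I(y,x)\cap I(y,z)$, shows that every $u\in[x,y]$ with $d(x,u)\ge d(x,v_1)+d(v_1,v_2)$ is within $2\nu$ of $[y,z]$.

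It remains to treat the middle range $d(x,v_1)\le t\le d(x,v_1)+d(v_1,v_2)$. Here I let $p$ be the point of the canonical segment $[v_1,v_2]$ at distance $t$ from $x$; thinness of $I(x,y)$ again gives $d(u,p)\le\nu$. Since $[v_1,v_2]$ has length $d(v_1,v_2)\le\mu$, the point $p$ is within $\tfrac{\mu}{2}$ of one endpoint, say $v_1$; as $v_1\in I(x,z)$, thinness of $I(x,z)$ places $v_1$ within $\nu$ of the point $q$ of $[x,z]$ at distance $d(x,v_1)$ from $x$. Combining the three estimates yields $d(u,q)\le\nu+\tfrac{\mu}{2}+\nu=2\nu+\tfrac{\mu}{2}$ with $q\in[x,z]$ (if the nearer endpoint is $v_2$ one argues symmetrically against $[y,z]$ via $v_2\in I(y,z)$). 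Thus every geodesic triangle is $(2\nu+\tfrac{\mu}{2})$-slim, and Proposition~\ref{hyp_charact} gives $(16\nu+4\mu)$-hyperbolicity. The main obstacle, and the only delicate point, is precisely these transfer steps: the prescribed geodesics need not pass through the quasi-median vertices $v_i$, so thinness of the full intervals $I(x,y),I(y,z),I(z,x)$ must be used to move between an arbitrary geodesic, the canonical scaffold, and the opposite side—each move costing $\nu$, with two such moves accounting for the factor $2\nu$ on the legs.
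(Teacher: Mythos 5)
Proposition~\ref{mu-nu} is stated in the paper as an imported result, cited from \cite{ChDrEsHaVa}, and no proof of it appears in the paper itself; so there is no in-paper argument to compare yours against, and your proposal must be judged on its own. It is correct. The key steps all check out: a quasi-median $v_1v_2v_3$ of $x,y,z$ is a metric triangle, so its sides are at most $\mu$; the inclusion $I(x,v_1)\subseteq I(x,y)\cap I(x,z)$ follows from the standard concatenation argument; in each of your three ranges for $t=d(x,u)$ the required companion points exist (e.g.\ $t\le d(x,v_1)\le d(x,z)$ guarantees the point of $[x,z]$ at distance $t$ from $x$ exists), and your observation that two vertices of an interval equidistant from one endpoint are automatically equidistant from the other makes each application of $\nu$-thinness legitimate. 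This yields $(2\nu+\tfrac{\mu}{2})$-slimness of all geodesic triangles, and the conversion of Proposition~\ref{hyp_charact} gives exactly $8(2\nu+\tfrac{\mu}{2})=16\nu+4\mu$, matching the stated constant --- which strongly suggests this is also how the constant in \cite{ChDrEsHaVa} arises. Incidentally, your argument actually proves something sharper than the quoted statement: feeding the same slimness bound into Soto's Proposition~\ref{prop-soto} would give $(4\nu+\mu+\tfrac{1}{2})$-hyperbolicity, though of course the proposition as stated only requires the weaker bound.
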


Now, we recall the definition of hyperbolicity via the linear
isoperimetric inequality. Although this (combinatorial) definition of
hyperbolicity is given for geodesic metric spaces, it is quite common
to approximate the metric space by a graph via a quasi-isometric
embedding and to define $N$-fillings for the resulting graph (see for
example, \cite[pp. 414--417]{BrHa}). Since in this paper we deal only with graphs,
we directly give the definitions in the setting of graphs.

In a graph $G = (V,E)$, a \emph{loop} $c$ is a sequence of vertices
$(v_0,v_1,v_2,\ldots,v_{n-2},v_{n-1},v_0)$ such that for each $0 \leq
i \leq n-1$, either $v_i = v_{i+1}$, or $v_iv_{i+1} \in E$; $n$ is
called the \emph{length} $\ell(c)$ of $c$. A \emph{simple cycle} $c =
(v_0,v_1,v_2,\ldots,v_{n-2},v_{n-1},v_0)$ is a loop such that for all
$0 \leq i < j \leq n-1$, $v_i \neq v_j$.

A \emph{non-expansive map} $\Phi$ from a graph $G=(V,E)$ to a graph
$G'=(V',E')$ is a function $\Phi\colon V\to V'$ such that for all $v, w
\in V$, if $vw \in E$ then either $\Phi(v)=\Phi(w)$ or $\Phi(v)\Phi(w)
\in E'$. Note that a map $\Phi$ from $G$ to $G'$ is non-expansive if
and only if for all vertices $v,w$ of $G$, $d_{G'}(\Phi(v),\Phi(w))
\leq d_G(v,w)$.

For an integer $N>0$ and a loop $c = (v_0,v_1,v_2,\ldots, v_{n-2},
v_{n-1}, v_0)$ in a graph $G$, an \emph{$N$-filling} $(D,\Phi)$ of $c$
consists of a 2-connected planar graph $D$ and a non-expansive map
$\Phi$ from $D$ to $G$ such that the following conditions hold (see Figure~\ref{fig-N-filling} for an example):
\begin{enumerate}
\item the external face of $D$ is a simple cycle $(v'_0, v'_1,
  \ldots,v'_{n-1},v'_0)$ such that $\Phi(v_i') = v_i$ for all $0 \leq
  i \leq n-1$,
\item every internal face of $D$ has at most $2N$ edges.
\end{enumerate}

\begin{figure}[h]
\centering\includegraphics[scale=0.7]{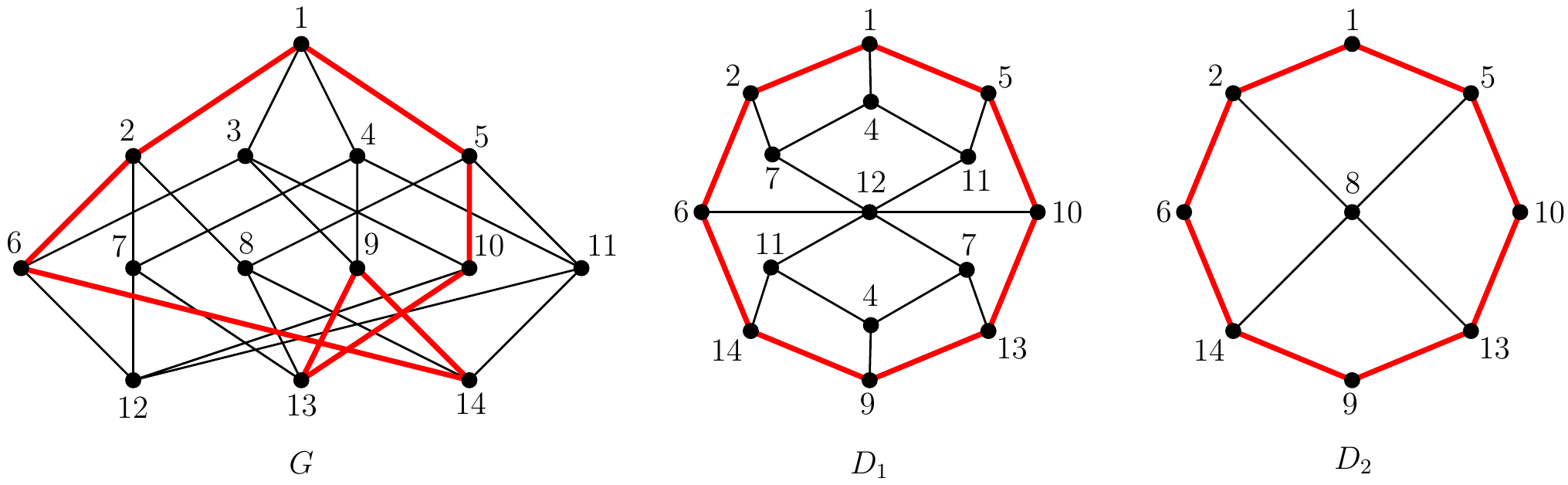}
\caption{Two different $2$-fillings $D_1$ and $D_2$ of the loop $c =
  (1, 2, 6, 14, 9, 13, 10, 5, 1)$ of $G$. $D_2$ is a $2$-filling of
  $c$ with a minimum number of faces and thus Area$_2(c) = 4$.}
\label{fig-N-filling}
\end{figure}

The $N$-{\it area} Area$_N(c)$ of $c$ is the minimum number of faces
in an $N$-filling of $c$.  A graph $G$ satisfies a {\it linear
  isoperimetric inequality} if there exists an $N>0$ such that any
loop $c$ of $G$ has an $N$-filling and Area$_N(c)$ is linear in the
length of $c$ (i.e., there exists a positive integer $K$ such that
Area$_N(c)\le K\cdot \ell(c)$).  The following result of
Gromov~\cite{Gr} proven in~\cite{AlBrCoFeLuMiShSh,Bowd,BrHa,Olsh}  is
the basic ingredient of our proof:

\begin{theorem} [Gromov] \label{isoperimetric} If a graph $G$ is $\delta$-hyperbolic, then any edge-loop of $G$ admits a $16\delta$-filling of
linear area. Conversely, if a graph $G$ satisfies the linear
isoperimetric inequality Area$_N(c)\le K\cdot \ell(c)$ for some
integers $N$ and $K$, then $G$ is $\delta$-hyperbolic, where
$\delta\le 108K^2N^3+9KN^2$.
\end{theorem}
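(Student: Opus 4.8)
The plan is to prove the two implications separately, spending most of the effort on the quantitative converse, since the forward direction only asks for a filling of \emph{linear} area with no control on the multiplicative constant (which may therefore depend on $\delta$).

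For the forward direction I would invoke Proposition~\ref{hyp_charact}, so that a $\delta$-hyperbolic graph has $3\delta$-slim geodesic triangles and I may use slimness freely. Given an edge-loop $c=(v_0,v_1,\ldots,v_{n-1},v_0)$, I would cone it off from the basepoint $v_0$: choose geodesics $\sigma_i=[v_0,v_i]$, made consistent so that consecutive geodesics $\sigma_i,\sigma_{i+1}$ share a common initial segment and split only once (a geodesic fan rooted at $v_0$). Applying slimness to the degenerate triangle with sides $\sigma_i,\sigma_{i+1}$ and the single edge $v_iv_{i+1}$, the two suffixes along which $\sigma_i$ and $\sigma_{i+1}$ differ fellow-travel within $O(\delta)$; inserting ``rungs'' of length $O(\delta)$ at each level produces a ladder of faces, each of which can be arranged to have at most $16\delta\le 2N$ edges, matching the $16\delta$-filling requirement. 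The point that makes the total area linear is that the geodesic fan is tree-like in a hyperbolic graph, so the total length of the differing suffixes telescopes to $O(\ell(c))$; hence the number of faces is $O(\ell(c))$.

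For the converse, the natural route is to verify the two hypotheses of Proposition~\ref{mu-nu} and then read off the constant. First I would bound the size of metric triangles: given a metric triangle whose sides pairwise meet only at their endpoints, I would fill the triangular loop (of length $O(\mu)$) and argue, by counting faces on the disk diagram, that a large triangle forces a hole that cannot be filled with linearly many faces of bounded size, yielding a bound $\mu=O(KN^2)$. Second I would bound the thinness of intervals: for two geodesics between the same pair $u,v$ and points $x,y$ on them equidistant from $u$ and from $v$, I would fill the bigon they bound and show that if $d(x,y)$ were large, then a nested family of ``annuli'' of faces would be forced, making the area grow superlinearly and contradicting the linear bound $\mathrm{Area}_N(c)\le K\ell(c)$; this gives $\nu$-thin intervals with $\nu=O(K^2N^3)$. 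Feeding $\nu=O(K^2N^3)$ and $\mu=O(KN^2)$ into Proposition~\ref{mu-nu} produces a $(16\nu+4\mu)$-hyperbolicity bound of exactly the claimed shape $108K^2N^3+9KN^2$.

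The main obstacle is the quantitative face-counting in the converse, and specifically the interval-thinness step: turning ``a point on one geodesic is far from the competing geodesic'' into a genuinely \emph{superlinear} lower bound on the area of a minimal filling. The delicate bookkeeping is that the non-expansiveness of $\Phi$ must be combined with the bound of $2N$ edges per face so that the number of faces separating the far point from the boundary grows quadratically in the defect $d(x,y)$; this is precisely what produces the cubic factor $K^2N^3$ rather than a merely linear dependence, and it is the step where the constants have to be tracked with care.
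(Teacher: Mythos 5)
Both halves of your sketch stall exactly at their crux, so this is not yet a proof. Note first that the paper itself does not prove Theorem~\ref{isoperimetric} (it is quoted from Gromov, with proofs in the cited references); but the paper's own machinery reproves both halves in sharpened form, and comparing against it makes the gaps visible. For the forward direction, your coning construction does not yield \emph{linear} area as written. If you insert rungs at every level of the ladder between consecutive geodesics $\sigma_i,\sigma_{i+1}$, ladder $i$ contributes roughly $d(v_0,v_i)$ faces, so the total is $\sum_i d(v_0,v_i)$, which is quadratic in $\ell(c)$ in general (this is the classical ``combable implies quadratic Dehn function'' bound). Linearity requires precisely what you assume in passing: that consecutive geodesics literally coincide up to (roughly) the Gromov product $(v_i\,|\,v_{i+1})_{v_0}$ and split only once, so that the differing suffixes telescope to $\sum_i d(v_i,v_{i+1})=\ell(c)$. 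Mere $O(\delta)$-fellow-traveling is not enough: if the two geodesics are disjoint along the ``shared'' portion, every boundary edge of that region lies on some internal face of size at most $2N$, so that region alone costs $\Omega(\mathrm{length}/N)$ faces. Moreover, building such a coherent fan is itself problematic: the greedy choice (follow $\sigma_i$ to the Gromov product point, then run to $v_{i+1}$) produces only $(1,2\delta)$-almost-geodesics, and the error compounds around the loop. The standard proof --- the one the paper parallels in Lemma~\ref{shortenning} and Proposition~\ref{linear_area}, following Lemma 2.6 and Proposition 2.7 of \cite[Chapter III.H]{BrHa} --- avoids coning entirely: a long loop in a hyperbolic graph admits a chord giving a definite multiplicative shortcut; one cuts it off as a single face and inducts. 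Indeed, inside the paper the forward half follows from Corollary~\ref{cor-delta-implies-copwin} with $r=4\delta$ (giving $(8\delta,6\delta)^*$-dismantlability) combined with Proposition~\ref{linear_area}, which yields a $14\delta$-filling of area at most $\lceil \ell(c)/4\delta\rceil$.

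For the converse, routing through Proposition~\ref{mu-nu} is a legitimate idea, but the two quantitative claims it rests on are asserted rather than proved, and the missing ingredient is exactly the hard part. Any face-counting or coarea lower bound of the kind you invoke needs \emph{metric separation along the chosen cycle} --- this is the hypothesis $B_k(\{v_p,\ldots,v_q\},G)\cap c=\{v_{p-k},\ldots,v_{q+k}\}$ in the paper's Lemma~\ref{lem-lower-bound-faces} --- and neither of your cycles has it for free. For the metric-triangle step this is a genuine obstruction: interval-disjointness is a combinatorial condition, not a metric one; the sides of a large metric triangle may run within distance $2$ of one another along most of their length (only each corner is forced to be far from the opposite side), so the triangle's boundary cycle need not enclose anything visible to a naive face count. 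Likewise, in the thinness step the two geodesics from $u$ to $v$ may weave arbitrarily close together away from $x$ and $y$. In both cases one must first perform surgery to extract a sub-cycle on which separation genuinely holds; that is exactly the six-point construction $u,u',u'',w'',w',w$ and the two-case analysis of the paper's Proposition~\ref{prop-isoperimetric-implies-hyperbolic} (refining Theorem 2.9 of \cite[Chapter III.H]{BrHa}), i.e.\ the step your sketch explicitly defers. Finally, obtaining $108K^2N^3+9KN^2$ by solving $16\nu+4\mu$ for $\nu$ and $\mu$ is reverse-engineering, not a derivation; and your closing claim that the cubic factor $K^2N^3$ is forced by the face-counting is contradicted by the paper itself, whose Proposition~\ref{prop-isoperimetric-implies-hyperbolic} achieves a bound quadratic in $N$ (optimal by Remark~\ref{remark2}).
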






\subsection{Graphs of ${\mathcal C}{\mathcal W}{\mathcal F}{\mathcal R}(s,s')$ and $(s,s')^*$-dismantlability}

A (non-necessarily finite) graph $G=(V,E)$ is called $(s,s')$-{\it dismantlable}
if the vertex set of $G$ admits a well-order $\preceq$ such that
for each vertex $v$ of $G$ there exists another vertex $u$ with $u \preceq v$
such that $B_{s}(v,G -\{ u\})\cap X_v\subseteq
B_{s'}(u,G),$ where $X_v:=\{ w\in V: w\preceq v\}.$
In the following, if $B_{s}(v,G-\{ u\})\cap X_v\subseteq B_{s'}(u,G),$ then we will say that $v$ {\it
is eliminated} by $u$ or that $u$ {\it eliminates} $v$. From the definition immediately follows that
if  $G$ is  $(s,s')$-dismantlable, then $G$ is also $(s,s'')$-dismantlable for any $s''>s'$ (with the same dismantling order).
In case of finite graphs, the following result holds (if $s=s'=1,$ this is the classical characterization of
cop-win graphs by Nowakowski, Winkler \cite{NowWin} and
Quilliot \cite{Qui83}):

\begin{theorem} \cite{ChChNiVa} \label{copwin} For any $s,s'\in {\mathbb N}\cup \{ \infty\},$ $s'\leq s$,
a finite graph $G$ belongs to the class  ${\mathcal C}{\mathcal W}{\mathcal F}{\mathcal R}(s,s')$
if and only if $G$ is $(s,s')$-dismantlable.
\end{theorem}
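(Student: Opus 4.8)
The plan is to prove the two implications separately, adapting the Nowakowski--Winkler--Quilliot argument for $s=s'=1$: everywhere a closed neighborhood is used I would replace it by the cop-ball $B_{s'}(u,G)$ or the robber-ball $B_s(v,G-\{u\})$, and the set $X_v$ of not-yet-eliminated vertices would be used to record the region to which the robber has already been confined. One elementary fact is used repeatedly: if $u$ eliminates $v$, then since $d(v,v)=0\le s$, $v\neq u$ and $v\in X_v$, we get $v\in B_s(v,G-\{u\})\cap X_v\subseteq B_{s'}(u,G)$, so $d(u,v)\le s'$; that is, an eliminator is always within one cop-move of the vertex it kills.

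For the direction ``$(s,s')$-dismantlable implies cop-win'' I would first attempt an induction on $|V|$. Let $v$ be the $\preceq$-largest vertex, eliminated by $u$; since $X_v=V$ this gives $B_s(v,G-\{u\})\subseteq B_{s'}(u,G)$. Assuming $G'=G-\{v\}$ is again $(s,s')$-dismantlable for the induced order (a verification that deleting $v$ preserves the remaining ball-inclusions), $G'$ is cop-win by induction, with a winning strategy $\sigma'$, and I would let the cop play $\sigma'$ against the \emph{shadow} $\gamma(r)$ of the robber, where $\gamma$ fixes all vertices except $v$ and sends $v\mapsto u$. The endgame is clean: as soon as the cop reaches $\gamma(r)$, it has either captured the robber (if $r\neq v$) or sits at $u$ with the robber at $v$; in the latter case the robber's next move lands in $B_s(v,G-\{u\})\subseteq B_{s'}(u,G)$, and because the cop moves first and capture is tested immediately after the cop's move, the cop simply steps onto the robber.

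I expect the real obstacle to be legitimizing this shadow argument, and it is precisely here that a speed gap $s'\ge 2$ bites. The strategy $\sigma'$ only defeats a \emph{legal} robber of $G'$, so I must show that $(\gamma(r_i))_i$ is a legal robber walk in $G'$. This fails for the naive pushforward: a robber walk through $v$, say $\cdots a\,v\,b\cdots$, pushes to a detour through $u$ costing up to $2s'$ in place of $2$, so $\gamma$ is not distance-nonexpansive and strategies do not transfer verbatim once $s'\ge 2$. The remedy I would adopt is to replace ``simulate the shadow'' by a \emph{confinement} strategy driven by the whole order $v_1\prec\cdots\prec v_n$, which also sidesteps the question of whether $G'$ inherits dismantlability: the cop maintains that the robber is trapped inside $X_{v_j}$ and, by parking at the eliminator $u_j$ of the current top vertex $v_j$, forces $j$ to drop. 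The inclusion $B_s(v_j,G-\{u_j\})\cap X_{v_j}\subseteq B_{s'}(u_j,G)$ is exactly what guarantees that a robber sitting on $v_j$ is captured before it can escape upward, and the $X_{v_j}$-restriction is what keeps the relevant robber targets inside the already-dominated region. Proving that this invariant can be both installed and maintained --- i.e.\ that passing through eliminated vertices never buys the robber extra mobility --- is the technical heart of the proof.

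For the converse I would argue by contraposition, building the order greedily from the top. Starting from the alive set $W=V$, I repeatedly seek $v\in W$ and $u\in W\setminus\{v\}$ with $B_s(v,G-\{u\})\cap W\subseteq B_{s'}(u,G)$, declare $v$ the current maximum, and delete it from $W$. If $G$ is not $(s,s')$-dismantlable this stalls at some $W$ with $|W|\ge 2$ in which no vertex is eliminable: for all $v\in W$ and all $u\in W\setminus\{v\}$ there is an escape $v'\in B_s(v,G-\{u\})\cap W$ with $d(u,v')>s'$. From this I would manufacture a perpetual robber strategy, contradicting that $G$ is cop-win: the robber keeps the invariant $r\in W$ and $d(c,r)>s'$ (so the cop cannot capture on its move), and when the cop advances to $c'$, applying the no-corner property with $u=c'$ yields an escape $v'\in B_s(r,G-\{c'\})\cap W$ with $d(c',v')>s'$, a legal move restoring the invariant. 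The delicate point is that the stall condition quantifies only over $u\in W$, while the cop may legally step to some $c'\notin W$; I would handle this by projecting the cop onto $W$ and running the robber's strategy against the projected position, the point being that a cop outside $W$ is no more dangerous than its projection --- the mirror image of the mobility issue met in the forward direction.
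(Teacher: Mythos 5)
This theorem is nowhere proved in the paper you were given: it is imported from \cite{ChChNiVa} (Theorem~\ref{copwin} is a citation), so the only meaningful benchmark is the original proof in that reference, whose skeleton --- cop strategy driven by the dismantling order for one direction, greedy top-down removal plus a robber evasion strategy for the other --- your plan does reproduce. The trouble is that at both load-bearing points your proposal names the obstacle and then defers it. For ``dismantlable $\Rightarrow$ cop-win'' you rightly observe that the Nowakowski--Winkler--Quilliot shadow/retraction argument breaks for $s'\geq 2$ (the map fixing everything and sending $v\mapsto u$ is not non-expansive), and you replace it by a confinement invariant ``robber trapped in $X_{v_j}$, cop parked at $u_j$''. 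But the robber is not literally confined to $X_{v_j}$: the inclusion $B_s(v_j,G-\{u_j\})\cap X_{v_j}\subseteq B_{s'}(u_j,G)$ only threatens a robber sitting \emph{at} $v_j$ while the cop is already \emph{at} $u_j$; while the cop is in transit towards $u_j$, or towards the eliminator of the next target, nothing in what you wrote prevents the robber from moving to vertices above $v_j$ or using them as shortcuts, so the index $j$ has no a priori reason to be monotone. Turning ``confinement'' into an installable and maintainable invariant is exactly the content of the proof in \cite{ChChNiVa}; labelling it ``the technical heart'' and stopping is a genuine gap, not a detail.

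The converse direction has a sharper defect: the fix you propose would fail as stated. The stall condition on $W$ gives, for $v\in W$ and $u\in W\setminus\{v\}$, an escape $v'\in B_s(v,G-\{u\})\cap W$ with $d(u,v')>s'$; this escape is far from $u$ but need not be far from the cop's true position $c'\notin W$, so ``running the robber's strategy against the projected position'' does not restore the invariant $d(c',r')>s'$. What a correct proof needs is a transitivity statement about \emph{covering}, not a transport of escape moves: if $c'$ covers $r$ relative to $W$ (i.e.\ $B_s(r,G-\{c'\})\cap W\subseteq B_{s'}(c')$) and $c'$ was greedily eliminated by $u$ (i.e.\ $B_s(c',G-\{u\})\cap X_{c'}\subseteq B_{s'}(u)$ with $W\subseteq X_{c'}$), then $u$ covers $r$ relative to $W$ --- one checks this by splitting an $(r,x)$-path avoiding $u$ according to whether it meets $c'$, and here $s'\leq s$ is used. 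Iterating along the elimination chain of $c'$ then either produces a vertex of $W\setminus\{r\}$ covering $r$, contradicting the stall, or the chain terminates at $r$ itself. That residual case is real: take $C_4$ plus a true twin $z$ of one of its vertices $r$; greedy removal deletes $z$ (eliminated by $r$) and stalls at $C_4$, and a cop at $z$ covers $r$ relative to $W=C_4$ while $z$'s elimination chain is exactly $z\to r$. It must be excluded using the game history (the invariant $d(c,r)>s'$ shows the cop can never legally step onto such a $z$), which requires an induction along the removal order. None of this machinery appears in your proposal, so the robber's strategy is simply undefined whenever the cop leaves $W$. In short: both directions contain genuine gaps, and in the second the specific repair mechanism you suggest is the wrong one, even though your diagnosis of where the difficulties lie is accurate.
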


We will also consider a stronger version  of $(s,s')$-dismantlability: a graph $G$ is
$(s,s')^*$-{\it dismantlable} if the vertex set of $G$ admits a well-order $\preceq$ such that
for each vertex $v$ of $G$ there exists another vertex $u$ with $u\preceq v$ such that $B_{s}(v,G)\cap X_v\subseteq B_{s'}(u,G).$


In~\cite{ChChNiVa}, using a result from~\cite{ChEs}, it was shown that
$\delta$-hyperbolic graphs are $(s,s')^*$-dismantlable for some values
$s, s'$ depending of $\delta$. For sake of completeness, we recall
here these results.  The following proposition is a particular case of
Lemma~1 from~\cite{ChEs}.

\begin{proposition}\cite{ChEs}\label{prop-VB}
Let $G$ be a $\delta$-hyperbolic graph and $r$ be a non-negative integer.
Let $x,y,z$ be any three vertices of $G$ such that $d(y,z) \leq d(x,z)$
and $d(x,y) \leq 2r$. Then for any vertex $c \in I(x,z)$ such that
$d(x,c)=\min \{r,d(x,z)\}$, the inequality $d(c,y) \leq r+2\delta$ holds.
\end{proposition}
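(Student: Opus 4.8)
The plan is to prove this using the slim-triangles characterization of hyperbolicity (Proposition~\ref{hyp_charact}), since the hypotheses naturally involve geodesics and the conclusion is a distance bound that resembles the kind of estimate one gets by chasing points across a thin triangle.

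First I would set up a geodesic triangle on the triplet $x,y,z$. Choose geodesics $[x,z]$, $[x,y]$, $[y,z]$; by the slimness guaranteed by $\delta$-hyperbolicity (triangles are $3\delta$-slim), every point on $[x,z]$ lies within bounded distance of the union of the two other sides. The key parameter is the point $c\in I(x,z)$ with $d(x,c)=\min\{r,d(x,z)\}$. I would bound $d(c,y)$ by first locating a point $w$ on $[x,y]\cup[y,z]$ with $d(c,w)\le 3\delta$, then controlling $d(w,y)$ using the position of $c$ along $[x,z]$ together with the triangle inequality and the hypotheses $d(x,y)\le 2r$ and $d(y,z)\le d(x,z)$.

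The main case analysis is according to which of the two sides $w$ lands on. If $w\in[x,y]$, then $d(w,y)\le d(x,y)\le 2r$, and combining $d(c,w)\le 3\delta$ we would already be in range, though I expect to need the finer estimate $d(x,c)=r$ to push the bound down to $r+2\delta$ rather than $2r+3\delta$; this is where I would exploit that $c$ is at distance exactly $r$ from $x$ along a geodesic, so $d(c,y)\le d(c,x)+d(x,y)$ is too lossy and instead I would use $d(w,y)=d(x,y)-d(x,w)$ with $d(x,w)$ close to $d(x,c)=r$. If instead $w\in[y,z]$, I would use $d(w,y)=d(y,z)-d(z,w)$ together with $d(z,w)\approx d(z,c)=d(x,z)-r$ and the hypothesis $d(y,z)\le d(x,z)$ to cancel the large terms, leaving a bounded remainder. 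In either case the $3\delta$ slimness constant appears to be weaker than the claimed $2\delta$, so I anticipate the sharper argument uses the thin-intervals/four-point condition directly rather than passing through slim triangles.

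The hard part will be squeezing the constant down to exactly $2\delta$ and handling the degenerate regime $d(x,z)<r$ (where $c=z$ and the claim reads $d(z,y)\le d(z,x)\le r+2\delta$, which follows from the hypotheses almost immediately). To obtain the sharp $2\delta$, I would abandon the slim-triangle route and instead apply the four-point $\delta$-hyperbolicity inequality directly to the quadruple $x,y,z,c$: since $c\in I(x,z)$ we have $d(x,c)+d(c,z)=d(x,z)$, which pins down one of the three distance sums, and comparing the two larger sums among $d(x,z)+d(c,y)$, $d(x,y)+d(c,z)$, $d(x,c)+d(y,z)$ via the $2\delta$-defect should yield $d(c,y)\le r+2\delta$ after substituting $d(x,c)=r$, $d(x,y)\le 2r$, and $d(y,z)\le d(x,z)$. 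I expect the bookkeeping to reduce to verifying that $d(x,z)+d(c,y)$ is one of the two larger sums, which is forced precisely by the hypotheses on $x,y,z$.
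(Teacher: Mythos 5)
Your final plan---applying the four-point condition directly to the quadruple $x,y,z,c$, using $d(x,c)+d(c,z)=d(x,z)$, $d(x,y)\le 2r$, $d(y,z)\le d(x,z)$, and treating the degenerate case $d(x,z)\le r$ (where $c=z$) separately---is exactly the paper's proof, and your instinct to abandon the slim-triangle route as too lossy is also what the paper does. The only bookkeeping you can drop: there is no need to verify that $d(x,z)+d(c,y)$ is among the two larger sums, since the four-point condition yields $d(c,y)+d(x,z)\le \max\{d(c,z)+d(x,y),\, d(x,c)+d(y,z)\}+2\delta$ unconditionally (if that sum were the smallest, it would be bounded by the maximum of the other two even without the $2\delta$ term).
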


\begin{proof}
If $d(x,z) \leq r$, then $c=z$, and $d(c,y) \leq d(x,z) \leq r \leq
r+2\delta$. Suppose now that $d(x,z) > r$. Since $G$ is
$\delta$-hyperbolic, $d(c,y)+d(x,z) \leq \max
\{d(c,z)+d(x,y),d(c,x)+d(y,z)\} +2\delta$. Note that $d(c,z)+d(x,y)
\leq d(c,z)+2r = d(x,z)+r$ and $d(c,x) + d(y,z) \leq r + d(x,z)$. Consequently, $d(c,y) \leq r+ 2\delta$.
\end{proof}

The following corollary is an immediate consequence of
Proposition~\ref{prop-VB}.

\begin{corollary}\cite{ChChNiVa}\label{cor-delta-implies-copwin}
For a $\delta$-hyperbolic graph $G$ and any integer $r \ge \delta$,
any breadth-first search order $\preceq$ is a
$(2r,r+2\delta)^*$-dismantling order of $G$.
\end{corollary}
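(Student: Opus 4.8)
The plan is to take $\preceq$ to be a breadth-first search order rooted at an arbitrary vertex $x_0$, and to verify the $(2r,r+2\delta)^*$-dismantling condition directly from Proposition~\ref{prop-VB}. The property of BFS that I will exploit is that it is consistent with distance to the root: if $w\preceq v$ then $d(x_0,w)\le d(x_0,v)$, and $x_0$ is the $\preceq$-minimal vertex. For each vertex $v\neq x_0$, the eliminating vertex $u$ will be chosen on some $(v,x_0)$-geodesic at distance $\min\{r,d(v,x_0)\}$ from $v$; when $d(v,x_0)\le r$ this is simply $u=x_0$. Since $v\neq x_0$ and $r\ge 1$, we have $d(v,u)=\min\{r,d(v,x_0)\}\ge 1$, so $u$ is strictly closer to the root than $v$, giving $d(x_0,u)<d(x_0,v)$ and hence $u\preceq v$ with $u\neq v$. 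The $\preceq$-minimal vertex $x_0$ requires no eliminator.

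It then remains to check that $B_{2r}(v,G)\cap X_v\subseteq B_{r+2\delta}(u,G)$. I would fix an arbitrary $y\in B_{2r}(v,G)\cap X_v$ and apply Proposition~\ref{prop-VB} with the substitution $x:=v$, $z:=x_0$, and $c:=u$. The two hypotheses of that proposition hold for exactly the reasons built into the construction: first, $d(x,y)=d(v,y)\le 2r$ because $y\in B_{2r}(v,G)$; second, $d(y,z)=d(y,x_0)\le d(v,x_0)=d(x,z)$ because $y\in X_v$ means $y\preceq v$, and BFS-consistency then forces $d(x_0,y)\le d(x_0,v)$. Moreover $u=c$ lies on an $(x,z)$-geodesic at distance $\min\{r,d(x,z)\}$ from $x$, exactly as required. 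Proposition~\ref{prop-VB} then yields $d(u,y)=d(c,y)\le r+2\delta$, i.e. $y\in B_{r+2\delta}(u,G)$. As $y$ was arbitrary, the inclusion follows, so $u$ eliminates $v$ and $\preceq$ is a $(2r,r+2\delta)^*$-dismantling order.

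The argument is short because Proposition~\ref{prop-VB} does all the metric work; the only genuine points of care are bookkeeping rather than analytic. The main subtlety is matching the asymmetric inequality $d(y,z)\le d(x,z)$ demanded by the proposition to the one-sided BFS inequality $d(x_0,y)\le d(x_0,v)$, which is precisely why a BFS order (and not an arbitrary well-order) is used and why the root is taken as the third point $z$. The remaining care lies in the boundary cases: treating $d(v,x_0)\le r$ separately so that $u=x_0$ and invoking the $d(x,z)\le r$ branch of Proposition~\ref{prop-VB}, and observing that the $\preceq$-minimal vertex is exempt from the dismantling requirement. The hypothesis $r\ge\delta$ is not needed for the inclusion itself; it merely records the regime in which this dismantling is of interest.
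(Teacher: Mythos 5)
Your proof is correct and is exactly the argument the paper intends: the paper presents this corollary as an ``immediate consequence'' of Proposition~\ref{prop-VB}, and your instantiation ($x:=v$, $z:=$ the BFS root, $c:=u$ on a geodesic toward the root at distance $\min\{r,d(v,x_0)\}$, with BFS-consistency supplying $d(y,z)\le d(x,z)$) is precisely that intended application, including the correct treatment of the root and of the case $d(v,x_0)\le r$.
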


\section{Main result}

In this section we will prove that $(s,s')^*$-dismantlable graphs with
$s'< s$ are hyperbolic. All graphs occurring in the following result
are connected but not necessarily finite.

\begin{theorem} \label{theorem-main} If a graph $G$ is
  $(s,s')^*$-dismantlable with $0<s'<s$, then $G$ is
  $\delta$-hyperbolic with $\delta = 16(s+s')\left\lceil
  \frac{s+s'}{s-s'}\right\rceil + \frac{1}{2}\leq
  32\frac{s(s+s')}{s-s'} + \frac{1}{2}$.
\end{theorem}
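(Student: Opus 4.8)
The plan is to prove hyperbolicity via the linear isoperimetric inequality (Theorem~\ref{isoperimetric}) by exhibiting, for every edge-loop $c$ of $G$, an $N$-filling whose area is linear in $\ell(c)$, for a suitable $N=O(s)$. The key geometric input is the $(s,s')^*$-dismantling order $\preceq$: every vertex $v$ has an eliminator $u\prec v$ with $B_s(v,G)\cap X_v\subseteq B_{s'}(u,G)$, so in particular $d(u,v)\le s'$ and, for any $w\preceq v$ with $d(v,w)\le s$, we have $d(u,w)\le s'$. Intuitively, the eliminator $u$ is a strictly ``earlier'' vertex that is $s'$-close to $v$ and dominates the $s$-ball of $v$ restricted to earlier vertices. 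I would use this to push any loop toward the $\preceq$-minimum vertex while controlling how the length grows.

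First I would fix a loop $c$ and let $m$ be its $\preceq$-maximal vertex; I would replace the two edges of $c$ incident to $m$ (an $m$-to-$m$ detour of length $2$) by a detour through the eliminator $u$ of $m$. Since $d(u,m)\le s'$ and the two $\preceq$-neighbors $a,b$ of $m$ on $c$ satisfy $a,b\preceq m$ and $d(a,m),d(b,m)\le 1\le s$, the domination gives $d(u,a),d(u,b)\le s'$; replacing the subpath $a\,m\,b$ by a geodesic $a\to u\to b$ yields a new loop $c'$ that no longer uses $m$, has length increased by at most a bounded amount, and whose other vertices are unchanged. The ``hole'' created between the old subpath and the new one is a loop of length at most $2+2s'+1$ (the two old edges, two geodesics of length $\le s'$ to $u$, plus slack), which bounds an internal face of size $O(s+s')$. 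I would iterate this reduction in decreasing $\preceq$-order, eliminating vertices one at a time, each time paying one face of size $\le 2N$ with $N=O(s+s')$.

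The delicate point — and the main obstacle — is bounding the total number of faces, i.e. the area, linearly in $\ell(c)$, because naively each elimination can \emph{lengthen} the loop, and lengthening loops could blow up the number of subsequent eliminations. This is exactly where the hypothesis $s'<s$, and more precisely the factor $\lceil\frac{s+s'}{s-s'}\rceil$ in the statement, must enter: the gap $s-s'$ provides a strict contraction (an eliminator is $s'$-close but dominates the whole $s$-ball), so one can set up a potential or amortization argument in which each face of area cost $O(s+s')$ buys progress proportional to $s-s'$, giving an area bound of the form $K\cdot\ell(c)$ with $K=O\bigl(\frac{s+s'}{s-s'}\bigr)$. Concretely I expect to track a potential combining the current loop length with a weighted count of remaining high-$\preceq$ vertices, show each elimination step decreases it by a definite amount while adding one bounded face, and conclude a linear total face count. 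The filling disk $D$ is assembled from these faces; non-expansiveness of $\Phi$ is automatic since each new path is realized by actual geodesics in $G$.

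Having produced an $N$-filling with $N=2(s+s')$ (so faces have $\le 2N$ edges) and $\mathrm{Area}_N(c)\le K\cdot\ell(c)$ with $K=\lceil\frac{s+s'}{s-s'}\rceil$, I would feed these into the converse direction of Theorem~\ref{isoperimetric}. Rather than using the crude bound $\delta\le 108K^2N^3+9KN^2$ there, the sharp constant $16(s+s')\lceil\frac{s+s'}{s-s'}\rceil+\frac12$ in Theorem~\ref{theorem-main} suggests the authors extract slimness directly: an isoperimetric bound with these parameters forces geodesic triangles to be $\delta_0$-slim with $\delta_0=16(s+s')\lceil\frac{s+s'}{s-s'}\rceil$, and then Proposition~\ref{prop-soto} upgrades $\delta_0$-slimness of a graph to $(2\delta_0+\tfrac12)$-hyperbolicity — matching the stated bound up to the bookkeeping of constants. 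The final inequality $\delta\le 32\frac{s(s+s')}{s-s'}+\frac12$ is then just the estimate $\lceil\frac{s+s'}{s-s'}\rceil\le\frac{2s}{s-s'}$ applied to the leading term.
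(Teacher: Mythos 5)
Your overall strategy (prove a linear isoperimetric inequality from the dismantling order, then convert it to hyperbolicity via a sharpened converse) is indeed the paper's strategy, but your loop-surgery step has a genuine gap that you identify yourself and never close. Replacing the length-$2$ subpath $a\,m\,b$ around the $\preceq$-maximal vertex $m$ by a detour $a\to u\to b$ through its eliminator \emph{lengthens} the loop by up to $2s'-2$, and the vertices of the new geodesics $[a,u]$ and $[u,b]$ are arbitrary: nothing forces them to be $\preceq m$, so the next maximum of the modified loop can be \emph{higher} in the well-order than $m$, the iteration is not an elimination ``in decreasing $\preceq$-order'', and neither induction on length nor induction along the well-order applies (the graph may even be infinite, so a ``weighted count of remaining high-$\preceq$ vertices'' is not a usable potential). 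The amortization you hope for is exactly the crux, and the paper resolves it with a different cut, not a potential argument: Lemma~\ref{shortenning} uses the full strength of the domination $B_s(m,G)\cap X_m\subseteq B_{s'}(u,G)$ by cutting the arc of length $2s$ \emph{centered} at $m$. Its endpoints $x=v_{i-s}$, $y=v_{i+s}$ lie in $X_m$ at distance $\le s$ from $m$, hence both are within $s'$ of $u$ and $d(x,y)\le 2s'$; replacing this $2s$-arc by a geodesic of length $\le 2s'$ creates one face of perimeter $\le 2(s+s')$ and \emph{strictly shortens} the loop by at least $2(s-s')$. Induction on $\ell(c)$ then gives $\mathrm{Area}_{s+s'}(c)\le\left\lceil\frac{\ell(c)}{2(s-s')}\right\rceil$ (Proposition~\ref{linear_area}) with no bookkeeping at all. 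This ``cut a $2s$-arc, not a $2$-arc'' idea is the missing ingredient in your first step.

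There is a second, smaller gap: you assert that the isoperimetric bound ``forces'' slimness with the stated constant, but this is not citable. Theorem~\ref{isoperimetric} as stated only gives $\delta\le 108K^2N^3+9KN^2$, which with $K=\Theta\bigl(\frac{1}{s-s'}\bigr)$ and $N=s+s'$ is cubic in $s$ when $s'=s-1$, not quadratic. The paper must and does prove the refinement (Lemma~\ref{lem-lower-bound-faces} plus Proposition~\ref{prop-isoperimetric-implies-hyperbolic}): a planar face-counting argument shows any $N$-filling of a cycle built from a non-slim triangle has at least $\frac{k(q-p-2k)}{N^2}$ faces near each long side, and comparing with $\mathrm{Area}_N(c)\le\lceil K\ell(c)\rceil$ yields $16KN^2$-slimness, after which Proposition~\ref{prop-soto} gives $(32KN^2+\frac{1}{2})$-hyperbolicity. (Your constants are also off by a factor $2$: with $N=s+s'$ and $K=\frac{1}{2N}\left\lceil\frac{N}{s-s'}\right\rceil$ the slimness constant is $8(s+s')\left\lceil\frac{s+s'}{s-s'}\right\rceil$, and the doubling to $16(s+s')\left\lceil\frac{s+s'}{s-s'}\right\rceil$ occurs only in Soto's step.) Without carrying out both of these ingredients, your outline does not assemble into a proof.
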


\begin{proof} At the first step,  we will establish that for any
   cycle $c$ of $G$, Area$_{s+s'}(c)\le
   \left\lceil\frac{\ell(c)}{2(s-s')}\right\rceil$. In this part, we
   will follow the proofs of Lemma 2.6 and Proposition 2.7 of
   \cite[Chapter III.H]{BrHa} for hyperbolic graphs. At the second step, we will
   present a modified and refined proof of Theorem 2.9 of \cite[Chapter III.H]{BrHa},
   which will allow us to deduce that if Area$_{s+s'}(c)\le
   \frac{\ell(c)}{2(s-s')}+1$, then $G$ is
   $O(\frac{s^2}{s-s'})$-hyperbolic.

\begin{lemma} \label{shortenning} If a graph $G$ is
  $(s,s')^*$-dismantlable  with $s'<s$ and
  $c=(v_0,v_1,\ldots,v_{n-1},v_0)$ is a loop of $G$ of length $n>2(s+s'),$
  then $c$ contains two vertices $x=v_{p},y=v_q$ with $q-p=2s  \mod n$
  such that $d(x,y)\le 2s'$. 
\end{lemma}

\begin{proof}   Let $\preceq$ be an $(s,s')^*$-dismantling well-order of
  the vertex-set $V$ of $G$ and let $v$ be the largest element of the
  vertex-set of $c$ in this order.  Let $u$ be a vertex of $G$ that
  eliminates $v$ in $\preceq$. Without loss of generality suppose that
  $v=v_i,$ where $i>s$ and $i<n-s$. Let $x=v_{i-s}$ and
  $y=v_{i+s}$. Since $d(v,x)\le s,d(v,y)\le s,$ and $x,y\in X_v,$ from
  the definition of $(s,s')^*$-dismantlability we conclude that
  $d(u,x)\le s'$ and $d(u,y)\le s'.$ By triangle inequality,
  $d(x,y)\le 2s'$. Since $x=v_{i-s}$ and $y=v_{i+s}$, we obtain that
  $(i+s)-(i-s) = 2s$, and we can set $p:=i-s$ and $q:=i+s$.
\end{proof}

\begin{proposition} \label{linear_area} If a graph $G$ is
  $(s,s')^*$-dismantlable with $s'<s$ and $c$ is a
  loop of $G$, then {\rm Area}$_{s+s'}(c)\le
  \left\lceil\frac{\ell(c)}{2(s-s')}\right\rceil.$
\end{proposition}

\begin{proof} Let $c =(v_0, v_1, \ldots, v_{n-1}, v_0)$ be a
   loop of $G$. To prove that Area$_{s+s'}(c)\le
   \left\lceil\frac{\ell(c)}{2(s-s')}\right\rceil$, it suffices to
   show that there exists a 2-connected planar graph $D$ and a
   non-expansive map  $\Phi$ from $D$ to $G$ such that

\begin{itemize}
\item[(F1)] $D$ has at most
  $\left\lceil\frac{\ell(c)}{2(s-s')}\right\rceil$ faces,
\item[(F2)] all internal faces of $D$ have length at most $2(s+s')$,
\item[(F3)] the external face of $D$ is a simple cycle $(v'_0, v'_1,
  \ldots,v'_{n-1},v'_0)$ such that  $\Phi(v_i') =
  v_i$ for all $0  \leq i \leq n-1$.
\end{itemize}
 The image of each face of $D$ will be a loop of $G$ of length at
 most $2(s+s')$.

We proceed by induction on the length $n:=\ell(c)$ of $c$. 
If $n\le 2(s+s'),$ let $D$ consists
of a single face bounded by a simple cycle
$(v_0',v_1',\ldots,v_{n-1}',v_0')$ of length $n$ and for each $i$, let
$\Phi(v_i') = v_i$. This shows that Area$_{s+s'}(c)=1$.

Now, suppose that $n>2(s+s')$. By Lemma \ref{shortenning} there exist
two vertices $x=v_p,y=v_q$ of $c$ with $q-p =2s \mod n$ and $d(x,y)\le
2s'.$ Suppose without loss of generality that $q = p+2s$. Let
$P'=(x=v_{p},v_{p+1},\ldots, v_{q-1},v_q=y)$ and
$P''=(x=v_{p},v_{p-1},\ldots,v_0,v_{n-1},\ldots,v_{q+1},v_q=y)$ be the
two $(x,y)$-paths constituting $c$.  If $x = y$, let $P = (x,y)$; if
$x \neq y$, let $P=(x = w_0, w_1, \ldots, w_k=y)$ be any shortest path
in $G$ between $x$ and $y$. Note that $\ell(P) \leq 2s' < 2s =
\ell(P')$.



Let $c_0$ be the loop obtained as the concatenation of the paths $P$
from $x$ to $y$ and $P'$ from $y$ to $x$. Since $\ell(P') = 2s$ and
$\ell(P) \leq 2s'$, we have $\ell(c_0)\le 2s+2s'$.  Let $c_1$ be the
loop obtained as the concatenation of the paths $P''$ from $y$ to
$x$ and $P$ from $x$ to $y$.  Note that
$\ell(c_1) = \ell(P)+\ell(P'') \le \ell(P)+\ell(c)-\ell(P')\le
\ell(c)-(2s-2s') < \ell(c)$.



By induction assumption, $c_1$ admits an $(s+s')$-filling
$(D_1,\Phi_1)$ satisfying the conditions (F1),(F2), and (F3). Note
that the external face of $D_1$ is bounded by a cycle $(v_p' = x' =
w_0', w_1', \ldots, w_{k}'= y' =
v_q',v_{q+1}',\ldots,v_{n-1},v_0,\ldots,v_{p-1}',v_p')$ such that
 $\Phi_1(v_i') = v_i$ for all $i \in [0,p] \cup [q,n-1]$  and
$\Phi_1(w_i') = w_i$ for all $0\le i\le k$.

Consider the planar graph $D$ obtained from $D_1$ by adding $q-p-1$
new vertices forming a path $(x'=v_p', v_{p+1}',\ldots,v_q'=y')$ from
$x'$ to $y'$ on the external face of $D_1$ such that the external face
of $D$ is bounded by the cycle $(v_0',v_1',\ldots,v_{n-1}',v_0')$.
Let $\Phi$ be the non-expansive map defined by $\Phi(v') =
\Phi_1(v')$ for every $v \in V(D_1)$ and $\Phi(v_i') = v_i$ for every
$p+1 \leq i \leq q-1$. Clearly, $D_1$ is a 2-connected planar graph and
for each $0 \leq i \leq n-1$ we have $\Phi(v_i') = v_i$.  The planar graph
$D$ has one more internal face than $D_1$ that is bounded by the cycle
$(x'=v_p',v_{p+1}',\ldots,v_q'=y'=w_{k}',w_{k-1}',\ldots,w_{1}',w_{0}'=x')$. This
cycle has the same length as $c_0$ and is thus bounded by
$2(s+s')$. Consequently, $(D,\Phi)$ satisfies the conditions (F2) and
(F3).

It remains to show that the $(s+s')$-filling $(D,\Phi)$ of $c$
satisfies (F1). Since $\ell(c_1)\le \ell(c)-2(s-s')$, by induction
assumption, we obtain
$${\rm Area}_{s+s'}(c)\le {\rm Area}_{s+s'}(c_1)+1\le \left\lceil \frac{\ell(c_1)}{2(s-s')}\right\rceil+1\le \left\lceil\frac{\ell(c)-2(s-s')}{2(s-s')}\right\rceil+1= \left\lceil\frac{\ell(c)}{2(s-s')}\right\rceil,$$
yielding the desired inequality.
\end{proof}

Now, we revisit the proof of Theorem 2.9 of~\cite[Chapter III.H]{BrHa} that
corresponds to Theorem~\ref{isoperimetric} stated above.  Namely, we
extend this result to the case of rational $K$  and
improve its statement by showing that the hyperbolicity
of $G$ is quadratic (and not cubic) in $N$.

We start with an auxiliary result.  For a subset of vertices
$A\subseteq V$ of a graph $G=(V,E)$ and an integer $k\ge 0,$ let
$B_k(A,G)=\{ v\in V: d_G(v,A)\le k\}$ denote the $k$-{\it
  neighborhood} of $A$ in $G$.

\begin{lemma}\label{lem-lower-bound-faces}
Let $G$ be a graph and $k>0$ be an integer.  Consider a simple
cycle $c = (v_0, v_1, \ldots, v_{n-1})$ of $G$ and two integers $p, q$
such that $k < p < p +2k \leq q < n-k$, $d_G(v_p,v_q) = q-p$, and
$B_k(\{v_p,v_{p+1},\ldots,v_q\},G)\cap c
=\{v_{p-k},v_{p-k+1},\ldots,v_{q+k-1},v_{q+k}\}$.  Consider any
$N$-filling $(D,\Phi)$ of $c$ and let $c' = (v'_0, v'_1,
\ldots,v'_{n-1},v'_0)$ be the cycle bounding the external face of $D$
(where $\Phi(v_i') = v_i$ for all $0\le i\le n-1$). Then, in the
subgraph of $D$ induced by $B_k(\{v_p',v_{p+1}',\ldots,v_q'\},D)$, there
exist at least $\frac{k(q-p-2k)}{N^2}$ faces of $D$ that contain at
most one vertex at distance $k$ from $\{v_p',v_{p+1}',\ldots,v_q'\}$.
\end{lemma}




\begin{proof}
Let $(D,\Phi)$ be an $N$-filling of $c$. Since $\Phi$ is a
non-expansive map from $D$ to $G$, for any vertices $u',v' \in V(D)$,
the distance $d_D(u',v')$ in $D$ is greater than or equal to the
distance $d_G(\Phi(u'),\Phi(v'))$ in $G$ between their images. Note
also that $\ell(c) \geq q-p+2k+1$.

Let $F(D)$ be the set of faces of $D$.  We define recursively a set
$V_i \subseteq V(D)$ of vertices, a set $E_i \subseteq E(D)$ of edges,
and a set $F_i \subseteq F(D)$ of faces of $D$. Let $P_0 = (V_0,E_0)$
be the path $(v'_p, v'_{p+1}, \ldots, v'_q)$.  Let $F_0$ be the set of
faces of $D$ that contain vertices of $V_0$.

For any $i \geq 1$, let $V_{i}$ (resp., $E_{i}$) be the set of
vertices (resp., edges) belonging simultaneously to faces of
$F_{i-1}$ and to faces of $F(D) \setminus (F_0 \cup \ldots \cup
F_{i-1})$. Let $F_i$ be the set of faces of $F(D) \setminus (F_0 \cup
\ldots \cup F_{i-1})$ containing vertices of $V_i$.  Since $D$ is a
planar graph, for each $i$, the connected components of the graph
$H_i:=(V_i,E_i)$ are (non-necessarily simple) paths and cycles. The
vertices of $c'\cap V_i$ necessarily belong to a single path of $H_i$
(again, this follows from the planarity of $D$), which we will denote
by $P_i$.

Since each face of $D$ has length at most $2N$, all vertices appearing in
a face of $F_i$ are at distance at most $N$ from $V_i$. Moreover, each
face of $F_i$ contains at most one vertex at distance $N$ from
$V_i$. Consequently, each face of $F_i$ contains only vertices at distance
at most $(i+1)N$ from $V_0 = \{v'_p, v'_{p+1}, \ldots, v'_q\}$, and
each face of $F_i$ contains at most one vertex at distance $(i+1)N$
from $V_0$.

Assume now that $i \leq k/N$, and let $F = F_0 \cup F_1 \cup \ldots
\cup F_{i-1}$. Consider a vertex $v_j'$ of the external face of $D$
that belongs to a face of $F$. Since $d_G(v_j,\Phi(V_0))\le
d_D(v_j',V_0) \leq iN \leq k$, and since $B_k(\Phi(V_0),G) \cap
c=\{v_{p-k},v_{p-k+1},\ldots,v_{q+k-1},v_{q+k}\}$, we have $p-k \leq j
\leq q+k$.  Consequently, $v'_0$ does not appear on a face of $F$.
Therefore, $V_{i}$ and $E_{i}$ are non-empty and $P_{i}$ is a
well-defined path of $H_{i}$.  Let $\ell \leq p$ be the largest index
such that $v_{\ell-1}'$ does not belong to a face of $F$; we know that
$p-k \leq \ell \leq p$.  Similarly, let $j \geq q$ be the smallest
index such that $v_{j+1}'$ does not belong to a face of $F$; we know
that $q \leq j \leq q+k$. Since $\Phi$ is non-expansive,
$d_D(v_p',v_q') \geq d_G(v_p,v_q) = q-p$ and since $(v_p',v_{p+1}',
\ldots, v_q')$ is a path of length $q-p$ from $v_p'$ to $v_q'$ in $D$,
$d_D(v_p',v_q')=q-p$. By triangle inequality, $d_D(v_\ell',v_j') \geq
d_D(v_p',v_q') - d_D(v_p',v_\ell') - d_D(v_q',v_j') \geq q-p-2k$.  Let
$H'_i$ be the subgraph of $D$ induced by the vertices appearing on
faces of $F_i$. Since $P_i$ is a subgraph of $H'_i$, there is a path
from $v_\ell'$ to $v_j'$ in $H_i'$. Let $P'_i = (v_\ell'=w_0', \ldots,
w_t' = v_j')$ be a shortest $(v_\ell',v_j')$-path in $H'_i$.  Note
that if two vertices $w_{j_1}', w_{j_2}'$ of $P_i'$ belong to a common
face of $F_i$, then $d_{H_i'}(w_{j_1}',w_{j_2}') \leq N$ and
consequently, $|j_2 - j_1| \leq N$. Consequently, the vertices of
$P_i'$ belong to at least $t/N$ distinct faces of $F_i$.  Since $t =
d_{H_i'}(v_\ell',v_j') \geq d_D(v_\ell',v_j') \geq q - p- 2k$, this
implies that there are at least $(q-p-2k)/N$ faces in $F_i$.

Therefore, if we set $i:= k/N$ and consider the number $A$ of faces in
$F=F_0 \cup F_1 \cup \ldots \cup F_{i-1}$, we get that $A \geq k(q- p
- 2k)/N^2$.  Note that each face of $F$ contains only vertices at
distance at most $iN=k$ from $V_0$, and every face of $F$ contains at
most one vertex at distance $k$ from $V_0$.
\end{proof}

\begin{proposition}\label{prop-isoperimetric-implies-hyperbolic}
For a graph $G$ and constants $K \in \mathbb{Q}$ and $N \in
\mathbb{N}$ such that $2KN$ is a positive integer, if for every cycle
$c$ of $G$, Area$_N(c) \leq \lceil K l(c)\rceil $, then the geodesic triangles
of $G$ are $16KN^2$-slim and $G$ is $(32KN^2+\frac{1}{2})$-hyperbolic.
\end{proposition}

\begin{proof}
Using the fact that if all geodesic triangles of $G$ are $\delta$-slim
then $G$ is $2\delta+\frac{1}{2}$-hyperbolic
(Proposition~\ref{prop-soto}), we will show that under our conditions,
all geodesic triangles of $G$ are $16 KN^2$-slim.  This proof mainly
uses ideas and notations from the proof of Theorem~2.9
of~\cite[Chapter III.H]{BrHa}.

Let $k:= 2KN^2 $ and assume that there exist three (graph) geodesic segments
$[p,q]$, $[q,r]$ and $[p,r]$ forming a geodesic triangle
$\Delta(p,q,r)$ of $G$ that is not $8k$-slim, i.e., there exists a
vertex $v \in [p,q]$ such that $d(v,[p,r]\cup[q,r]) > 8k$. Exchanging
the roles of $p$ and $q$ if necessary, there are two cases to consider
(see Figure~\ref{fig-iso-implies-hyperbolic}):
\begin{itemize}
\item either $d([p,v],[q,r]) > 2k$ and $d([v,q],[p,r] > 2k$,
\item or there exists $w \in [v,q]$
such that $d(w,[p,r]) \leq 2k$.
\end{itemize}

\begin{figure}[t]
\centering\includegraphics[scale=0.8]{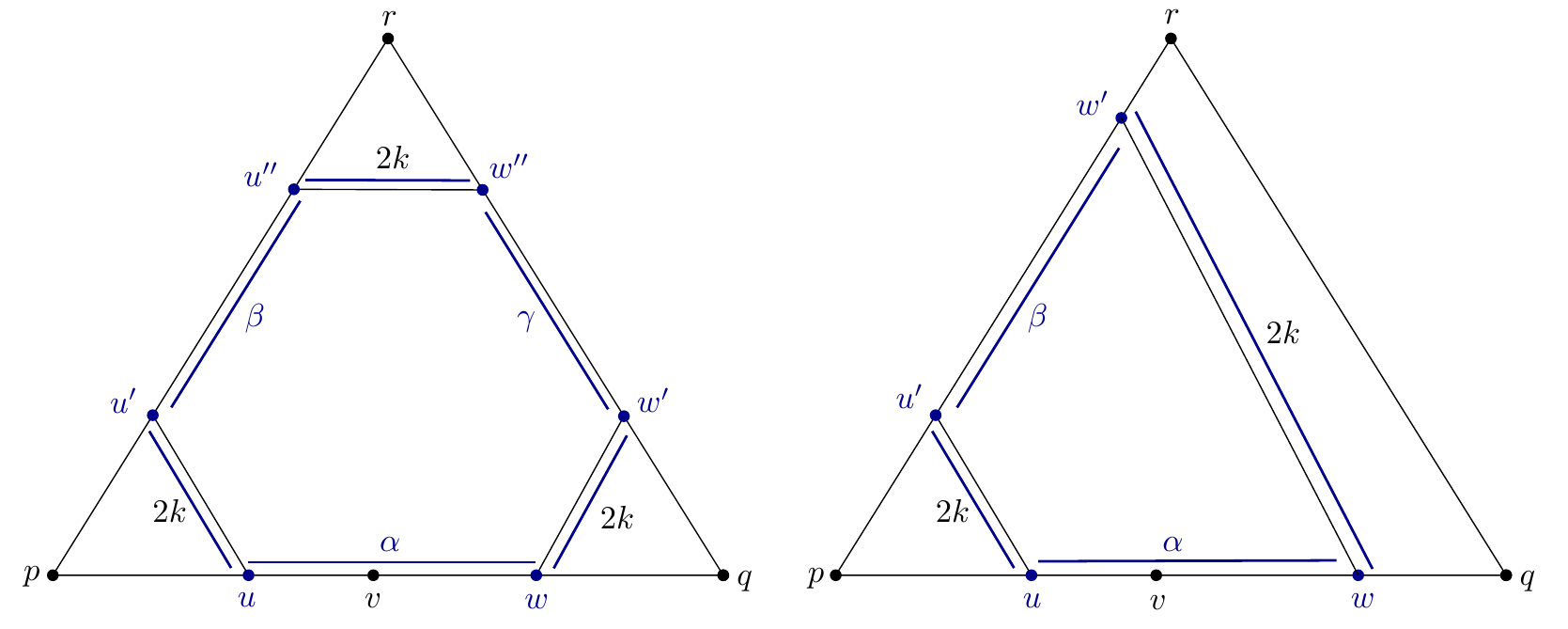}
\caption{Cases 1 and 2 of the proof of Proposition \ref{prop-isoperimetric-implies-hyperbolic}}
\label{fig-iso-implies-hyperbolic}
\end{figure}

\noindent\textbf{Case 1.} (see Figure~\ref{fig-iso-implies-hyperbolic}, left)
$d([p,v],[q,r]) > 2k$ and $d([v,q],[p,r] > 2k$.
\smallskip

In this case, let $u \in [p,v]$ be the closest vertex to $v$ such that
$d(u,[p,r])=2k$ and let $u'\in [p,r]$ be the closest vertex to $r$
such that $d(u,u')=2k$. Let $w \in [v,q]$ be the closest vertex to $v$
such that $d(w,[q,r])=2k$, and let $w'\in [q,r]$ be the
closest vertex to $r$ such that $d(w,w')=2k$.
We denote by $[u',r]$ (resp. $[w',r]$) the subgeodesic of $[p,r]$
(resp. $[q,r]$) from $u'$ (resp. $w'$) to $r$.  Let $u'' \in [u',r]$
be the closest vertex from $u'$ such that $d(u'',[w',r]) \leq 2k$ and
let $w''\in [w',r]$ be the closest vertex from $w'$ such that
$d(u'',w'') \leq 2k$.  We denote by $[u',u'']$ (resp. $[w',w'']$) the
subgeodesic of $[u',r]$ (resp. $[w',r]$) from $u'$ (resp. $w'$) to
$u''$ (resp. $w''$).  Let $[u,u']$ (resp. $[w,w']$, $[u'',w'']$) be a
geodesic from $u$ to $u'$ (resp. from $w$ to $w'$ and from $u''$ to
$w''$).  Let $\alpha = d(u,w)$, $\beta= d(u',u'')$, and $\gamma =
d(w',w'')$.  Since $d(v,[p,r]\cup[q,r]) > 8k$, $\alpha >12k$; since
$[u,w]$ is a shortest path, $\beta+\gamma > 6k$.  Due to our choice of
$u,u',u'',w,w',w''$, $d([u,w],[u',u''])= d([u,w],[w',w'']) = 2k$. Note
that if $u' \neq u''$, then $d([u',u''],[w',w'']) = 2k$ and that if $u' =
u''$, then $d(u',w') \geq d(u,w) - d(u,u') - d(w,w') = \alpha - 4k >
8k$. In both cases, $d([u',u''],[w',w'']) = d(u'',w'') = 2k$.

Let $c$ be the simple cycle of $G$ obtained as the concatenation of
the six geodesics $[u,u'], [u',u''], [u'',w''], [w'',w'], [w',w],$ and
$[w,u]$ (see Fig. \ref{fig-iso-implies-hyperbolic}, left). From the
definition of the vertices $u,u',u'',w'',w',w$ it follows that $c$ is
a simple cycle.

Moreover, if there exists $x \in [u,w]$ such that $d(x,[u'',w'']) \leq
k$, then either $d(x,u'') \leq 2k$, or $d(x,w'') \leq 2k$. In the
first case, from the definition of $u'$ and $u$, it implies that
$u'=u''$ and that $x = u$. Analogously, in the second case, it implies
that $w'=w''$ and $x=w$. Consequently, the only vertices of $c$
appearing in $B_k([u,w],G)$ are the vertices at distance at most $k$
from $[u,w]$ on $c$.  Similarly, if there exists $x \in [w',w'']$ such
that $d(x,[u,u']) \leq k$, then either $d(x,u) \leq 2k$, or $d(x,u')
\leq 2k$. In the first case, it contradicts $d([p,v],[q,r]) > 2k$. In
the second case, from our choice of $u''$ and $w''$, it implies that
$u'=u''$ and $x=w''$. Consequently, the only vertices of $c$ appearing
in $B_k([w',w''],G)$ are the vertices at distance at most $k$ from
$[w',w'']$ on $c$. For the same reasons, the only vertices of $c$
appearing in $B_k([u',u''],G)$ are the vertices at distance at most
$k$ from $[u',u'']$ on $c$. This proves that $c$ satisfies the
conditions of Lemma \ref{lem-lower-bound-faces} for $[u,v]$,
for $[u',u'']$ if $\beta > 2k$, and for $[w',w'']$ if $\gamma > 2k$.

Let $(D,\Phi)$ be an $N$-filling of $c$. We want to get a lower bound
on the number of faces of $D$ (and therefore, a lower bound on
Area$_N(c)$). In the planar graph $D$, we denote by $\Phi^{-1}([u,w])$
(resp., $\Phi^{-1}([u',u''])$ and $\Phi^{-1}([w',w''])$) the
unique path on the boundary of $D$ that is mapped to $[u,w]$
(resp., $[u',u'']$ and $[w',w'']$).

Let $F_\alpha$ denote the set of faces of $D$ that contain only
vertices in $B_k(\Phi^{-1}([u,w]),D)$ and at most one vertex at
distance $k$ from $\Phi^{-1}([u,w])$.  Similarly, let $F_\beta$ (resp.
$F_\gamma$) be the set of faces of $D$ that contain only vertices in
$B_k(\Phi^{-1}([u',u'']),D)$ (resp. in $B_k(\Phi^{-1}([w',w'']),D)$)
and at most one vertex at distance $k$ from $\Phi^{-1}([u',u''])$
(resp.  from $\Phi^{-1}([w',w''])$). Note that from our choice of $u$
and $u'$, if $s \in [u,w]$, $t \in [u',u'']$, and $d(s,t) = 2k$, then
$s = u$ and $t = u'$. Consequently, there is no face in $F_\alpha \cap
F_\beta$. Similarly, $F_\alpha \cap F_\gamma=F_\beta \cap
F_\gamma=\emptyset$. Moreover, there is no edge appearing in a face
$f$ of $F_\alpha$ and $f'$ of $F_\beta$ since both endvertices of this
edge should be at distance $k$ from $[u,w]$ and $[u',u'']$. Similarly,
no edge appears in a face of $F_\alpha$ (resp. $F_\beta$) and in a
face of $F_\gamma$.  Consequently, the number of faces of $D$ is at
least $|F_\alpha|+|F_\beta|+|F_\gamma|+1$.

From Lemma~\ref{lem-lower-bound-faces}, since $\alpha > 2k$,
$F_\alpha$ contains at least $ \frac{k(\alpha-2k)}{N^2} = 2K
(\alpha-2k)$ faces. Similarly, if $\beta > 2k$ (resp. $\gamma > 2k$),
then $|F_\beta| \geq 2K(\beta-2k)$ (resp. $|F_\gamma| \geq 2K(\gamma -
2k)$). Note that if $\beta \leq 2k$ (resp. $\gamma \leq 2k$), then
$|F_\beta| \geq 0 \geq 2K(\beta-2k)$ (resp. $|F_\gamma| \geq 0 \geq
2K(\gamma - 2k)$).  Consequently, Area$_{N}(c) \geq
2K(\alpha+\beta+\gamma-6k) +1$. By the isoperimetric inequality,
Area$_N(c) \leq K \ell(c) + 1 = K (\alpha + \beta + \gamma + 6k) +
1$. From these two inequalities, we get that $\alpha+\beta+\gamma \leq
18 k$, contradicting the fact that $\alpha > 12k$ and $\beta+\gamma >
6k$.

\medskip\noindent\textbf{Case 2.} (see Figure~\ref{fig-iso-implies-hyperbolic}, right)
There exists $w \in [v,q]$ such that $d(w,[p,r]) \leq 2k$.
\smallskip

In this case, let $w \in [v,q]$ be the closest vertex to $v$ such that
$d(w,[p,r]) = 2k$; let $w' \in [p,r]$ be the closest vertex to $p$
such that $d(w,w') = 2k$. We denote by $[p,w']$ the subgeodesic of
$[p,r]$ from $p$ to $w'$.  Let $u \in [p,v]$ be the closest vertex to
$v$ such that $d(u,[p,w'])=2k$ and let $u'\in [p,w']$ be the closest
vertex to $w'$ such that $d(u,u')=2k$. Let $[u,u']$ (resp. $[w,w']$)
be a geodesic from $u$ to $u'$ (resp. from $w$ to $w'$).  Let $\alpha
= d(u,w)$ and $\beta= d(u',w')$.  Due to our choice of $w,w',u$ and
$u'$, $d([u,w],[u',w']) =2k$; since $d(v,[p,r]\cup[q,r]) > 8k$,
$\alpha >12k$; since $[u,w]$ is a shortest path, $\beta > 8k$.

Let $c$ be the simple cycle obtained as the concatenation of geodesics
$[u,u'], [u',w'], [w',w],$ and $[w,u]$.  As in Case~1, using
Lemma~\ref{lem-lower-bound-faces}, we show that Area$_{N}(c) \geq
2K(\alpha+\beta-4k) +1$. By the isoperimetric inequality, Area$_N(c)
\leq K \ell(c) + 1 = K (\alpha + \beta + 4k) + 1$. From these
inequalities, we get that $\alpha+\beta \leq 12 k$, contradicting the
fact that $\alpha > 12k$ and $\beta > 8k$.
\end{proof}

\begin{remark} \label{remark2}
The dependence of $\delta ,N$ in Proposition \ref{prop-isoperimetric-implies-hyperbolic} is the ``best possible'' in the following sense. There are graphs
$G_N$ ($N\in \mathbb N)$ which satisfy
 Area$_N(c) \leq \lceil l(c)\rceil $ and which are not $\delta $-hyperbolic for $\delta= o(N^2)$ (so $\delta $ in general grows quadratically in $N$).

Indeed, take $G_N$ to be a planar square $N\times N$ grid subdivided
into squares of side-length $N$ (see Figure~\ref{fig-grid-grid} for an
example with $N = 4$). Then clearly for every cycle $c$, Area$_{4N}(c)
\leq \dfrac{1}{4}\lceil l(c)\rceil $. Consider now the four corners
$a, b, c, d$ of the grid (see Figure~\ref{fig-grid-grid}); we have
$d(a,c)+d(b,d) = 4N^2 > 2N^2 = d(a,b)+d(c,d) = d(a,d)+d(b,c)$ and thus
$\delta \geq N^2$.
\end{remark}

\begin{figure}[t]
\centering\includegraphics[scale=1]{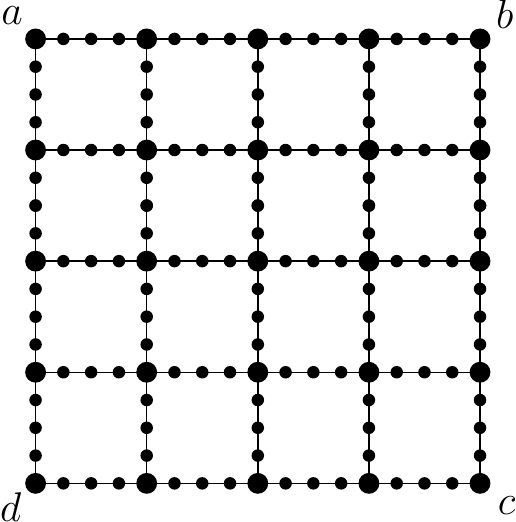}
\caption{The graph described in Remark~\ref{remark2} when $N = 4$.}
\label{fig-grid-grid}
\end{figure}

The assertion of Theorem \ref{theorem-main} follows from Propositions
\ref{linear_area} and \ref{prop-isoperimetric-implies-hyperbolic} by
setting $N:=s+s'$ and $K:=
\frac{1}{2N}\cdot\left\lceil\frac{N}{(s-s')}\right\rceil \geq \frac{1}{2(s-s')}$.
\end{proof}

Here are the main consequences of Theorem \ref{theorem-main}:

\begin{corollary} \label{cor1} If a graph $G$ is
  $(s,s')$-dismantlable with $s'<s$ (in particular, $G$ is a finite
  $(s,s')$-cop-win graph), then $G$ is $\delta$-hyperbolic with
  $\delta=64s^2.$
\end{corollary}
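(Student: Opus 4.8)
The plan is to reduce the weaker hypothesis of $(s,s')$-dismantlability to the starred version required by Theorem~\ref{theorem-main}. The obstacle is purely definitional: Theorem~\ref{theorem-main} applies to $(s,s')^*$-dismantlable graphs, where the ball $B_s(v,G)$ is measured in the full graph, whereas the hypothesis here only controls $B_s(v,G-\{u\})$, the ball measured after deleting the eliminating vertex $u$. So first I would prove the bridging claim: \emph{if $G$ is $(s,s')$-dismantlable with $s'<s$, then $G$ is $(s,s-1)^*$-dismantlable}, using the very same well-order $\preceq$.

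To prove the claim, fix a vertex $v$ and let $u\preceq v$ be a vertex that eliminates $v$ in the $(s,s')$-dismantling, so that $B_s(v,G-\{u\})\cap X_v\subseteq B_{s'}(u,G)$. I would take an arbitrary $w\in B_s(v,G)\cap X_v$ and split into two cases. If $w\in B_s(v,G-\{u\})$, then $w\in B_{s'}(u,G)\subseteq B_{s-1}(u,G)$, because $s'<s$ forces $s'\le s-1$. Otherwise $w\notin B_s(v,G-\{u\})$, which means that every $(v,w)$-path of length at most $s$ in $G$ passes through $u$; since $w\in B_s(v,G)$ every $(v,w)$-geodesic of $G$ has length $d(v,w)\le s$ and hence passes through $u$, so $d(v,w)=d(v,u)+d(u,w)$. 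As $u\neq v$ we have $d(v,u)\ge 1$, and therefore $d(u,w)=d(v,w)-d(v,u)\le s-1$, i.e. $w\in B_{s-1}(u,G)$. In both cases $w\in B_{s-1}(u,G)$, so $B_s(v,G)\cap X_v\subseteq B_{s-1}(u,G)$, which is exactly the $(s,s-1)^*$-dismantling condition. The only real work is this second case, the ``blocked'' vertices reachable from $v$ within $s$ steps only by passing through $u$; the key observation is that the detour through $u$ is essentially free, since routing through $u$ already costs at least one step from $v$ and leaves at most $s-1$ steps to reach $w$.

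With the claim in hand, I would apply Theorem~\ref{theorem-main} with the pair $(s,s-1)$ in place of $(s,s')$; note that $1\le s'<s$ guarantees $s\ge 2$, so $0<s-1<s$ and the theorem is applicable. Substituting $s'=s-1$ into the bound $16(s+s')\left\lceil\frac{s+s'}{s-s'}\right\rceil+\frac12$ gives $\delta\le 16(2s-1)\lceil 2s-1\rceil+\frac12 = 16(2s-1)^2+\frac12 = 64s^2-64s+\frac{33}{2}$, which is at most $64s^2$ for every $s\ge 1$ since then $-64s+\frac{33}{2}\le 0$. Finally, the parenthetical statement about finite $(s,s')$-cop-win graphs is immediate from Theorem~\ref{copwin}, which identifies the class ${\mathcal C}{\mathcal W}{\mathcal F}{\mathcal R}(s,s')$ with the finite $(s,s')$-dismantlable graphs, so that such graphs fall under the hypothesis of the corollary.
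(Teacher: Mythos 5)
Your proof is correct and takes essentially the same route as the paper: reduce to the pair $(s,s-1)$, observe that $(s,s-1)$-dismantlability coincides with $(s,s-1)^*$-dismantlability, and apply Theorem~\ref{theorem-main} with $s-s'=1$. The only difference is that the paper dispatches the bridging equivalence with a one-line ``Notice that\dots'' whereas you spell out the argument (every short $(v,w)$-path blocked by $u$ forces $d(u,w)\le s-1$), which is exactly the intended justification.
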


\begin{proof} Since  $(s,s')$-dismantlable graphs are also $(s,s-1)$-dismantlable, it is enough to prove our result for $s'=s-1$. Notice that a graph $G$ is $(s,s-1)$-dismantlable if and only if $G$ is  $(s,s-1)^*$-dismantlable. From Theorem \ref{theorem-main} with  $s-s'=1$, we conclude that $G$ is $64s^2$-hyperbolic.
\end{proof}

%

We do not have examples of finite $(s,s-1)$-dismantlable graphs whose
hyperbolicity is quadratic in $s$ and leave this as an \emph{open
  question}.  However, if $s-s'=\Omega(s),$ from Theorem
\ref{theorem-main} we immediately obtain that $\frac{s^2}{s-s'}=O(s),$
thus $\delta$ is linear in $s$.

\begin{corollary} \label{cor2}
If a graph $G$ is $(s,s')^*$-dismantlable with $s-s' \geq ks$
for some constant $k > 0$, then $G$ is
$\frac{64s}{k}$-hyperbolic. Conversely, if $G$ is
$\delta$-hyperbolic, then $G$ is $(2r,r+2\delta)^*$-dismantlable for
any $r>0$.
\end{corollary}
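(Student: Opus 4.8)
The corollary splits into two independent implications, and each reduces directly to a result already in hand. For the forward implication the plan is to apply Theorem~\ref{theorem-main} and simplify its bound under the hypothesis $s-s'\ge ks$. First, $s-s'\ge ks>0$ forces $s'<s$, and since $s'\ge 1$ the hypotheses $0<s'<s$ of Theorem~\ref{theorem-main} are met, so $G$ is $\delta$-hyperbolic with $\delta\le 32\frac{s(s+s')}{s-s'}+\frac12$. Substituting $s+s'\le 2s$ and $\frac{1}{s-s'}\le\frac{1}{ks}$ gives $\delta\le\frac{64s}{k}+\frac12$. It then remains to absorb the additive $\frac12$ into the announced bound $\frac{64s}{k}$: from $ks\le s-s'\le s-1<s$ we get $k<1$, hence $\frac{32}{k}>32>\frac12$, and replacing the estimate $s+s'\le 2s$ by the sharper $s+s'\le 2s-1$ (valid for integers $s'<s$) yields $\delta\le\frac{64s-32}{k}+\frac12\le\frac{64s}{k}$, as claimed.

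For the converse the plan is to quote Corollary~\ref{cor-delta-implies-copwin}: for a $\delta$-hyperbolic graph and any integer $r\ge\delta$, every breadth-first-search order is a $(2r,r+2\delta)^*$-dismantling order, so $G$ is $(2r,r+2\delta)^*$-dismantlable. To obtain the full range $r>0$ stated here, I would observe that the underlying estimate, Proposition~\ref{prop-VB}, holds for every non-negative integer $r$ with no restriction relative to $\delta$. Concretely, fix a BFS root $x_0$ with associated order $\preceq$ and let $v\ne x_0$; for every $y\in B_{2r}(v)\cap X_v$ we have $d(y,x_0)\le d(v,x_0)$ (BFS property) and $d(v,y)\le 2r$, so Proposition~\ref{prop-VB} applied with $x=v$ and $z=x_0$ places the vertex $c\in I(v,x_0)$ with $d(v,c)=\min\{r,d(v,x_0)\}$ within distance $r+2\delta$ of $y$. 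Since $c$ is independent of $y$, this gives $B_{2r}(v)\cap X_v\subseteq B_{r+2\delta}(c)$, and as $c$ lies strictly closer to $x_0$ than $v$ it precedes $v$ in $\preceq$; thus $c$ eliminates $v$ and $\preceq$ is a $(2r,r+2\delta)^*$-dismantling order.

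There is no genuine obstacle in this corollary: both directions are bookkeeping on top of Theorem~\ref{theorem-main} and Corollary~\ref{cor-delta-implies-copwin}. The two points that warrant a line of care are the constant-chasing in the forward direction (absorbing the $\frac12$ using $k<1$ together with the integer estimate $s+s'\le 2s-1$) and the extension of the converse from $r\ge\delta$ to arbitrary $r>0$, which follows simply by reading Proposition~\ref{prop-VB} for general $r$ rather than only in the regime $r\ge\delta$.
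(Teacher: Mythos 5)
Your proposal is correct and takes essentially the same route as the paper, whose entire proof of this corollary consists of citing Theorem~\ref{theorem-main} for the first assertion and the BFS-dismantling consequence of Proposition~\ref{prop-VB} (i.e., Corollary~\ref{cor-delta-implies-copwin}) for the second. The details you add --- absorbing the additive $\frac{1}{2}$ via $k<1$ and the integer estimate $s+s'\le 2s-1$, and rerunning the BFS argument directly from Proposition~\ref{prop-VB} so that it covers all $r>0$ rather than only $r\ge\delta$ --- are precisely the bookkeeping that the paper's one-line proof leaves implicit.
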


\begin{proof}
The first assertion follows directly from Theorem~\ref{theorem-main}.
The second assertion follows from Proposition ~\ref{prop-relations-delta}.
\end{proof}

%
%

Similarly to Theorem~\ref{isoperimetric} that characterizes
hyperbolicity via linear isoperimetric inequality,
Corollary~\ref{cor2} characterizes hyperbolicity via
$(s,s')^*$-dismantlability.

\section{Weakly modular graphs}

In this section, we consider weakly modular graphs; for this
particular class of graphs, we obtain stronger results than in the
general case: namely, we show that for any $s' < s$, if a weakly
modular graph $G$ is $(s,s')$-dismantlable, then $G$ is
$O(s)$-hyperbolic.

Many classes of graphs occurring in metric graph theory and geometric
group theory (in relationship with combinatorial nonpositive curvature
property) are weakly modular: median graphs (alias, 1-skeletons of
CAT(0) cube complexes), bridged and weakly bridged graphs (1-skeletons
of systolic and weakly systolic complexes), bucolic graphs
(1-skeletons of bucolic complexes), Helly graphs (alias, absolute
retracts), and modular graphs.  For definitions and properties of
these classes of graphs the interested reader can read the survey
\cite{BaCh_survey} and the paper \cite{BrChChGoOs}.

A graph $G$ is {\it weakly modular}~\cite{BaCh_weak,Ch_metric} if it
satisfies the following triangle and quadrangle conditions:
\begin{itemize}
\item
{\it Triangle condition}:  for any three vertices $u,v,w$ with
$1=d(v,w)<d(u,v)=d(u,w)$ there exists a common neighbor $x$ of $v$
and $w$ such that $d(u,x)=d(u,v)-1.$
\item
{\it Quadrangle condition}: for any four vertices $u,v,w,z$ with
$d(v,z)=d(w,z)=1$ and  $2=d(v,w)\le d(u,v)=d(u,w)=d(u,z)-1,$ there
exists a common neighbor $x$ of $v$ and $w$ such that
$d(u,x)=d(u,v)-1.$
\end{itemize}

All metric triangles of weakly modular graphs are
equilateral. Moreover, they satisfy a stronger equilaterality
condition:

\begin{lemma} \cite{Ch_metric} \label{lem-weakly-modular} A graph $G$ is weakly modular if and only if for any metric triangle $v_1v_2v_3$ of $G$ and any two vertices
$x,y\in I(v_2,v_3),$ the equality  $d(v_1,x)=d(v_1,y)$ holds.
\end{lemma}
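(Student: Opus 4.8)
The plan is to prove both implications, using that metric triangles of weakly modular graphs are equilateral (recalled above) and that every triplet admits a quasi-median. Applying the stated equidistance condition to each of the three apices of a metric triangle $v_1v_2v_3$ forces $d(v_1,v_2)=d(v_1,v_3)$, $d(v_2,v_1)=d(v_2,v_3)$, and $d(v_3,v_1)=d(v_3,v_2)$, so the condition already entails equilaterality; I will therefore read the condition as: for every metric triangle $v_1v_2v_3$ of size $k$ and every $x\in I(v_2,v_3)$ one has $d(v_1,x)=k$.

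For the direction ``condition $\Rightarrow$ weakly modular'', I would verify the triangle and quadrangle conditions by passing to a quasi-median. Given $u,v,w$ with $1=d(v,w)<d(u,v)=d(u,w)=t$, take a quasi-median $v_1v_2v_3$ of the triplet $(u,v,w)$; writing $e:=d(v_2,v)=d(w,v_3)$ (equal by comparing the first and third quasi-median equalities and using equilaterality) and $m$ for the size, the middle equality gives $2e+m=d(v,w)=1$, hence $e=0$ and $m=1$. Thus $v_2=v$, $v_3=w$, the vertex $v_1$ is a common neighbor of $v$ and $w$, and the first equality yields $d(u,v_1)=t-1$, the required vertex. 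For the quadrangle condition, given $u,v,w,z$ with $d(v,z)=d(w,z)=1$, $d(v,w)=2$ and $d(u,v)=d(u,w)=d(u,z)-1=t$, the same computation for a quasi-median of $(u,v,w)$ gives $2e+m=2$, so $(e,m)\in\{(1,0),(0,2)\}$. In the case $(e,m)=(0,2)$ the triangle $v_1vw$ is metric of size $2$ and $z\in I(v,w)$, so the equidistance condition forces $d(v_1,z)=d(v_1,v)=2$ while $d(u,v_1)=t-2$, whence $d(u,z)\le t$, contradicting $d(u,z)=t+1$; hence only $(e,m)=(1,0)$ survives, giving a common neighbor $v_1=v_2=v_3$ of $v$ and $w$ with $d(u,v_1)=t-1$. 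This is the clean half of the argument.

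For the direction ``weakly modular $\Rightarrow$ condition'', fix a metric triangle $v_1v_2v_3$ of size $k$ and $x\in I(v_2,v_3)$; I must show $d(v_1,x)=k$. The lower obstruction comes for free: if $x\neq v_2,v_3$ then the metric triangle property gives $x\notin I(v_1,v_2)\cup I(v_1,v_3)$, which already forbids $d(v_1,x)$ from being too small relative to $d(v_2,x)$ and $d(v_3,x)$. The real content is ruling out $d(v_1,x)>k$, and here I would use the quadrangle condition as a descent: around a vertex of $I(v_2,v_3)$ where $d(v_1,\cdot)$ is locally maximal one finds two vertices of the interval at distance $2$ whose common neighbor is that vertex, and applying the quadrangle condition with apex $v_1$ produces a new vertex, still lying in $I(v_2,v_3)$ at the same interval-position but strictly closer to $v_1$. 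The prototype is $k=2$: for a midpoint $x$ of $v_2,v_3$, if $d(v_1,x)=3$ then the quadrangle condition applied to $(v_1,v_2,v_3,x)$ yields a common neighbor $x'$ of $v_2,v_3$ with $d(v_1,x')=1$, so $x'\in I(v_2,v_3)\cap I(v_1,v_2)$, contradicting that $v_1v_2v_3$ is a metric triangle.

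The main obstacle is organizing this descent into a clean induction in the general case: unlike the $k=2$ prototype, a single application of the quadrangle condition lowers the distance to $v_1$ by $2$ without yet reaching the metric lower bound, so I would run the argument as an induction on $k$, peeling the layer of the triangle nearest $v_1$ and invoking the triangle condition to produce a parallel metric triangle of size $k-1$ to which the induction hypothesis applies, and then verifying that all vertices produced by the triangle and quadrangle conditions remain inside $I(v_2,v_3)$. The bookkeeping ensuring that this reduction is well-founded and position-preserving, rather than any single application of the conditions, is where the difficulty concentrates.
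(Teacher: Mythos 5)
First, a point of comparison: the paper offers no proof of this lemma at all --- it is quoted from \cite{Ch_metric} --- so your proposal has to stand entirely on its own. Your converse direction (equidistance condition $\Rightarrow$ weak modularity) is complete and correct, and matches the standard argument: the quasi-median computation $2e+m=d(v,w)$ yields $e=0$, $m=1$ for the triangle condition, and for the quadrangle condition the case $(e,m)=(0,2)$ is correctly eliminated by applying the equidistance hypothesis to the size-$2$ quasi-median $v_1vw$ and the vertex $z\in I(v,w)$, giving $d(u,z)\le t$ against $d(u,z)=t+1$.

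The genuine gap is in the forward direction --- weak modularity $\Rightarrow$ equidistance --- which is exactly the implication the paper later uses (in Lemma~\ref{triangles}, hence in Proposition~\ref{weakly-modular-bounded}). Two concrete problems. (a) Your claim that ``the lower obstruction comes for free'' is false: for an interior $x\in I(v_2,v_3)$, the metric-triangle property only gives $x\notin I(v_1,v_2)\cup I(v_1,v_3)$, i.e.\ $d(v_1,x)\ge k+1-\min\{d(x,v_2),d(x,v_3)\}$, which at middle levels is about $k/2$, not $k$. (b) Because of (a), your descent has nothing to terminate against. The quadrangle step at a local maximum is itself fine (the new vertex does stay in $I(v_2,v_3)$ at the same level, as one checks from adjacency to the two neighbors at levels $a\pm 1$), but it produces a vertex at distance $d(v_1,x)-2$ from $v_1$, and an interior vertex of $I(v_2,v_3)$ with $d(v_1,\cdot)=k-1$ (or smaller) contradicts nothing: the contradiction you invoke in the $k=2$ prototype works only because there the new vertex lands at level $1$, adjacent to $v_2$, where the metric-triangle property applies. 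So for $k\ge 3$ neither the upper bound $d(v_1,x)\le k$ nor the lower bound $d(v_1,x)\ge k$ is established, and the induction ``peel a layer to get a parallel metric triangle of size $k-1$'' is precisely the content of Chepoi's lemma that remains unproved --- as you acknowledge yourself. A smaller issue: you also take equilaterality of metric triangles as an input; this is another unproved ingredient from the same citation (the paper does recall it as known, but a self-contained proof of the lemma would have to establish it, since it is the special case $x=v_2$, $y=v_3$ of what you are proving).
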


\medskip

The following result shows that in the case of $(s,s')$-dismantlable weakly modular graphs the hyperbolicity is always a linear function of $s$ for all values of $s$ and $s'<s$.

\begin{theorem}\label{weakly-modular}  If $G$ is an $(s,s')$-dismantlable weakly modular graph with $s'<s$, then $G$ is $184s$-hyperbolic.
\end{theorem}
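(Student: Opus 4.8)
We want to show that an $(s,s')$-dismantlable weakly modular graph with $s' < s$ is $O(s)$-hyperbolic. The general Theorem~\ref{theorem-main} only gives $O(s^2)$ hyperbolicity for $(s,s')^*$-dismantlable graphs, and the linear improvement in Corollary~\ref{cor2} required the gap condition $s - s' = \Omega(s)$. So for weakly modular graphs we need to exploit their combinatorial structure to avoid losing a factor of $s$, without assuming anything about the size of $s - s'$.

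**The strategy I would use.** The natural route is via Proposition~\ref{mu-nu}: a graph with $\nu$-thin intervals and metric triangles of side length at most $\mu$ is $(16\nu + 4\mu)$-hyperbolic. So the plan is to bound $\mu$ and $\nu$ linearly in $s$ for these graphs, then plug into that proposition. The two tasks are: (i) show that the sides of metric triangles are $O(s)$, and (ii) show that intervals are $O(s)$-thin. For (i), I would use the dismantling order together with the weak-modularity structure (Lemma~\ref{lem-weakly-modular}, which says that for a metric triangle $v_1v_2v_3$ and any $x,y \in I(v_2,v_3)$ one has $d(v_1,x) = d(v_1,y)$). Given a metric triangle, I would look at its largest vertex $v$ in the dismantling order and use the eliminating vertex $u$ to produce a contradiction if the side were too long — essentially arguing that a long metric triangle side, combined with the equilaterality/equidistance condition, lets $u$ shortcut a path of length $> 2s'$ between two points that are within the $s$-ball of $v$, violating the dismantling inequality $B_s(v, G) \cap X_v \subseteq B_{s'}(u, G)$ (or its non-starred version).

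**Carrying it out.** First I would pin down the maximal side length $\mu$ of a metric triangle. The key is that on a metric triangle all three sides are intervals $I(v_i,v_j)$ consisting of vertices metrically between the $v_i$, and Lemma~\ref{lem-weakly-modular} forces a strong rigidity: points equidistant along one side are equidistant from the opposite vertex. Taking the largest vertex $v$ of the triangle in the dismantling order and its eliminator $u$, I would select two vertices of the triangle within distance $s$ of $v$ on either side of $v$, conclude they lie within $s'$ of $u$, and then use the metric-triangle equalities to convert this into a bound forcing the triangle's size to be $O(s)$ — the equilaterality of metric triangles in weakly modular graphs (all three sides equal) is what makes a single bound on one side control the whole triangle. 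Then for interval thinness $\nu$: two vertices $x,y \in I(a,b)$ with $d(a,x)=d(a,y)$ and $d(b,x)=d(b,y)$ form, together with suitably chosen vertices, a configuration to which I can again apply the dismantling argument; taking the highest such vertex in $\preceq$ and its eliminator, the $(s,s')$-dismantling bound caps $d(x,y)$ by something linear in $s$. The final constant $184s$ should emerge by optimizing $16\nu + 4\mu$ with the explicit linear bounds on $\mu$ and $\nu$.

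**The main obstacle.** I expect the hard part to be bounding the metric-triangle side length $\mu$ linearly in $s$, because the dismantling order gives control only locally (one vertex eliminated by another within $s'$), whereas a metric triangle is a global object that may be long. The crux is to leverage weak modularity — specifically the equidistance Lemma~\ref{lem-weakly-modular} and the fact that metric triangles are equilateral — to turn local dismantling information at the $\preceq$-maximal vertex of the triangle into a global length bound: one must show that if a side were much longer than $O(s)$, the eliminator $u$ of the top vertex would be forced (via the triangle and quadrangle conditions propagating distances) to shortcut a geodesic further than $s'$ allows, contradicting dismantlability. Managing the interaction between the well-order, the $s$-ball constraint, and the rigidity of geodesics in weakly modular graphs is where the real work lies; the interval-thinness bound $\nu$ should then follow by a similar but more routine application of the same ideas.
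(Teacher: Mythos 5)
Your high-level plan coincides with the paper's: reduce to $s'=s-1$, bound the side length $\mu$ of metric triangles and the interval thinness $\nu$ linearly in $s$, and feed both into Proposition~\ref{mu-nu} (the constant indeed arises as $16\nu+4\mu$ with $\mu=6s$ and $\nu=4s+\mu=10s$). But the mechanism you propose for the key step --- bounding $\mu$ --- has a genuine gap, and it sits exactly at the point you yourself flag as ``the main obstacle.'' You suggest taking the $\preceq$-largest vertex $v$ of the metric triangle, its eliminator $u$, and ``two vertices of the triangle within distance $s$ of $v$ on either side of $v$.'' This yields a contradiction only when those two vertices lie on a common geodesic through $v$, so that their distance is forced to equal $2s>2s'$. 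If $v$ lies at or near a corner of the triangle --- which you cannot exclude, since the well-order is arbitrary --- the two chosen points sit on two \emph{different} sides, the concatenation through the corner is not a geodesic, and nothing forces $d(x,y)$ to be large: Lemma~\ref{lem-weakly-modular} only gives equidistance from the \emph{opposite} corner. Concretely, in the equilateral-triangular grid (a bridged, hence weakly modular graph), a grid-aligned equilateral triangle of arbitrary size $k$ is a metric triangle, and two points at distance $s$ from a common corner on its two incident sides are at distance $s$, not $2s$; so the conclusion $d(x,y)\le 2s'$ contradicts nothing. The same defect undermines your sketch for interval thinness, which is not ``routine'': the $\preceq$-maximal vertex of whatever finite configuration you build can again fall where no single geodesic of the configuration extends $s$ in both directions.

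The paper closes this gap with a device absent from your proposal. It introduces \emph{locally $s$-isometric subgraphs} and \emph{$s$-geodesically covered cycles} (cycles covered by geodesics of $G$ in which consecutive covering geodesics overlap along subpaths of length $\ge 2s$), and shows (Lemmas~\ref{locally-isometric} and~\ref{no-cycles}) that an $(s,s-1)$-dismantlable graph contains none: in such a configuration \emph{every} vertex, in particular the $\preceq$-largest one, lies in the middle of some covering geodesic with $\ge s$ on each side, so its eliminator is forced to shortcut a geodesic of length $2s$ (with a separate case when the eliminator lies on that geodesic, which your sketch also ignores for the non-starred dismantling). To apply this to a putative metric triangle with sides $\ge 6s$, one must \emph{manufacture} such a cycle, and this is where weak modularity really enters: Lemma~\ref{triangles} lets one erect at each corner an auxiliary equilateral metric triangle of side exactly $2s$, and the resulting six (or five, depending on the size of a quasi-median near one corner) geodesics pairwise overlap in segments of length $\ge 2s$, forming an $s$-geodesically covered cycle --- contradiction (Proposition~\ref{weakly-modular-bounded}). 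Interval thinness is then obtained by the same cycle argument applied to quasi-medians of $\{u,x,y\}$ and $\{v,x,y\}$ (Proposition~\ref{mu-bounded}). Without this ``every vertex is deep inside some covering geodesic'' construction, or an equivalent substitute, the local-to-global passage your outline defers to remains unproven.
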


\begin{proof}
Since  $(s,s')$-dismantlable graphs are $(s,s-1)$-dismantlable, it is
enough to prove our result for $s'=s-1$.  We will establish our result
in two steps. First, we show that if all metric triangles of an
$(s,s-1)$-dismantlable graph $G$  are $\mu$-{\it bounded} (i.e., have
sides of length at most $\mu$), then all intervals of $G$ are $(4s+2\mu)$-thin. In the second step, we show that in  $(s,s-1)$-dismantlable weakly modular graphs  all metric triangles are $6s$-bounded.

We continue with some properties of general $(s,s-1)$-dismantlable
graphs. A subgraph $H=(V',E')$ of a graph $G$ is called a {\it locally
  $s$-isometric subgraph} of $G$ if $H$ contains a collection
$\mathcal P$ of geodesics of $G$ such that for any vertex $v$ of $H$
there exists a geodesic $P\in {\mathcal P}$ passing via $v$ and such
that the distances $d(v,x),d(v,y)$ in $G$ between $v$ and the
endvertices $x,y$ of $P$ are at least $s$. Note that this implies that
for every vertex $v \in V(H)$, there exists a subgeodesic $P_v$ of a
geodesic $P \in {\mathcal P}$ containing $v$ such that $d(v,x) =
d(v,y) = s$ in $G$.

\begin{lemma} \label{locally-isometric} 
If a graph $G$ is $(s,s-1)$-dismantlable, then $G$ does not contain
finite locally $s$-isometric subgraphs.
\end{lemma}

\begin{proof} Suppose by way of contradiction that $G$ contains a finite locally $s$-isometric subgraph $H=(V',E').$ Let $\preceq$ be an $(s,s-1)$-dismantling well-order of the vertex-set $V$ of $G$ and let $v$ be the largest  element of the set $V'$ in this order. Let $u$ be a vertex of $G$ that eliminates $v$ in $\preceq$. Since $H$ is locally $s$-isometric, $H$ contains a geodesic $P$ of $G$ of length $2s$ passing via $v$ such that $d(v,x)=d(v,y)=s$, where $x$ and $y$ are the endvertices of $P$. If $u\in P,$ say $u$ belongs to the subpath $P'$ of $P$ comprised between $v$ and $x,$ then the subpath $P''$ of $P$ between $v$ and $y$ is completely contained in $B_s(v,G-\{ u\})\cap X_v$. From the choice of $u$, we have $P''\subseteq B_{s-1}(u,G)$. Hence $d(u,y)\le s-1,$ which is impossible because $P$ is a geodesic of length $2s$ passing via $u$ and $u\in P'$. So, let $u\notin P$. Then $P$ is completely contained in $B_s(v,G-\{ u\})\cap X_v$, whence $P\subseteq B_{s-1}(u,G)$. In particular, $d(u,x)\le s-1$ and $d(u,y)\le s-1$. Since $d(x,y)=2s,$ we again obtain a contradiction.
\end{proof}

We will say that a cycle $c$ of $G$ is  \emph{$s$-geodesically
  covered} if there exists a set ${\mathcal P}=\{
P_0,P_1,\ldots,P_{n-1}\}$ of geodesics of $G$ such that (i) each $P_i$
is a subpath of $c$, (ii) each edge of $c$ is contained in a geodesic
of $\mathcal P$, (iii) if $P_i$ and $P_j$ are not consecutive (modulo
$n$), then $P_i$ and $P_j$ are edge-disjoint, and (iv) if $P_i$ and
$P_j$ are consecutive (i.e., $j=i+1 \mod n$), then $P_i\cap P_{j}$ is a path of length $\ge 2s$.

\begin{lemma} \label{no-cycles}  
If a graph $G$ contains a $s$-geodesically covered cycle, then it
contains a finite locally $s$-isometric subgraph. 

In particular, if a graph $G$ is $(s,s-1)$-dismantlable, then $G$ does
not contain $s$-geodesically covered cycles.
\end{lemma}

\begin{proof} Let $c$ be a 
  $s$-geodesically covered cycle of $G$ and let ${\mathcal P}=\{
  P_0,P_1,\ldots,P_{n-1}\}$ be the corresponding set of geodesics
  satisfying conditions (i)-(iv).  Fix a cyclic traversal of $c$.  For
  each $0\le i\le n-1$, let $x_i$ and $y_i$ be the end-vertices of
  $P_i$ labeled in such a way that following the traversal, $x_i$ and
  $y_i$ are the first and the last vertices of $P_i$.  

  Then $P_i\cap P_{i+1}$ is a geodesic between $x_{i+1}$ and $y_i$ (as
  a subgeodesic of $P_i$ and $P_{i+1}$).  By condition (iv), its
  length is at least $2s,$ whence $d(x_{i+1},y_i)\ge 2s.$ Analogously,
  $d(x_{i+2},y_{i+1})\ge 2s.$ On the other hand, since by (iii)
  $P_i\cap P_{i+2}$ does not contain edges, either $y_i=x_{i+2}$ or
  $x_{i+2}$ is located between $y_i$ and $y_{i+1}$.  Therefore, since
  $y_i$ and $x_{i+2}$ belong to the geodesic $P_{i+1}$, we obtain
  $d(y_i,y_{i+1})=d(y_i,x_{i+2})+d(x_{i+2},y_{i+1})\ge 2s.$

Pick any vertex $v$ of $c$. Since $c$ is covered by the geodesics of
$\mathcal P$, there exists at least one such geodesic that contains
$v$. Let $P_{i_0}$ be a geodesic of $\mathcal P$ selected in a such a
way that $v\in P_{i_0}$ and $k:=\min\{ d(v,x_{i_0}),d(v,y_{i_0})\}$ is
as large as possible. If $k <s$, assume without loss of generality
that $d(v,y_{i_0})<s$. By condition (iv) applied to the geodesics
$P_{i_0}$ and $P_{{i_0}+1},$ we conclude that
$d(x_{{i_0}+1},y_{i_0})\ge 2s.$ Since $d(v,y_{i_0})<s,$ necessarily
$v\in P_{{i_0}+1}$ and $d(v,x_{{i_0}+1})>s.$ On the other hand, since
$d(y_{i_0},y_{{i_0}+1})\ge 2s$ and $v$ is located on the geodesic
$P_{{i_0}+1}$ between $x_{{i_0}+1}$ and $y_{i_0}$, necessarily
$d(v,y_{{i_0}+1})\ge 2s,$ contrary to the choice of $P_{i_0}$ as the
path of $\mathcal P$ containing $v$ and maximizing $\min\{
d(v,x_{i_0}),d(v,y_{i_0})\}.$ Thus, $k\ge s$ for every choice of
$v$. Consequently, $c$ is a locally $s$-isometric subgraph of $G$,
establishing the first assertion of the lemma.  The second assertion
follows directly from Lemma~\ref{locally-isometric}.
\end{proof}

\begin{proposition} \label{mu-bounded} If a graph $G$ is $(s,s-1)$-dismantlable and the metric triangles of $G$ have sides of length at most $\mu$, then the intervals of $G$ are $(4s+2\mu)$-thin and $G$ is $(64s+36\mu)$-hyperbolic. If, additionally, $G$ is weakly modular, then the intervals of $G$ are $(4s+\mu)$-thin and $G$ is $(64s+20\mu)$-hyperbolic.
\end{proposition}

\begin{figure}[t]
\centering\includegraphics[scale=0.85]{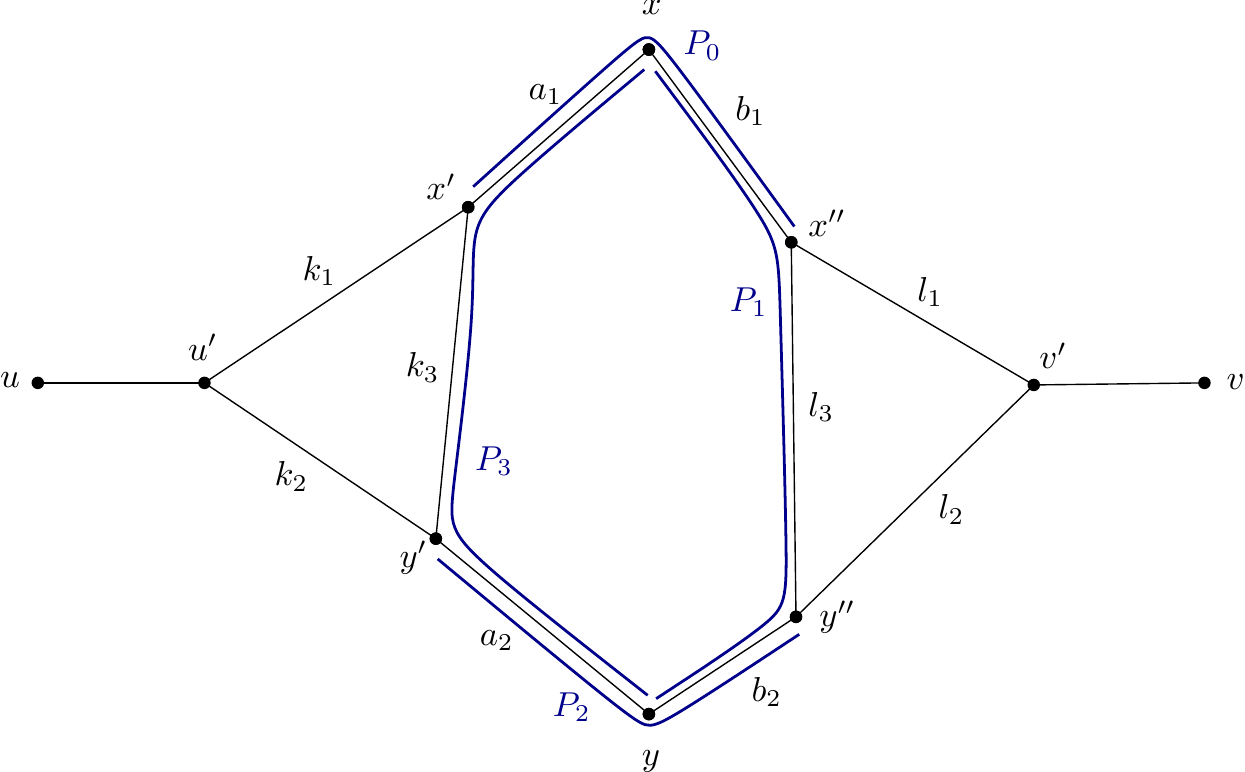}
\caption{To the proof of Proposition \ref{mu-bounded}}
\label{fig-bounded-interval}
\end{figure}

\begin{proof} Let $I(u,v)$ be an interval of $G$ and $x,y\in I(u,v)$
  such that $d(u,x)=d(u,y)=k, d(v,x)=d(v,y)=l$ and $l+k=d(u,v).$ Let
  $u'x'y'$ be a quasi-median of the triplet $u,x,y$ and let $v'x''y''$
  be a quasi-median of the triplet $v,x,y$ (see
  Figure~\ref{fig-bounded-interval}). Let $k_1=d(u',x'), k_2=d(u',y'),
  k_3=d(x',y'),$ and $a_1=k-k_1-d(u,u'), a_2=k-k_2-d(u,u').$
  Analogously, let $l_1=d(v',x''), l_2=d(v',y''), l_3=d(x'',y''),$ and
  $b_1=l-l_1-d(v,v'), b_2=l-l_2-d(v,v').$ Since the metric triangles
  of $G$ are $\mu$-bounded, each of $k_1,k_2,k_3,l_1,l_2,l_3$ is at
  most $\mu$.

Now, suppose that each of $a_1,a_2,b_1,b_2$ is at least $2s$. Let $c$
be a cycle consisting of a geodesic $R_1$ between $x$ and $x',$
followed by a geodesic $R_2$ between $x'$ and $y'$, a geodesic $R_3$
between $y',y$, a geodesic $Q_3$ between $y$ and $y'',$ a geodesic
$Q_2$ between $y''$ and $x'',$ and finally, a geodesic $Q_1$ between
$x''$ and $x$.  From the definition of quasi-medians it follows that
$P_1:=R_1\cup R_2\cup R_3$ and $P_3:=Q_1\cup Q_2\cup Q_3$ are
geodesics between $x$ and $y$. Since $x\in I(u,v)$, $x'\in I(u,x)$ and
$x''\in I(x,v)$, we also conclude that $P_0:=R_1\cup Q_1$ is a
geodesic between $x'$ and $x''$. Analogously, $P_2:=R_3\cup Q_3$ is a
geodesic between $y'$ and $y''$.  Since each pair of (circularly)
consecutive paths $P_0,P_1,P_2,P_3$ intersect along a path of length
at least $2s$ and any two nonconsecutive paths do not share common
edges, the set $P_0,P_1,P_2,P_3$ constitutes an $s$-geodesic covering
of $c$, but this is impossible by Lemma \ref{no-cycles}. This
contradiction shows that $\min\{ a_1,a_2,b_1,b_2\}<2s.$

Suppose without loss of generality that $a_1<2s.$ Since $k_1+a_1=k_2+a_2$ and $k_1,k_2\le \mu$, we obtain that $a_2\le a_1+k_1\le 2s+\mu.$ Since $x',y'$ belong to a common geodesic between $x$ and $y,$ we obtain that $d(x,y)=d(x,x')+d(x',y')+d(y',y)=a_1+k_3+a_2\le 2s+\mu+2s+\mu=4s+2\mu.$ Thus the intervals of $G$ are $(4s+2\mu)$-thin. Proposition \ref{mu-nu} shows that $G$ is $(64s+36\mu)$-hyperbolic.

If $G$ is weakly modular, then all metric triangles of $G$ are equilateral, whence $k_1=k_2=k_3$ and $a_1=a_2.$ This shows that $d(x,y)=a_1+k_3+a_2\le 4s+\mu$. Hence the intervals of $G$ are $(4s+\mu)$-thin and $G$ is $(64s+20\mu)$-hyperbolic.
\end{proof}

Next we will prove that the metric triangles of $(s,s-1)$-dismantlable weakly modular graphs are $6s$-bounded.

\begin{lemma} \label{triangles} Let $uvw$ be a metric triangle of a weakly modular graph $G$. For any vertex $x\in I(u,w)$ at distance $p$ from $u$ there exists a vertex $y\in I(u,v)$
at distance $p$ from $u$ and $x$.
\end{lemma}

\begin{proof} Let $u'v'x'$ be a quasi-median of the triplet $u,v,x$. Since $uvw$ is a metric triangle, $I(u,v)\cap I(u,w)=\{ u\}.$ Since $I(u,x)\subseteq I(u,w),$ necessarily $I(u,v)\cap I(u,x)=\{ u\},$ i.e., $u'=u.$ We also claim that $x=x'$. Since $x'\in I(u,x)\cap I(x,v),$ if $x\ne x'$, two different vertices $x$ and $x'$ of $I(u,w)$ will have different distances from $v$, contrary to Lemma \ref{lem-weakly-modular}. So, $x'=x$. Since $v'ux$ is an equilateral metric triangle, $d(v',x)=d(v',u)=d(u,x)=p$ and we are done.
\end{proof}

\begin{proposition} \label{weakly-modular-bounded} If $G$ is an
  $(s,s-1)$-dismantlable weakly modular graph, then the metric
  triangles of $G$ are $6s$-bounded.
\end{proposition}

\begin{proof} Suppose by way of contradiction that $G$ contains a
  metric triangle $uvw$ with sides of length $\ge 6s$. Let $x_u$ be a
  vertex of $I(u,w)$ located at distance $2s$ from $u$. Let $y_u$ be a
  vertex of $I(u,v)$ located at distance $2s$ from $u$ and $x_u$ (such
  a vertex exists by Lemma \ref{triangles}). Let $x_v$ be a vertex of
  $I(v,y_u)$ at distance $2s$ from $v$ and let $y_v$ be a vertex of
  $I(v,w)$ at distance $2s$ from $x_v$ and $v$ (again, this vertex is
  provided by Lemma \ref{triangles}). Finally, let $x_wy_ww$ be a
  quasi-median of the triplet $y_v,x_u,w$. Denote by $k$ the length of
  the sides of the metric triangle $x_wy_ww$. We distinguish two cases
  depending of the value of $k$.

\begin{figure}[t]
\centering\includegraphics[scale=0.85]{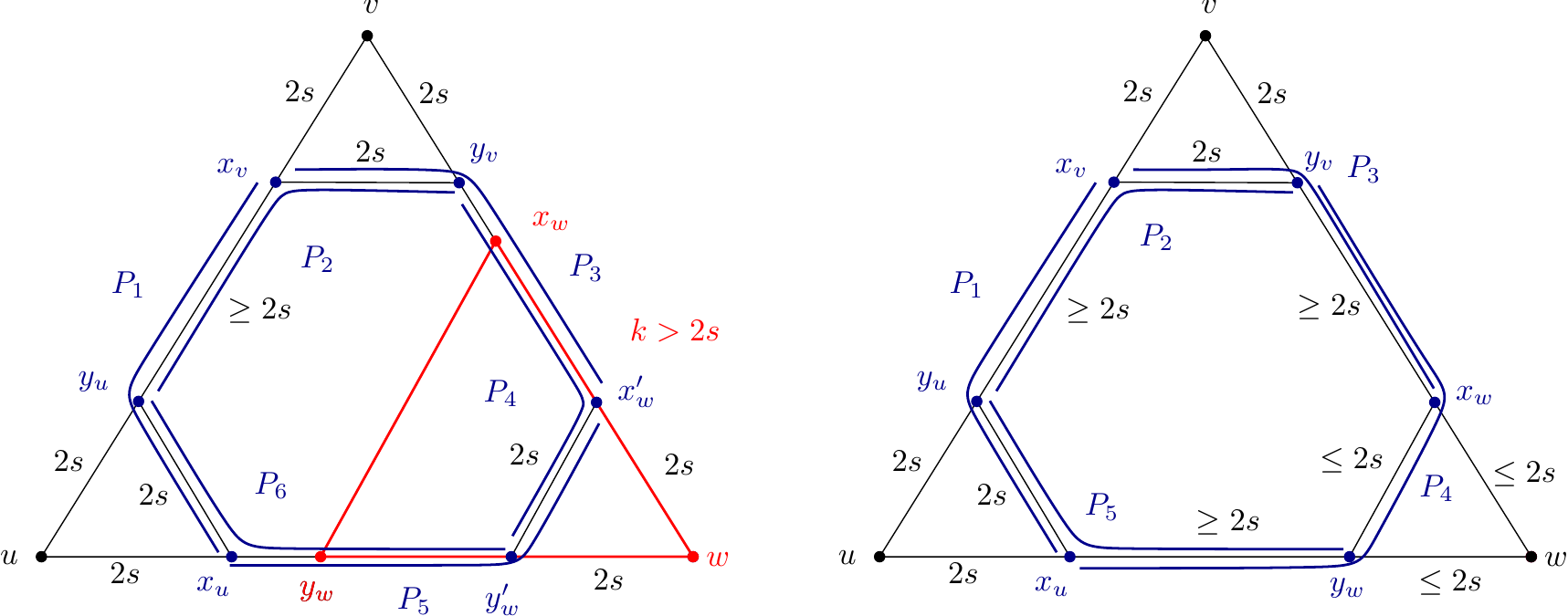}
\caption{Cases 1 and 2 of the proof of Proposition \ref{weakly-modular-bounded}}
\label{figure1}
\end{figure}

\medskip\noindent
{\bf Case 1:} $k>2s$ (Fig.~\ref{figure1} left).

\medskip
Let $x'_w$ be a vertex of $I(w,x_w)$ at distance $2s$ from $w$ and let
$y'_w$ be a vertex of $I(w,y_w)$ located at distance $2s$ from $w$ and
$x'_w$ (provided by Lemma \ref{triangles}). Denote by $P_1$ a geodesic
of $G$ between $x_u$ and $x_v$ passing via $y_u$ (such a geodesic
exists because $y_u\in I(u,v)\cap I(x_u,v)$ and $x_v\in
I(y_u,v)$). Let $P_2$ be a geodesic between $y_u$ and $y_v$ passing
via $x_v$, $P_3$ be a geodesic between $x_v$ and $x'_w$ passing via
$y_v,$ $P_4$ be a geodesic between $y_v$ and $y'_w$ passing via
$x'_w,$ $P_5$ be a geodesic between $x'_w$ and $x_u$ passing via
$y'_w$, and $P_6$ be a geodesic between $y'_w$ and $y_u$ passing via
$x_u$ (the existence of these geodesics follows from the way the
vertices $x_v,y_v,x'_w,y'_w,x_u$ have been selected and is similar to
the proof of existence of $P_1$). Let $c$ be the cycle of $G$ defined
as the union of these six geodesics. Since any two consecutive
geodesics intersect along a path of length at least $2s$ (because the
length of the sides of $uvw$ is at least $6s$ and the sides of the
metric triangles $ux_uy_u$, $x_vvy_v$, and $x'_wwy'_w$ have length
$2s$) and any two nonconsecutive geodesics are disjoint or intersect
in a single vertex, the cycle $c$ is $s$-geodesically covered by
$P_1,\ldots,P_6$, leading to a contradiction with Lemma
\ref{no-cycles}.

\medskip\noindent
{\bf Case 2:} $k\le 2s$ (Fig~\ref{figure1} right).

\medskip
In this case, we define the following five geodesics: $P_1$ is a geodesic between $x_u$ and $x_v$ passing via $y_u$, $P_2$ is a geodesic between $y_u$ and $y_v$ passing via $x_v$, $P_3$ is a geodesic between $x_v$ and $x_w$ passing via $y_v,$ $P_4$ is a geodesic between $y_v$ and $x_u$ passing via $x_w$ and $y_w$, and $P_5$ is a geodesic between $y_w$ and $y_u$ passing via $x_u$. The proof of existence of geodesics $P_1,P_2,P_3,$ and $P_5$ is the same as in Case 1. The existence of $P_4$ follows from the fact that $x_wy_ww$ is a quasi-median of the triplet $y_v,x_u,w$. Again, any two consecutive geodesics intersect along a path of length at least $2s$ while two nonconsecutive geodesics either are disjoint or intersect in a single vertex. Thus the cycle $c$, which is the union of these five geodesics, is $s$-geodesically covered by them, contrary to Lemma \ref{no-cycles}.

In both cases, the assumption that $G$ contains a metric triangle with
sides of length $\ge 6s$ leads us to a contradiction. Thus all metric
triangles of $G$ are $6s$-bounded.
\end{proof}

Propositions \ref{mu-bounded} and \ref{weakly-modular-bounded} imply that if a weakly modular graph $G$ is $(s,s-1)$-dismantlable, then $G$ is $184s$-hyperbolic. This completes the proof of Theorem \ref{weakly-modular}.
\end{proof}

Theorem \ref{weakly-modular} can be sharpened in case of median and
Helly graphs that are important subclasses of weakly-modular
graphs~\cite{BaCh_survey}. A graph $G=(V,E)$ is \emph{median} if for
all vertices $u,v,w \in V$, there exists a unique vertex $m$ in the
intersection $I(u,v) \cap I(u,w) \cap I(v,w)$. A graph $G$ is
\emph{Helly} if for any family $\cB$ of balls of $G$, the intersection
$\cap\{B \in \cB\}$ is non-empty if and only if $B \cap B' \neq
\emptyset$ for all $B, B' \in \cB$.

\begin{corollary} Let $G$ be an $(s,s-1)$-dismantlable graph. If $G$ is a median graph, then $G$ is $2s$-hyperbolic. If $G$ is a Helly graph, then $G$ is $(64s+20)$-hyperbolic.
\end{corollary}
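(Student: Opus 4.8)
The plan is to treat both cases through the machinery of Section~4, since median graphs and Helly graphs are both weakly modular. By Proposition~\ref{mu-bounded}, once I know that the metric triangles of an $(s,s-1)$-dismantlable weakly modular graph $G$ are $\mu$-bounded, I get that $G$ is $(64s+20\mu)$-hyperbolic. So the heart of each case is to pin down the correct value of $\mu$, and in the median case to replace the generic factor $16$ of Proposition~\ref{mu-nu} by the sharp constant $\tfrac12$.

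\medskip\noindent\textbf{Median case.} First I would show that all metric triangles of a median graph are trivial, i.e. $\mu=0$: if $v_1v_2v_3$ is a metric triangle, its median $m\in I(v_1,v_2)\cap I(v_1,v_3)\cap I(v_2,v_3)$ lies in each pairwise interval, so $m=v_1=v_2=v_3$ and the triangle has size $0$. Plugging $\mu=0$ into Proposition~\ref{mu-bounded} (weakly modular case) shows that all intervals of $G$ are $\nu$-thin with $\nu=4s$. The remaining, and main, step is the sharp inequality \emph{median $+$ $\nu$-thin intervals $\Rightarrow$ $G$ is $\tfrac{\nu}{2}$-hyperbolic}, which together with $\nu=4s$ yields $2s$-hyperbolicity. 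To prove it I would verify the four-point condition directly, using the isometric embedding of a median graph into a hypercube (so that $d$ is the Hamming/$\ell_1$ distance and intervals are gated with gate $\mathrm{med}(\cdot,\cdot,\cdot)$). Given four vertices $A,B,C,D$ with $d(A,B)+d(C,D)$ the largest of the three distance sums, decomposing the coordinate set by the $\Theta$-classes shows that the four-point defect equals $2(q-p)$, where $p,q,r$ count the three ``parallel-class'' types; I then take the gates $C'=\mathrm{med}(A,B,C)$ and $D'=\mathrm{med}(A,B,D)$ of $C,D$ onto $I(A,B)$, slide one of them along $I(A,B)$ by $|d(A,C')-d(A,D')|$ steps to make the two points equidistant from $A$ (hence from $B$), and apply $\nu$-thinness of $I(A,B)$ to the resulting equidistant pair. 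This bounds the defect by $\nu$, giving $\delta\le\nu/2=2s$.

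\medskip\noindent\textbf{Helly case.} Here I would show $\mu\le 1$, i.e. that metric triangles of a Helly graph have sides of length at most $1$. Suppose $v_1v_2v_3$ is an (equilateral) metric triangle of size $k\ge 2$, and consider the family $\cB$ of balls $B_{\lceil k/2\rceil}(v_1),B_{\lfloor k/2\rfloor}(v_2),B_{\lceil k/2\rceil}(v_3)$. Since $\lceil k/2\rceil+\lfloor k/2\rfloor=k=d(v_i,v_j)$ and $2\lceil k/2\rceil\ge k$, these balls pairwise intersect, so by the Helly property there is a common vertex $m$. Comparing the radius sums forces $d(m,v_1)=\lceil k/2\rceil$, $d(m,v_2)=\lfloor k/2\rfloor\ge 1$, and $m\in I(v_1,v_2)\cap I(v_2,v_3)$, contradicting $I(v_1,v_2)\cap I(v_2,v_3)=\{v_2\}$. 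Hence $\mu\le 1$, and Proposition~\ref{mu-bounded} gives that $G$ is $(64s+20)$-hyperbolic.

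\medskip The routine inputs (weak modularity of both classes, equilaterality of metric triangles via Lemma~\ref{lem-weakly-modular}, and the ball argument above) are straightforward. I expect the main obstacle to be the median step: upgrading the relation between interval-thinness and hyperbolicity from the generic constant $16$ of Proposition~\ref{mu-nu} to the optimal $\tfrac12$. The delicate points there are justifying that $\mathrm{med}(A,B,C)$ is the gate of $C$ onto $I(A,B)$ and controlling the change in distance when sliding $D'$ to equalize the distances to $A$, so that the equidistant pair one feeds into $\nu$-thinness still certifies the full four-point defect.
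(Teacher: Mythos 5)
Your proposal is correct, but in the median case it takes a genuinely different route from the paper. The paper does not go through interval thinness there at all: it invokes the cited characterization (Chepoi--Dragan--Estellon--Habib--Vax\`es, Hagen) that a median graph is $\delta$-hyperbolic if and only if it contains no isometric $\delta\times\delta$ square grid, and then rules out an isometric $2s\times 2s$ grid in an $(s,s-1)$-dismantlable graph by observing that the four boundary geodesics of such a grid would form an $s$-geodesically covered cycle, contradicting Lemma~\ref{no-cycles}. Your route instead notes $\mu=0$ (correct: the median of the three corners of a metric triangle lies in all three pairwise intervals, forcing the triangle to degenerate), gets $4s$-thin intervals from Proposition~\ref{mu-bounded}, and then replaces the generic constant of Proposition~\ref{mu-nu} by the sharp median-specific bound ``$\nu$-thin intervals $\Rightarrow$ $\tfrac{\nu}{2}$-hyperbolic''. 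That key step, where all the work sits, does go through: writing $p,q,r$ for the numbers of $\Theta$-classes inducing the splits $\{A,B\}|\{C,D\}$, $\{A,C\}|\{B,D\}$, $\{A,D\}|\{B,C\}$, with $AB|CD$ the largest sum and (after possibly swapping $C,D$) $q\le r$, the defect is $2(q-p)$, the medians $C'=\mathrm{med}(A,B,C)$ and $D'=\mathrm{med}(A,B,D)$ lie on $I(A,B)$ with $d(C',D')=q+r$ and $d(A,C')-d(A,D')=r-q$; sliding $C'$ toward $A$ by $r-q$ steps inside $I(A,B)$ yields a pair equidistant from $A$ and $B$ at distance at least $(q+r)-(r-q)=2q\ge 2(q-p)$, which $\nu$-thinness bounds by $\nu$, giving $\delta\le \nu/2=2s$. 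What each approach buys: the paper's median proof is much shorter but leans on an external structural theorem about median graphs; yours is self-contained modulo standard hypercube-embedding/$\Theta$-class machinery and in fact establishes a sharp statement of independent interest. In the Helly case the two arguments are essentially identical: both derive $\mu\le 1$ by applying the Helly property to three pairwise-intersecting balls centered at the corners of a hypothetical metric triangle of size $k\ge 2$ (the paper uses radii $1,k-1,k-1$; you use $\lceil k/2\rceil,\lfloor k/2\rfloor,\lceil k/2\rceil$) and then invoke Proposition~\ref{mu-bounded} to get $(64s+20)$-hyperbolicity.
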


\begin{proof}
It is known \cite{ChDrEsHaVa,Ha} that a median graph $G$ is
$\delta$-hyperbolic if and only if $G$ does not contain square
$\delta\times \delta$ grids as isometric subgraphs. To obtain our
result is suffices to show that if a graph $G$ is
$(s,s-1)$-dismantlable, then $G$ does not contain square grids of size
$2s\times 2s$ as isometric subgraphs.  Suppose by way of contradiction
that $G$ contains an isometric $2s\times 2s$ grid $H$; denote the
boundary cycle of $H$ by $c$. Let $u,v,w,x$ be the four corners of
$H$. Let $P_0$ be the $(u,w)$-geodesic of $H$ passing via $v,$ $P_1$
be the $(v,x)$-geodesic of $H$ passing via $w$, $P_2$ be the
$(w,u)$-geodesic of $H$ passing via $x,$ and $P_3$ be the
$(x,v)$-geodesic of $H$ passing via $u$. These four geodesics show
that $c$ is a $s$-geodesically covered cycle, contrary to Lemma
\ref{no-cycles}.  This establishes the first assertion.

Now, let $G$ be an $(s,s-1)$-dismantlable Helly graph. Then $G$ is
weakly modular \cite{BaCh_survey}. We assert that all metric triangles
of $G$ have sides of length at most 1. Indeed, if $uvw$ is a metric
triangle with sides of length $k>1,$ consider the following three
pairwise intersecting balls: $B_1(u), B_{k-1}(v),$ and $B_{k-1}(w).$
By Helly property, they have a common vertex $u'$. But then $u'\in
I(u,v)\cap I(u,w)$ and $u'\ne u$, because $d(u,v)=k$ and $u'\in
B_{k-1}(v),$ contrary to the assumption that $uvw$ is a metric
triangle. Hence $k\le 1.$ By Proposition \ref{mu-bounded}, $G$ is
$(64s+20)$-hyperbolic.
\end{proof}


\section{Algorithmic consequences}

The {\it hyperbolicity} $\delta^*$ of a metric space $(X,d)$ (or of a
non-necessarily finite graph $G$) is the least value of $\delta$ for
which $(X,d)$ (resp., $G$) is $\delta$-hyperbolic.  By a remark of
Gromov \cite{Gr}, if the four-point condition in the definition of
hyperbolicity holds for a fixed base-point $u$ and any triplet $x,y,v$
of $X$, then the metric space $(X,d)$ is $2\delta$-hyperbolic. This
provides a factor 2 approximation of hyperbolicity of a metric space
on $n$ points running in cubic $O(n^3)$ time. Using fast algorithms
for computing (max,min)-matrix products, it was noticed in
\cite{FouIsVi} that this 2-approximation of hyperbolicity can be
implemented in $O(n^{2.69})$ time. In the same paper, it is shown that
any algorithm computing the hyperbolicity for a fixed base-point in
time $O(n^{2.05})$ would provide an algorithm for $(\max,\min)$-matrix
multiplication faster than the existing ones.  In~\cite{Duan},
approximation algorithms are given to compute a
$(1+\epsilon)$-approximation in $O(\epsilon^{-1}n^{3.38})$ time and a
$(2+\epsilon)$-approximation in $O(\epsilon^{-1}n^{2.38})$ time.  For
a practical motivation of a fast computation or approximation of
hyperbolicity of large graphs and an experimental study, see
\cite{CoCoLa}.

Gromov gave an algorithm to recognize Cayley graphs of hyperbolic
groups and estimate the hyperbolicity constant $\delta $. His
algorithm is based on the theorem that hyperbolicity ``propagates'',
i.e. if balls of an appropriate fixed radius are hyperbolic for a
given $\delta $ then the whole space is $\delta '$-hyperbolic for some
$\delta '>\delta $ (see \cite{Gr}, 6.6.F). More precisely, for simply
connected\footnote{Recall that a topological space $X$ is \emph{simply
    connected} if it is path-connected (i.e., for all points $x,y \in
  X$, there exists a path from $x$ to $y$ in $X$) and every loop is
  null-homotopic (i.e., can be continuously deformed to a point).}
geodesic spaces, hyperbolicity can be characterized in the following
local-to-global way:

\begin{theorem}\cite{Gr}\label{Gromov}  Given $\delta >0$, let $R = 10^5\delta $ and $\delta'=200\delta $. Let  $(X,d)$ be a simply connected geodesic metric space in which each loop of length
$<100\delta$ is null-homotopic inside a ball of diameter $<200\delta$. If every ball $B_R(x_0)$ of $X$ is $\delta $-hyperbolic, then $X$
is $\delta'$-hyperbolic.
\end{theorem}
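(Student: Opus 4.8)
The plan is to derive a \emph{global} linear isoperimetric inequality from the hypothesis of \emph{local} hyperbolicity, and then to invoke the converse direction of Theorem~\ref{isoperimetric} (equivalently Proposition~\ref{prop-isoperimetric-implies-hyperbolic}) to conclude that $X$ is $\delta'$-hyperbolic. Since the isoperimetric machinery of Section~2 is phrased for graphs, I would first replace $X$ by a quasi-isometric graph $G$: fix a maximal $\delta$-separated net $N\subseteq X$ and join two net points by an edge when they are within distance, say, $3\delta$. The inclusion $N\hookrightarrow X$ is then a quasi-isometry with constants depending only on $\delta$, a loop of $X$ of length $\ell$ is tracked by an edge-loop of $G$ of length $O(\ell/\delta)$, and the ball $B_R(x_0)$ of $X$ corresponds to a ball of $G$. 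As $\delta$-hyperbolicity is a quasi-isometry invariant, the local hyperbolicity of balls of $X$ transfers, with a comparable constant, to the corresponding balls of $G$; conversely a global linear isoperimetric inequality for $G$ yields hyperbolicity of $X$ up to the loss recorded in the last sentence of Section~2. Simple connectivity guarantees that every loop is null-homotopic, hence admits \emph{some} filling, and the null-homotopy hypothesis at scale $100\delta$/$200\delta$ is the quantitative seed for the fillings we build.

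With this reduction, the core is a \emph{propagation} argument modeled on the loop-shortening scheme of Lemma~\ref{shortenning} and Proposition~\ref{linear_area}, now carried out at scale $R$. First I would record the local input: since every ball $B_R(x_0)$ is $\delta$-hyperbolic and $R=10^5\delta$ is enormous compared with the null-homotopy scale, the forward direction of Theorem~\ref{isoperimetric} applied \emph{inside a single ball} shows that every loop of length at most a fixed multiple $L=\Theta(\delta)$ bounds a disc diagram of area $\le K_0\,\ell$, with the whole diagram supported in a ball of diameter $<200\delta$. This plays the role of the base case $n\le 2(s+s')$ in Proposition~\ref{linear_area}.

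The inductive step is where the argument really lives. Given a long loop $c$, which I may first straighten into a concatenation of geodesic arcs, I would use local $\delta$-hyperbolicity to produce a \emph{shortcut}: two points of $c$ that are far apart along $c$ but close in $X$, exactly as Lemma~\ref{shortenning} produces $x=v_p$, $y=v_q$ with $q-p$ large and $d(x,y)$ small. Here the shortcut comes not from a dismantling order but from the four-point condition valid in the radius-$R$ ball containing the relevant arc: on a locally geodesic loop that is too long, thinness of triangles inside that ball forces the arc to backtrack towards itself, so an arc can be replaced by a path shorter by a definite amount $\Theta(\delta)$. Cutting $c$ along this shortcut splits it into a short loop, fillable with bounded area by the local input since it has diameter $<200\delta\ll R$, and a strictly shorter loop on which I induct; as each cut shortens $c$ by a uniform $\Theta(\delta)$ and contributes $O(1)$ faces, summing gives $\mathrm{Area}_N(c)\le K\,\ell(c)$ with $N$ and $K$ depending only on $\delta$. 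Feeding this into Proposition~\ref{prop-isoperimetric-implies-hyperbolic} and transporting back to $X$ then yields hyperbolicity with a constant of the form $\Theta(\delta)$.

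The main obstacle is twofold and entirely quantitative. First, one must guarantee that \emph{every} local filling used in the induction is genuinely performed inside a ball where hyperbolicity is available: this is exactly why $R$ must dwarf the diameters ($<200\delta$) of the discs being glued, and taking $R=10^5\delta$ is what buys the needed room. Second, and more delicate, one must extract from the purely local four-point condition a \emph{uniform} shortcut on \emph{every} long loop, shortening it by a fixed positive $\Theta(\delta)$ while keeping the created gap short enough to fill locally; establishing this uniform gain, and then matching the explicit constants $R=10^5\delta$ and $\delta'=200\delta$ rather than an unspecified $\Theta(\delta)$, is the hard bookkeeping at the heart of Gromov's theorem~\cite{Gr}.
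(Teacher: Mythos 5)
First, note that the paper does not prove Theorem~\ref{Gromov} at all: it is quoted from Gromov (\cite{Gr}, 6.6.F), with proofs available in \cite{Bowd2}, \cite{Pa2}, \cite{DelGro}. So your proposal must stand on its own, and its overall scaffolding (discretize, fill short loops locally, shorten long loops inductively to get a global linear isoperimetric inequality, then invoke Theorem~\ref{isoperimetric} or Proposition~\ref{prop-isoperimetric-implies-hyperbolic}) is indeed the right general shape. The problem is that the step carrying all the weight is false as you state it.

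Your inductive step claims that the four-point condition, valid inside each ball $B_R(x_0)$, forces every sufficiently long loop to admit a shortcut: two points far apart along $c$ but at distance smaller by a definite $\Theta(\delta)$ in $X$. This is not a consequence of local hyperbolicity. Take $X$ to be a circle of circumference $C=10^{10}\delta$ with its arc-length metric: every ball of radius $R=10^5\delta$ is isometric to a segment, hence $0$-hyperbolic; every loop of length $<100\delta$ lies in an arc of length $<50\delta$ and is null-homotopic inside it; yet the loop going once around the circle is a $(C/2)$-local geodesic admitting \emph{no} shortcut whatsoever (points at arc-distance $s\le C/2$ are at distance exactly $s$), and $X$ is not $200\delta$-hyperbolic (four equally spaced points violate the four-point condition by $C/2$). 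The only hypothesis this space violates is global simple connectivity. Hence any correct argument must make simple connectivity enter the propagation step itself in a quantitative way --- e.g.\ by analyzing minimal-area fillings, or by an induction on scales as in the coarse Cartan--Hadamard theorems of \cite{DelGro} --- and not merely, as in your proposal, to guarantee that ``some filling exists.'' Note also that in the paper's own Lemma~\ref{shortenning}, which you take as your model, the shortcut comes from the \emph{global} dismantling order, not from hyperbolicity of balls; there is no free local-to-global analogue of that lemma. Your closing remark that establishing the ``uniform gain'' is ``the hard bookkeeping at the heart of Gromov's theorem'' concedes the point: what you defer is not bookkeeping but the entire content of the theorem, and the mechanism you sketch for it (thinness of triangles in an $R$-ball forcing a long loop to backtrack) is refuted by the example above.
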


Although Cayley graphs (viewed as 1-dimensional complexes) are not
simply connected, they can be replaced by the (2-dimensional) Cayley
complexes of the groups, which are simply connected, and the theorem
above applies. To check the hyperbolicity of a Cayley graph it is
enough to verify the hyperbolicity of a sufficiently big ball (note
that all balls of a given radius in the Cayley graph are isomorphic to
each other). For other versions of this ``local-to-global'' theorem
for hyperbolicity see \cite {Bowd2}, \cite {DelGro}, \cite
{Pa2}. However this theorem does not help when dealing with arbitrary
graphs due to the simple-connectedness assumptions.

\subsection{Approximating the hyperbolicity of a graph}

In this section, we will describe a fast $O(n^2)$ time algorithm for
constant-factor approximation of hyperbolicity $\delta^*$ of a graph $G=(V,E)$ with $n$ vertices and $m$ edges,
assuming that its distance-matrix has already been
computed.  Our algorithm is very simple and can be used as a practical
heuristic to approximate the hyperbolicity of graphs.

The hyperbolicity $\delta^*$ of a graph $G$ is an integer or a
half-integer belonging to the list
$\{0,\frac{1}{2},1,\frac{3}{2},2,\ldots n-1,\frac{2n-1}{2},n\}$. It is
known that $0$-hyperbolic graphs are exactly the block graphs, i.e.,
the graphs in which every $2$-connected component is a
clique~\cite{BaMu}. Consequently, from the distance-matrix of $G$, one
can check in time $O(n^2)$ whether $\delta^* = 0$ or not. In the
following, we assume that $\delta^* \geq \frac{1}{2}$.


Before presenting the general algorithm (Algorithm~\ref{algo-approx}),
we describe an auxiliary algorithm
(Algorithm~\ref{algo-approx-alpha}) that for a parameter $\alpha$
either ensures that $G$ is $(784\alpha+\frac{1}{2})$-hyperbolic
or  that $G$ is not $\frac{\alpha}{2}$-hyperbolic. Algorithm~\ref{algo-approx-alpha} is
based on Theorem \ref{theorem-main} and Corollary
\ref{cor-delta-implies-copwin} of Proposition~\ref{prop-VB}.

\begin{algorithm}
\SetKwFor{Foreach}{for each}{do}{endfor}
\caption{Approximated-Hyperbolicity($G=(V,E),\alpha$)\label{algo-approx-alpha}}
Construct a BFS-order $\preceq$ starting from an arbitrary vertex
$v_0$\;

For each $v\in V$, let $f_\alpha(v)$ be the vertex at distance
$\min\{2\alpha,d(v,v_0)\}$ from $v$ on the path of the BFS-tree from
$v$ to $v_0$ \;

\Foreach{$v \in V$}
{\lIf{$B_{4\alpha}(v) \cap X_v \not\subseteq
  B_{3\alpha}(f_\alpha(v))$}{\Return \textsc{No}}}
\Return \textsc{Yes}
\end{algorithm}



First, suppose that Algorithm~\ref{algo-approx-alpha} returns
\textsc{Yes}. This means that the BFS-order $\preceq$ is a
$(4\alpha,3\alpha)^*$-dismantling order of the vertices of
$G$. Consequently, from Theorem~\ref{theorem-main}, $G$ is
$(784\alpha+\frac{1}{2})$-hyperbolic.  Now, suppose that the algorithm
returns \textsc{No}. This means that there exists a vertex $v$ such
that $B_{4\alpha}(v) \cap X_v \not\subseteq
B_{3\alpha}(f_\alpha(v))$. From Proposition~\ref{prop-VB} with
$r=2\alpha$, this implies that $G$ is not $\frac{\alpha}{2}$-hyperbolic
and thus $\delta^* > \frac{\alpha}{2}$.

Algorithm~\ref{algo-approx} efficiently computes the smallest
integer $\alpha$ for which the Algorithm~\ref{algo-approx-alpha} returns the answer
\textsc{Yes}, i.e, the smallest integer $\alpha$ for which the inclusion $B_{4\alpha}(v) \cap X_v \subseteq
  B_{3\alpha}(f_\alpha(v))$ holds for all vertices $v$ of $G$.  Similarly to
Algorithm~\ref{algo-approx-alpha}, we assume that we have constructed a BFS-order $\preceq$ of
the vertices of $G$ starting from an arbitrary but fixed vertex $v_0$. Suppose
that for each vertex $v$, $p(v)$ denotes  the parent of $v$ in the BFS-tree
corresponding to $\preceq$ (with the convention that $p(v_0) =
v_0$). As in Algorithm~\ref{algo-approx-alpha}, for each vertex $v$ and for each value of
$\alpha$, let $f_\alpha(v)$ be the vertex at distance
$\min\{2\alpha,d(v,v_0)\}$ from $v$ located on the path of the
BFS-tree from $v$ to $v_0$. Note that $f_{\alpha+1}(v) = p(p(f_\alpha(v)))$.

We start  with a lemma ensuring that during the execution of the
algorithm, we do not have to completely recompute the balls
$B_{3\alpha}(f_\alpha(v))$ each time we modify $\alpha$.

\begin{lemma}\label{lem-monotone}
If $\alpha' \leq \alpha$, then
$B_{3\alpha'}(f_{\alpha'}(v)) \subseteq B_{3\alpha}(f_\alpha(v))$ for any vertex $v$ of $G$.
\end{lemma}

\begin{proof}
It is enough to prove that for any $\alpha\ge 0$ and any $v \in V$,
$B_{3\alpha}(f_{\alpha}(v)) \subseteq
B_{3(\alpha+1)}(f_{\alpha+1}(v))$.  Pick any vertex $v \in V$. Since
 $f_{\alpha+1}(v) = p(p(f_\alpha(v)))$, we have
$d(f_{\alpha+1}(v),f_\alpha(v)) \leq 2$. Let $w \in
B_{3\alpha}(f_{\alpha}(v))$. Note that $d(w,f_{\alpha+1}(v)) \leq
d(w,f_\alpha(v)) + d(f_\alpha(v),f_{\alpha+1}(v)) \leq 3\alpha + 2
\leq 3(\alpha+1)$. Consequently, $B_{3\alpha}(f_{\alpha}(v)) \subseteq
B_{3(\alpha+1)}(f_{\alpha+1}(v))$.
\end{proof}


\begin{algorithm}
\caption{Approximated-Hyperbolicity($G=(V,E)$)\label{algo-approx}}
\SetKwFunction{Done}{done}
\SetKwFunction{Push}{push}
\SetKwFunction{Pop}{pop}
\SetKwData{myTrue}{true}
\SetKwData{myFalse}{false}
\SetKwFor{Foreach}{for each}{do}{endfor}
Construct a BFS-order $\preceq$ starting from an arbitrary vertex $v_0$\;
For each $v \in V$, let $p(v)$  be the parent of $v$ in the BFS-tree \;
For each $v \in V$, let $L(v)$ be a stack containing all vertices of
$G$ sorted (increasingly) by their distance to $v$ \;

$\Done \leftarrow \myFalse$\;
$\alpha \leftarrow 0$\;
\lForeach{ $v \in V$}{$f_\alpha(v) \leftarrow v$}
\While{\emph{not} $\Done$}
{
  $\Done \leftarrow \myTrue$\;
  $\alpha \leftarrow \alpha + 1$\;
  \Foreach{$v \in V$}
      {
        $f_\alpha(v) \leftarrow p(p(f_\alpha(v)))$\;
        \Repeat{$d(u,v)>4\alpha$ \emph{or} 
          $(u \preceq v$ \emph{and} $d(u,f_\alpha(v)) >
          3\alpha)$}
               {$u \leftarrow \Pop(L(v))$} 
        \lIf{$d(u,v) \leq 4 \alpha$}{$\Done \leftarrow \myFalse$}
        $\Push(u,L(v))$\;
      }
}
\Return $\alpha$
\end{algorithm}


Algorithm~\ref{algo-approx} can be viewed as a ``sieve of $n$ stacks"
and works as follows. In the preprocessing step, for each vertex $v$
of $G$, we sort the vertices of $G$ according to their distances to
$v$ and successively insert them in a stack $L(v)$ (so that $v$ is the
head of $L(v)$).  Starting with $\alpha = 1$, for each vertex $v$ of
$G$, we compute $f_{\alpha}(v)$ and as long as the current head $u$ of
$L(v)$ is in $B_{4\alpha}(v)$ and is such that $v \preceq u$ or
$d(u,f_\alpha(v)) \leq 3\alpha$, we pop $u$ from $L(v)$.  The idea is
that none of those popped elements can be a witness for
$B_{4\alpha}(v)\cap X_v \not \subseteq B_{3\alpha}(f_\alpha(v))$. If
there exists a vertex $v$ which is at distance at most $4\alpha$ from
the head $u$ of its stack $L(v)$, then we have found a witness showing
that $B_{4\alpha}(v)\cap X_v \not \subseteq
B_{3\alpha}(f_\alpha(v))$. In this case, by Proposition~\ref{prop-VB},
we know that $G$ is not $\frac{\alpha}{2}$-hyperbolic. Thus, we
increment $\alpha$ by 1 and start a new iteration. Otherwise, if each
$v$ is at distance $>4\alpha$ from the current head of $L(v)$, then
Algorithm~\ref{algo-approx} returns the current $\alpha$ as the least
value for which the Algorithm~\ref{algo-approx-alpha} returns the
answer \textsc{Yes}.

\begin{proposition}\label{prop-algo-approx}
There exists a constant-factor approximation algorithm to approximate the hyperbolicity $\delta^*$ of a graph
$G$ with $n$ vertices running in $O(n^2)$ time if $G$ is given by its distance-matrix. The algorithm
returns a $1569$-approximation of $\delta^*$.
\end{proposition}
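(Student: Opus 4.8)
The plan is to prove Proposition~\ref{prop-algo-approx} by establishing two things: that Algorithm~\ref{algo-approx} runs in $O(n^2)$ time given the distance-matrix, and that the value $\alpha$ it returns yields a constant-factor approximation of $\delta^*$. First I would argue correctness. By the analysis of Algorithm~\ref{algo-approx-alpha}, the value $\alpha$ returned is the smallest integer for which the BFS-order $\preceq$ is a $(4\alpha,3\alpha)^*$-dismantling order, i.e. the smallest $\alpha$ with $B_{4\alpha}(v)\cap X_v\subseteq B_{3\alpha}(f_\alpha(v))$ for all $v$. For this $\alpha$, Theorem~\ref{theorem-main} (applied with $s=4\alpha$, $s'=3\alpha$, so $N=s+s'=7\alpha$ and $s-s'=\alpha$) gives the upper bound $\delta^*\le 16(7\alpha)\lceil 7\alpha/\alpha\rceil+\tfrac12=16\cdot 7\alpha\cdot 7+\tfrac12=784\alpha+\tfrac12$. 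For the lower bound, since $\alpha$ is the smallest value that works, the value $\alpha-1$ must have failed: there is a vertex $v$ with $B_{4(\alpha-1)}(v)\cap X_v\not\subseteq B_{3(\alpha-1)}(f_{\alpha-1}(v))$, and Proposition~\ref{prop-VB} with $r=2(\alpha-1)$ then certifies that $G$ is not $\tfrac{\alpha-1}{2}$-hyperbolic, so $\delta^*>\tfrac{\alpha-1}{2}$, i.e. $\alpha<2\delta^*+1$.

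Combining these, I would derive the approximation guarantee: the output value $\alpha$ satisfies $\tfrac{\alpha-1}{2}<\delta^*\le 784\alpha+\tfrac12$. To turn the returned $\alpha$ into an explicit estimate of $\delta^*$ with a stated ratio, I would report the quantity $784\alpha+\tfrac12$ as the approximation; from $\alpha<2\delta^*+1$ we get $784\alpha+\tfrac12<784(2\delta^*+1)+\tfrac12=1568\delta^*+784+\tfrac12$, and since $\delta^*\ge\tfrac12$ (the $\delta^*=0$ case having been dispatched separately via block graphs in $O(n^2)$ time), the additive term $784+\tfrac12$ is absorbed to yield $784\alpha+\tfrac12<1569\,\delta^*$, while $784\alpha+\tfrac12\ge\delta^*$. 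Thus $784\alpha+\tfrac12$ lies within a factor $1569$ of $\delta^*$, giving the claimed $1569$-approximation. I would be careful to track the two extreme cases $\alpha=1$ (where $\alpha-1=0$ gives only the trivial lower bound) separately, checking that the bound $\delta^*\ge\tfrac12$ still makes the arithmetic go through.

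For the running time, the key is Lemma~\ref{lem-monotone}, which guarantees that $B_{3\alpha'}(f_{\alpha'}(v))\subseteq B_{3\alpha}(f_\alpha(v))$ for $\alpha'\le\alpha$, so that as $\alpha$ increases the relevant balls only grow. This monotonicity is what justifies the stack structure $L(v)$: a vertex $u$ popped from $L(v)$ at some stage because $u\preceq v$ or $d(u,f_\alpha(v))\le 3\alpha$ can never again be a witness against the inclusion at any larger $\alpha$, since it remains inside the (enlarged) ball $B_{3\alpha}(f_\alpha(v))$ and $X_v$ is fixed. Each vertex of each stack $L(v)$ is therefore popped at most once over the entire execution, so the total number of pop operations across all $v$ and all values of $\alpha$ is $O(n)$ per stack and $O(n^2)$ overall; updating $f_\alpha(v)\leftarrow p(p(f_\alpha(v)))$ is $O(1)$ per vertex per iteration, and the number of iterations of the outer while-loop is the returned $\alpha=O(\delta^*)=O(n)$, contributing $O(n\cdot n)=O(n^2)$ for the bookkeeping. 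I would also note that the preprocessing — building the BFS-order, the parent pointers, and sorting each $L(v)$ by distance to $v$ — costs $O(n^2)$ given the distance-matrix (each of the $n$ stacks is sorted from a single row, which can be done in $O(n\log n)$, or in $O(n)$ by bucket/counting sort since distances are integers in $\{0,\dots,n-1\}$, for $O(n^2)$ total).

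The main obstacle I anticipate is the amortized running-time argument for the stacks, specifically verifying rigorously that the $\Repeat$ loop never wrongly discards a genuine witness and that the combined pop-count is truly $O(n^2)$ rather than $O(n^2\alpha)$. This hinges entirely on Lemma~\ref{lem-monotone}: I must confirm that once the head $u$ of $L(v)$ is popped at some value of $\alpha$, it would also satisfy the popping condition at every larger $\alpha$, so that re-pushing the single surviving head $u$ (via $\Push(u,L(v))$) and resuming from it in the next iteration is correct and loses nothing. The subtle point is the membership condition $u\preceq v$: this part of the criterion ($X_v=\{w:w\preceq v\}$) is independent of $\alpha$, so a vertex popped because $u\preceq v$ stays irrelevant forever, while a vertex popped because $d(u,f_\alpha(v))\le 3\alpha$ stays inside the ball by the lemma. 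Once both sub-cases are checked, the amortization is clean and the $O(n^2)$ bound follows; the rest is the arithmetic already sketched above.
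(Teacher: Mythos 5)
Your route is the paper's route in every structural respect: the same characterization of the returned $\alpha$ as the smallest value making $\preceq$ a $(4\alpha,3\alpha)^*$-dismantling order, the same use of Lemma~\ref{lem-monotone} to justify that popped vertices never need to be reconsidered, Theorem~\ref{theorem-main} with $s=4\alpha$, $s'=3\alpha$ for the upper bound $\delta^*\le 784\alpha+\tfrac12$, Proposition~\ref{prop-VB} with $r=2(\alpha-1)$ for the lower bound, and the same $O(n^2)$ accounting (counting-sort preprocessing, amortized pops, $O(n)$ iterations). However, there is a genuine flaw in the final arithmetic that is supposed to yield the constant $1569$. From $\alpha<2\delta^*+1$ you get $784\alpha+\tfrac12<1568\delta^*+784+\tfrac12$ and then claim the additive term $784+\tfrac12$ is ``absorbed'' because $\delta^*\ge\tfrac12$. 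That absorption is false: $1568\delta^*+784+\tfrac12\le 1569\delta^*$ holds only when $\delta^*\ge 784+\tfrac12$. With only $\delta^*\ge \tfrac12$ available, your chain of inequalities gives $784\alpha+\tfrac12<1568\delta^*+1569\cdot\tfrac12\le 3137\delta^*$, i.e.\ a $3137$-approximation, not the stated $1569$-approximation.

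The missing idea is a one-line integrality argument, which is what the paper implicitly relies on: $\delta^*$ is an integer or half-integer (as recalled at the start of the algorithmic section), so $2\delta^*$ is an integer; since $\alpha$ is also an integer, the strict inequality $2\delta^*>\alpha-1$ already yields $\alpha\le 2\delta^*$, which is strictly stronger than your $\alpha<2\delta^*+1$. With this sharpening the computation closes exactly as in the paper: $\delta^*\le 784\alpha+\tfrac12\le 1568\delta^*+\tfrac12\le 1569\delta^*$, the last step using $\delta^*\ge\tfrac12$ from the block-graph pre-check; your worry about the extreme case $\alpha=1$ is then also settled, since $784+\tfrac12=1569\cdot\tfrac12\le 1569\delta^*$. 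A very minor further imprecision: the head $u$ that terminates the repeat-loop is pushed back onto $L(v)$ and may be popped again at later iterations, so ``each vertex of each stack is popped at most once over the entire execution'' is not literally true; but these re-pops amount to one per vertex per iteration and are already covered by your $O(n\cdot n)$ bookkeeping term, so the $O(n^2)$ running-time bound stands.
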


\begin{proof} We start with the correctness proof of Algorithm~\ref{algo-approx}
(the correctness
of Algorithm~\ref{algo-approx-alpha} was given above).
Suppose that we are at iteration $\alpha$ and consider an arbitrary vertex
$v$ of $G$.  Any vertex $w$ that has been removed from $L(v)$ at a previous
iteration $\alpha' < \alpha$ either satisfies $v \preceq w$ or
$d(w,f_{\alpha'}(v)) \leq 3\alpha'$. Consequently, by
Lemma~\ref{lem-monotone}, $v \preceq w$ or $d(w,f_{\alpha}(v)) \leq
3\alpha$. Therefore, at the end of iteration $\alpha$, all
vertices that have been already removed from $L(v)$  cannot serve as witnesses for
$B_{4\alpha}(v)\cap X_v\not \subseteq B_{3\alpha}(f_\alpha(v))$.

Suppose now that at the end of iteration $\alpha$, for every vertex $v$ of $G$, the
head $u$ of $L(v)$ satisfies the inequality $d(u,v) > 4\alpha$. Since
initially, all vertices of $G$ were inserted in $L(v)$ according to their distances
to $v$, this means that all vertices of
$B_{4\alpha}(v)$ have been removed from $L(v)$. Since each $w$
removed from $L(v)$  either appears after $v$ in $\preceq$ or has
distance at most $3\alpha$ from $f_\alpha(v)$, we conclude that
$B_{4\alpha}(v) \cap X_v \subseteq B_{3\alpha(v)}$. Consequently,
$\preceq$ is a $(4\alpha,3\alpha)^*$-dismantling order and by
Theorem~\ref{theorem-main}, $G$ is $(784\alpha+\frac{1}{2})$-hyperbolic.
Since we also know that $G$ is not $\frac{\alpha-1}{2}$-hyperbolic,
$\delta^* \leq 784\alpha + \frac{1}{2} \leq 1568\delta^* +\frac{1}{2}
\leq 1569\delta^*$.  This gives a $1569$-approximation of the
hyperbolicity $\delta^*$ of $G$.

As to the complexity, first note that computing the BFS-order
$\preceq$ and the value of $p(v)$ for each $v \in V$ can be done in
time $O(n^2)$ from the distance-matrix of $G$ (this can be done in
time linear in the number of edges of $G$ if we are also given the
adjacency list of $G$).  Since $|V|=n$ and all the pairwise distances
are integers between $0$ and $n$, one can construct each stack $L(v)$
in time $O(n)$ using a counting sort algorithm. Thus, the
preprocessing step requires total $O(n^2)$ time. Since during the
execution of the algorithm we always have $\alpha \leq 2\delta^* \leq
2n$, $\alpha$ is incremented at most $2n$ times. Since for each $v \in
V$, once a vertex $w$ is popped from $L(v)$, $w$ is no longer used for
$v$ at subsequent iterations, there are at most $O(n^2)$ \texttt{pop}
operations. Therefore Algorithm~\ref{algo-approx} terminates in time
$O(n^2)$.
\end{proof}

Now suppose that the input graph $G=(V,E)$ is given by the adjacency list
(instead  of its distance-matrix). In the most naive implementation of
Algorithm~\ref{algo-approx}, one can perform a BFS-traversal
of $G$ from each vertex of $V$ to compute the distance matrix of $G$ in time
$O(mn)$, where $m = |E|$ and $n=|V|$. One can also use Seidel's
algorithm~\cite{Seidel} to compute the distance-matrix of $G$ in time
$O(n^{2.38})$. Hence, we get immediately the following corollary.

\begin{corollary}
There exists a constant-factor approximation algorithm to approximate
the hyperbolicity $\delta^*$ of a graph $G$ with $n$ vertices and $m$
edges running in $O(\min (mn, n^{2.38}))$ time if
$G$ is given by its adjacency list.
\end{corollary}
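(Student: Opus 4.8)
The plan is to reduce this statement to Proposition~\ref{prop-algo-approx}, which already furnishes a $1569$-approximation of $\delta^*$ running in $O(n^2)$ time \emph{provided} the distance-matrix of $G$ is available. Since here $G$ is given only by its adjacency list, the sole missing ingredient is the computation of this matrix; once it is obtained, the algorithm of Proposition~\ref{prop-algo-approx} applies verbatim, so the approximation factor $1569$ is inherited unchanged and no new correctness analysis is needed.

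First I would compute the distance-matrix in the cheaper of two standard ways. As $G$ is unweighted, a breadth-first search launched from each of the $n$ vertices recovers all pairwise distances, and each such BFS costs $O(m)$, giving $O(mn)$ in total. Alternatively, Seidel's algorithm~\cite{Seidel} computes the distance-matrix of an unweighted undirected graph in $O(n^{2.38})$ time. Keeping the better of the two yields a preprocessing cost of $O(\min(mn,n^{2.38}))$.

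It then remains to add the two phases and simplify: preprocessing in $O(\min(mn,n^{2.38}))$ followed by the core algorithm in $O(n^2)$. The only thing to verify is that the $O(n^2)$ term is absorbed, and this is immediate from the connectivity of $G$: since $m \ge n-1$ we have $mn = \Omega(n^2)$, while trivially $n^{2.38} = \Omega(n^2)$, so the distance-matrix computation dominates $O(n^2)$ in either branch. Hence the total running time is $O(\min(mn,n^{2.38}))$, as claimed. The hard part, such as it is, is purely this bookkeeping; no combinatorial or geometric argument is required beyond Proposition~\ref{prop-algo-approx}.
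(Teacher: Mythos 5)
Your proposal is correct and matches the paper's own argument exactly: the paper also computes the distance-matrix either by $n$ BFS traversals in $O(mn)$ time or by Seidel's algorithm in $O(n^{2.38})$ time, and then invokes Proposition~\ref{prop-algo-approx}. Your added remark that $m \ge n-1$ makes the $O(n^2)$ core dominated by the preprocessing is a small but valid tidying of the bookkeeping the paper leaves implicit.
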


However, note that once we have computed the BFS-order $\preceq$ in
time $O(m)$, all the remaining computations are local, around each
vertex. Namely, one can replace the preprocessing step of
Algorithm~\ref{algo-approx} by the following localized
computations. First, we modify slightly the algorithm such that each
time we increase $\alpha$, we consider the vertices of $V$ in the
order $\preceq$. 

Then, instead of having $L(v)$ as a stack we consider $L(v)$ as a
queue; initially each $L(v)$ contains a single vertex $v$, which is
labeled.  At iteration $\alpha$, when the head $u$ of $L(v)$ is
removed from the queue $L(v)$, we label and insert in $L(v)$ all still
unlabeled neighbors of $u$. These inserted vertices are one step
further from $v$ than $u$.  For each vertex $v$, the order in which
vertices are added in $L(v)$ corresponds to a BFS-order computed from
$v$.  Moreover, each time we insert a vertex $u$ in $L(v)$, we store
the distance $d(u,v)$.  In order to efficiently retrieve the computed
distances, we will use a matrix $D$. Initially, $D(u,v) = \infty$ for
all $u,v$ and when $u$ is inserted in $L(v)$, we update $D(u,v)$ to
$d(u,v)$. Using a standard trick (see~\cite[ex. 2.12]{AHU}), one can
avoid the initialization cost of the matrix $D$. 

All the remaining steps of the Algorithm~\ref{algo-approx} remain the
same.  When in Algorithm~\ref{algo-approx} we need the value of
$d(u,f_\alpha(v))$ where $u$ is the head of $L(v)$, we now use
$D(u,f_\alpha(v))$ instead. In order to prove that the algorithm is
still correct, it is enough to show that if $u\preceq v$, $d(u,v) \leq
4\alpha$, and $D(u,f_\alpha(v)) > 3\alpha$, then $G$ is not
$\frac{\alpha}{2}$-hyperbolic and thus we need to increment
$\alpha$. We prove this property by induction on $\preceq$. Note that
if $D(u,f_\alpha(v)) \neq \infty$, then $D(u,f_\alpha(v)) =
d(u,f_\alpha(v))$ and $u \in (B_{4\alpha}(v) \cap X_{v})\setminus
B_{3\alpha}(f_\alpha(v))$. Thus, by Proposition~\ref{prop-VB}, $G$ is
not $\frac{\alpha}{2}$-hyperbolic. In particular, this is the case if
$v = v_0$, since $f_\alpha(v_0) = v_0$ and $D(u,f_\alpha(v_0)) =
d(u,v_0)$. Suppose now that $D(u,f_\alpha(v)) = \infty$ and let $v' =
f_\alpha(v)$. Let $u'$ be the head of $L(v')$ and note that
$D(u',v')=d(u',v')$. Since $u$ has not yet been added to $L(v')$,
necessarily, $d(u,v') \geq d(u',v')$. Thus, if $D(u',v') > 4\alpha$,
we have that $d(u,v') > 4\alpha > 3\alpha$ and by
Proposition~\ref{prop-VB}, $G$ is not $\frac{\alpha}{2}$-hyperbolic
since $u \in (B_{4\alpha}(v) \cap X_{v})\setminus
B_{3\alpha}(f_\alpha(v))$. Suppose now that $D(u',v') \leq
4\alpha$. Since $v' = f_\alpha(v) \preceq v$, we have already iterated
over $v'$ at step $\alpha$, and consequently, $u' \preceq v'$ and
$D(u',f_\alpha(v')) > 3\alpha$. Thus, by induction hypothesis, we know
that $G$ is not $\frac{\alpha}{2}$-hyperbolic.

Since $\alpha$ is never greater than $2\delta^*$, with this
implementation, the complexity of Algorithm~\ref{algo-approx} becomes
$O(\sum_{v\in V}|E(B_{8\delta^*+1}(v))|),$ where $|E(B_{8\delta^*+1}(v))|$
is the number of edges in the subgraph of $G$ induced by the ball
$B_{8\delta^*+1}(v)$.  This is efficient if the balls $B_{8\delta^*+1}(v)$
do not contain too many vertices and edges, in particular if $G$ is a
bounded-degree graph of small hyperbolicity. Consequently, we obtain
the following result.

\begin{proposition}
If $G$ is a graph with $n$ vertices and $m$ edges, given by its adjacency
list, then  Algorithm~\ref{algo-approx} can be implemented to run  in
$O(\sum_{v\in V}|E(B_{8\delta^*+1}(v)|)$ time. In particular, if there 
exists a constant $K>0$ such that for each $v \in V$
the ball $B_{8\delta^*+1}(v)$ contains at most $K$ edges, then its complexity becomes
$O(Kn)$. 
\end{proposition}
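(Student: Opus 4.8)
The statement to prove is a running-time bound, and since the correctness of the localized variant was verified in the preceding paragraph, the plan is to carry out a careful amortized cost analysis. Two facts are the backbone of the argument. First, as observed in the proof of Proposition~\ref{prop-algo-approx}, one always has $\alpha\le 2\delta^*$ during the execution; in particular the outer \textbf{while} loop performs at most $2\delta^*$ iterations and terminates with $\alpha\le 2\delta^*$. Second, and crucially, for each fixed $v$ the queue $L(v)$ realizes a \emph{single} incremental breadth-first search rooted at $v$: every vertex is labeled the first time it is inserted in $L(v)$ and is then never reinserted, and by Lemma~\ref{lem-monotone} no vertex discarded at an earlier value of $\alpha$ can re-enter the computation; hence successive iterations simply resume the same BFS where the previous one stopped, rather than restarting it.

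First I would bound the radius of this BFS. At the final iteration the comparison threshold in the inner loop is $4\alpha\le 8\delta^*$, so only vertices $u$ with $d(u,v)\le 4\alpha\le 8\delta^*$ are ever dequeued and expanded; the unlabeled neighbors inserted when expanding such a $u$ lie at distance at most $4\alpha+1\le 8\delta^*+1$ from $v$, and the single vertex that halts the loop is merely examined, not expanded. Therefore every vertex ever placed in $L(v)$ and every edge ever scanned while expanding a vertex of $L(v)$ lies inside the subgraph induced by $B_{8\delta^*+1}(v)$. Charging each dequeue-and-expand step to the edges it scans, the total work spent maintaining $L(v)$ over the whole execution is $O(|E(B_{8\delta^*+1}(v))|)$, and summing over $v$ yields $O\big(\sum_{v\in V}|E(B_{8\delta^*+1}(v))|\big)$.

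It then remains to check that the constant-time bookkeeping is genuinely absorbed into this bound. Each distance comparison (for instance $d(u,v)\le 4\alpha$ or $d(u,f_\alpha(v))\le 3\alpha$) is answered in $O(1)$ by a lookup in the matrix $D$, which is populated lazily---entry $D(u,v)$ is written exactly when $u$ enters $L(v)$, so no $\Theta(n^2)$ initialization is incurred thanks to the standard uninitialized-array trick of~\cite[ex. 2.12]{AHU}---and the correctness argument already showed each needed entry is available when queried. The only cost not yet charged is the single halting examination performed for each pair $(v,\alpha)$, contributing $O(\alpha)=O(\delta^*)$ per vertex. But $\alpha\le 2\delta^*$, while $B_{8\delta^*+1}(v)$ is either the whole (connected) graph---so $|E(B_{8\delta^*+1}(v))|=m=\Omega(\delta^*)$---or a proper ball, in which case it contains a geodesic from $v$ of length $8\delta^*+1$, hence at least $8\delta^*+1>2\delta^*$ edges. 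In both cases this overhead is $O(|E(B_{8\delta^*+1}(v))|)$ and is absorbed. Consequently the running time is $O\big(\sum_{v\in V}|E(B_{8\delta^*+1}(v))|\big)$; when every such ball has at most $K$ edges the sum is a sum of $n$ terms each at most $K$, giving the claimed $O(Kn)$.

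I expect the main obstacle to be exactly the amortization claimed in the first paragraph: one must justify that iterating $\alpha$ from $1$ to its final value does not cause any vertex to be expanded, or any edge to be scanned, more than once, so that the number of iterations enters the bound only through the exploration radius $8\delta^*+1$ and never as a multiplicative factor. Everything else is routine constant-time bookkeeping.
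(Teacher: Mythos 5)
Your proof is correct and takes essentially the same approach as the paper: the paper's argument likewise rests on the per-vertex incremental BFS queues with the lazily initialized distance matrix $D$, the exploration radius bounded via $\alpha \leq 2\delta^*$, and the observation that each vertex is labeled once and never reinserted. The only difference is one of detail — the paper concludes the complexity bound in a single sentence, whereas you make explicit the amortized charging of edge scans to $E(B_{8\delta^*+1}(v))$ and the absorption of the $O(\delta^*)$ per-vertex bookkeeping overhead, both of which are sound.
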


Finally, in the case of weakly modular graphs, we can obtain a sharper
approximation of hyperbolicity using the same ideas. The following
result is the counterpart of Proposition~\ref{prop-algo-approx} for
weakly modular graphs.

\begin{proposition}
If $G$ is a weakly modular graph given by its distance matrix, then
in time $O(n^2)$ one can compute $\delta'$  such that $\delta^* \leq
\delta' \leq 736\delta^*+368$.
\end{proposition}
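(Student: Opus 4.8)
The plan is to reuse the sieve-of-stacks method of Algorithm~\ref{algo-approx} almost verbatim, changing only the radii of the balls that are tested and invoking the \emph{linear} hyperbolicity bound of Theorem~\ref{weakly-modular} in place of the quadratic bound of Theorem~\ref{theorem-main}. For a parameter $\alpha\ge 1$ I would let $f_\alpha(v)$ be the vertex of the BFS-tree at distance $\min\{\alpha,d(v,v_0)\}$ from $v$ on the path from $v$ to the root $v_0$, and I would test, for every vertex $v$, whether $B_{2\alpha}(v)\cap X_v\subseteq B_{2\alpha-1}(f_\alpha(v))$. If this inclusion holds for all $v$, then the BFS-order is a $(2\alpha,2\alpha-1)^*$-dismantling order; the algorithm returns the least such $\alpha$, and I would set $\delta':=368\alpha$.

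For correctness I would follow the proof of Proposition~\ref{prop-algo-approx}, inserting two new ingredients. In the \textsc{Yes} case, a $(2\alpha,2\alpha-1)^*$-dismantling order is in particular a $(2\alpha,2\alpha-1)$-dismantling order (removing a vertex only shrinks a ball), so, as $G$ is weakly modular and $2\alpha-1<2\alpha$, Theorem~\ref{weakly-modular} shows that $G$ is $184\cdot 2\alpha=368\alpha$-hyperbolic, whence $\delta^*\le 368\alpha=\delta'$. In the \textsc{No} case there is a witness $w\in B_{2\alpha}(v)\cap X_v$ with $d(w,f_\alpha(v))\ge 2\alpha$; applying Proposition~\ref{prop-VB} with $x=v$, $z=v_0$, $y=w$, $r=\alpha$ and $c=f_\alpha(v)$ (the hypothesis $d(y,z)\le d(x,z)$ holding because $w\in X_v$ forces $d(w,v_0)\le d(v,v_0)$) gives $2\alpha\le d(w,f_\alpha(v))\le \alpha+2\delta$, so $\delta^*\ge \alpha/2$. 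Taking $\alpha$ to be the least value returning \textsc{Yes}, the value $\alpha-1$ returned \textsc{No}, so $\delta^*\ge (\alpha-1)/2$, i.e. $\alpha\le 2\delta^*+1$. Combining the two estimates, $\delta^*\le\delta'=368\alpha\le 368(2\delta^*+1)=736\delta^*+368$, as claimed; the boundary case $\alpha=1$ is covered by the standing assumption $\delta^*\ge\tfrac12$.

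The complexity argument would be a rerun of Lemma~\ref{lem-monotone} and the timing analysis of Proposition~\ref{prop-algo-approx}, once the monotonicity of the tested balls is re-established for the new radii. Here $f_{\alpha+1}(v)=p(f_\alpha(v))$, so $d(f_\alpha(v),f_{\alpha+1}(v))\le 1$, and any $w\in B_{2\alpha-1}(f_\alpha(v))$ satisfies $d(w,f_{\alpha+1}(v))\le (2\alpha-1)+1=2\alpha\le 2(\alpha+1)-1$; hence $B_{2\alpha-1}(f_\alpha(v))\subseteq B_{2\alpha+1}(f_{\alpha+1}(v))$, so a vertex popped from a stack at an earlier stage stays a non-witness and the stacks are reused. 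Since $\alpha$ never exceeds $2\delta^*+1=O(n)$ and every vertex is popped from each stack at most once, there are $O(n^2)$ stack operations; the preprocessing (BFS-order, parent pointers, and the $n$ distance-sorted stacks obtained by counting sort from the distance-matrix) also costs $O(n^2)$.

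I expect the only delicate point to be the parameter tuning. One has to pick the outer radius $2\alpha$, the inner radius $2\alpha-1$, and the ancestor distance $\alpha$ so that the upper bound $184s$ of Theorem~\ref{weakly-modular} and the lower bound $\alpha/2$ extracted from Proposition~\ref{prop-VB} interlock into a multiplicative constant of exactly $736$ with additive error $368$. The key qualitative difference from the general case is that, since Theorem~\ref{weakly-modular} imposes no lower bound on the gap $s-s'$, one may take the inner radius as large as $s-1$; this is precisely what makes the \textsc{No}-estimate strong enough to roughly halve the approximation factor relative to Proposition~\ref{prop-algo-approx}. Everything else is a routine transcription of that proof.
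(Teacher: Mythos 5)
Your proposal is correct and takes essentially the same route as the paper's own proof: an adapted sieve version of Algorithm~\ref{algo-approx}, with Theorem~\ref{weakly-modular} (via the observation that a $(s,s')^*$-dismantling order is an $(s,s')$-dismantling order) supplying the upper bound in the \textsc{Yes} case and Proposition~\ref{prop-VB} supplying the lower bound $\delta^*\geq(\alpha-1)/2$ in the \textsc{No} case, together with the analogue of Lemma~\ref{lem-monotone} for the new radii. The only difference is an immaterial shift of index: the paper uses ancestor distance $\alpha+1$ and radii $2\alpha+2$, $2\alpha+1$ where you use $\alpha$ and $2\alpha$, $2\alpha-1$, and both yield the same bound $\delta^*\leq\delta'\leq 736\delta^*+368$.
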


\begin{proof}
Consider a weakly modular graph graph $G=(V,E)$ and assume that we have constructed a BFS-order
$\preceq$ on $V$ starting from an arbitrary vertex $v_0$; as before, assume that
for each $v$, $p(v)$ is the parent of $v$ in the corresponding BFS-tree
(with $p(v_0) = v_0$). For each vertex $v$ and each integer $\alpha$,
let $g_\alpha(v)$ be the vertex at distance
$\min\{\alpha+1,d(v,v_0)\}$ on the path of the BFS-tree from $v$ to
$v_0$. Note that $g_{\alpha+1}(v) = p(g(\alpha(v))$.

We want to find the smallest $\alpha$ such that $B_{2\alpha+2}(v)
\cap X_v \subseteq B_{2\alpha+1}(g_\alpha(v))$ for all vertices
$v$. We first note that for any vertex $v$ and any value of $\alpha$,
$B_{2\alpha+1}(g_\alpha(v)) \subseteq
B_{2(\alpha+1)+1}(g_{\alpha+1}(v))$. Indeed, since
$d(g_\alpha(v),g_{\alpha+1}(v)) \leq 1$, for every $u \in
B_{2\alpha+1}(g_\alpha(v))$, by triangle inequality $d(u,
g_{\alpha+1}(v)) \leq 2\alpha + 1 +1 \leq
2(\alpha+1)+1$. Consequently, one only need to slightly adapt
Algorithm~\ref{algo-approx} to compute the smallest $\alpha$ such that
$B_{2\alpha+2}(v) \cap X_v \subseteq B_{2\alpha+1}(g_\alpha(v))$
for all vertices $v$. This algorithm runs in time $O(n^2)$.

Suppose now that we have computed the smallest integer $\alpha$ such
that $B_{2\alpha+2}(v) \cap X_v \subseteq
B_{2\alpha+1}(g_\alpha(v))$ for all vertices $v$. By
Theorem~\ref{weakly-modular}, $G$ is
$184(2\alpha+2)$-hyperbolic. Moreover, since $B_{2\alpha}(v) \cap
X_v \not\subseteq B_{2\alpha-1}(g_{\alpha-1}(v))$, by
Proposition~\ref{prop-VB} with $r=\alpha$, we have that $G$ is not
$\frac{\alpha-1}{2}$-hyperbolic and thus $\delta^* \geq
\frac{\alpha}{2}$. Consequently, $\frac{\alpha}{2} \leq \delta^* \leq
368(\alpha + 1)$ and thus, $\delta^* \leq 368(\alpha +1) \leq 368
(2\delta^* +1)$.
\end{proof}




\subsection{Graphs with balanced metric triangles}

We conclude our paper with another local-to-global condition for
hyperbolicity, analogous to Theorem \ref{Gromov}. Namely, we replace
the topological condition of simple connectivity by a metric
condition (this result can be combined with algorithms from previous subsection
to estimate the hyperbolicity of a graph).

Given a strictly increasing function $f : \N \rightarrow \N$, a graph
$G$ has \emph{$f$-balanced metric triangles} if for every metric triangle
$uvw$, $d(u,v) \leq \min \{f(d(u,w)), f(d(v,w))\}$.  In other words,
for any metric triangle $uvw$, if one side of the triangle is
``small'', then the other two sides are ``relatively small'' too.  When $f(k) =
C\cdot k$ for some constant $C$, we say that the metric triangles of
$G$ are \emph{linearly balanced}. If $G$ is a weakly modular graph,
then $G$ has linearly balanced triangles; indeed, all metric triangles
of $G$ are equilateral, thus one can choose $f(k) = k$.





\begin{proposition}\label{prop:lambda-equilateral}
Let $G$ be a graph with $f$-balanced metric triangles. If every
ball $B_{f(12\delta)+8\delta}(v)$ of $G$ is $\delta$-hyperbolic with
$\delta > 0$, then $G$ is $1569\delta$-hyperbolic. Moreover, if $G$ is
a weakly modular graph such that every ball $B_{10\delta+5}(v)$ of
$G$ is $\delta$-hyperbolic, then $G$ is $(736\delta+368)$-hyperbolic.
\end{proposition}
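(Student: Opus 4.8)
The plan is to deduce global hyperbolicity from the dismantling machinery already developed: I would show that the hypotheses force a breadth-first-search order on $V$ to be an $(s,s')^*$-dismantling order for suitable $s,s'$ with $s'<s$, and then invoke Theorem~\ref{theorem-main} in the general case and Theorem~\ref{weakly-modular} in the weakly modular case. Concretely, for the first assertion I would aim at an $(8\delta,6\delta)^*$-dismantling order; since then $s-s'=2\delta$ and $\lceil (s+s')/(s-s')\rceil = 7$, Theorem~\ref{theorem-main} gives $16\cdot 14\delta\cdot 7+\tfrac12 = 1568\delta+\tfrac12\le 1569\delta$ (using $\delta\ge\tfrac12$). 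For the weakly modular assertion I would instead produce a $(4\delta+2,4\delta+1)$-dismantling order, i.e.\ an $(s,s-1)$-order with $s=4\delta+2$, so that Theorem~\ref{weakly-modular} yields $184s = 736\delta+368$. Minor rounding of $8\delta,6\delta,4\delta+2$ to integers is harmless and I would absorb it silently.

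The core step is a \emph{localized} version of Corollary~\ref{cor-delta-implies-copwin} and Proposition~\ref{prop-VB}. Fix a BFS order $\preceq$ rooted at $v_0$ and, for a vertex $v$, let $c$ be the vertex at distance $\min\{4\delta,d(v,v_0)\}$ from $v$ on the BFS geodesic toward $v_0$. Given $y\in B_{8\delta}(v)\cap X_v$ (so $d(v,y)\le 8\delta$ and $d(y,v_0)\le d(v,v_0)$), I must show $d(c,y)\le 6\delta$, which is exactly the inclusion defining the $(8\delta,6\delta)^*$-dismantling step. Proposition~\ref{prop-VB} would give this at once from a single application of the four-point condition to $\{v,y,c,v_0\}$, but only \emph{local} hyperbolicity is available and $v_0$ is arbitrarily far from $v$. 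To repair this I would \emph{replace the distant root $v_0$ by a nearby vertex}: take a quasi-median $v_1v_2v_3$ of the triple $v,y,v_0$, with $v_3\in I(v,v_0)\cap I(y,v_0)$ the ``merge'' vertex. Since $v_1,v_2\in I(v,y)$ we have $d(v_1,v_2)\le d(v,y)\le 8\delta$, and the $f$-balanced condition applied to the metric triangle $v_1v_2v_3$ bounds $d(v_1,v_3),d(v_2,v_3)\le f(8\delta)\le f(12\delta)$; hence $d(v,v_3)\le d(v,v_1)+d(v_1,v_3)\le 8\delta+f(12\delta)$. The quasi-median relations together with $d(y,v_0)\le d(v,v_0)$ also give $d(y,v_3)\le d(y,v_2)+d(v_2,v_3)\le d(v,v_1)+d(v_1,v_3)=d(v,v_3)$, so the hypotheses of Proposition~\ref{prop-VB} hold with $z:=v_3$ in place of $v_0$. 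Running the four-point computation from the proof of Proposition~\ref{prop-VB} on $\{v,y,c,v_3\}$ then yields $d(c,y)\le 6\delta$, after a short case distinction according to whether $d(v,v_3)$ is smaller or larger than $4\delta$.

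The point of this substitution is that all four vertices $v,y,c,v_3$ now lie in the ball $B_{f(12\delta)+8\delta}(v)$: indeed $d(v,y)\le 8\delta$, $d(v,c)\le 4\delta$, and $d(v,v_3)\le 8\delta+f(12\delta)$. Since that ball is $\delta$-hyperbolic by hypothesis, the four-point inequality used above is valid, and the argument is entirely local. \textbf{The main obstacle is precisely this confinement}: the dismantling criterion compares distances of $y$ to the \emph{global} root $v_0$, whereas the only hyperbolicity we may invoke lives inside a single bounded ball. The whole weight of the $f$-balanced hypothesis goes into trading the far endpoint $v_0$ for the quasi-median vertex $v_3$, whose distance to $v$ is controlled by $f(12\delta)+O(\delta)$; this is exactly the metric replacement for the simple-connectivity hypothesis of Gromov's Theorem~\ref{Gromov}, and it is where I expect the careful bookkeeping to lie (verifying $d(y,v_3)\le d(v,v_3)$, that $c$ may be taken on $I(v,v_3)$, and handling the degenerate sub-cases $d(v,v_0)\le 4\delta$ and $d(v,v_3)<4\delta$).

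For the weakly modular refinement I would use that such graphs have equilateral metric triangles, so $f(k)=k$ is admissible and, by Lemma~\ref{lem-weakly-modular}, the quasi-median estimates become exact (namely $d(v_1,v_3)=d(v_2,v_3)=d(v_1,v_2)$). This both tightens the containing ball to radius $10\delta+5$ and lets me target the sharper $(s,s-1)$-dismantling parameters of Theorem~\ref{weakly-modular}, producing $736\delta+368$ in place of the generic $1569\delta$ bound; the localization argument is otherwise identical.
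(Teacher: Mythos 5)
Your overall plan coincides with the paper's: both localize Proposition~\ref{prop-VB} inside a ball by trading the far-away BFS root for a quasi-median vertex whose distance to $v$ is controlled by the $f$-balanced hypothesis, then read off a $(4\alpha,3\alpha)^*$-dismantling order with $\alpha=2\delta$ (resp.\ a $(2\alpha+2,2\alpha+1)$-order in the weakly modular case) and invoke Theorem~\ref{theorem-main} (resp.\ Theorem~\ref{weakly-modular}); your parameter arithmetic ($1568\delta+\tfrac12\le 1569\delta$ and $184(4\delta+2)=736\delta+368$) matches the paper's exactly. However, your execution of the key step has a genuine gap.

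You take the quasi-median $v_1v_2v_3$ of the triplet $(v,y,v_0)$ and then want to run the four-point computation of Proposition~\ref{prop-VB} on $\{v,y,c,v_3\}$. That computation requires $c\in I(v,v_3)$ with $d(v,c)=\min\{4\delta,d(v,v_3)\}$: the whole force of Proposition~\ref{prop-VB} comes from the alignment $d(c,v_3)=d(v,v_3)-d(v,c)$, and without it the four-point condition yields only $d(c,y)\le d(v,c)+d(v,y)+2\delta\ (=14\delta)$, which is no better than the triangle inequality and far from the needed $6\delta$. You list "$c$ may be taken on $I(v,v_3)$" as bookkeeping, but it is precisely the step that fails: $c$ lies on the BFS geodesic from $v$ to $v_0$, $v_3$ lies on some possibly different geodesic from $v$ to $v_0$, and two vertices of $I(v,v_0)$ need not be metrically aligned (already in a $4$-cycle, $a,b\in I(v,v_0)$ with $a\notin I(v,b)$). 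Nor can this be repaired from the hypotheses you have in hand: the only relation between $c$ and $v_3$ passes through $v_0$, which lies outside every ball whose hyperbolicity you are allowed to use, and geodesics from $v$ toward targets that are far apart inside the ball can diverge immediately even in a $0$-hyperbolic space -- the fact that they reconverge at $v_0$ is invisible locally. Finally, you cannot simply replace $c$ by a vertex $c^*\in I(v,v_3)$: $v_3$, and hence $c^*$, depends on $y$, whereas the dismantling condition requires a single eliminator working simultaneously for all $y\in B_{8\delta}(v)\cap X_v$. The paper's fix is small but essential: it takes the quasi-median $z'c'y'$ of the triplet $(v_0,c,y)$ -- anchored at the eliminator $c$ itself rather than at $v$. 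Then $z'\in I(v_0,c)$, and combining $c\in I(v,v_0)$ with $z'\in I(c,v_0)$ gives $c\in I(v,z')$ automatically; the $f$-balanced condition applied to the metric triangle $z'c'y'$ yields $d(c,z')\le f(d(c,y'))\le f(6\alpha)$, and the whole configuration (with all relevant geodesics) sits inside $B_{f(12\delta)+8\delta}(z')$, where Proposition~\ref{prop-VB} legitimately applies. Your weakly modular refinement inherits the same gap, since you state the localization argument is otherwise identical.
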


\begin{proof}
Consider a graph $G$ with $f$-balanced metric triangles where every
ball $B_{f(12\delta)+8\delta}(v)$ is $\delta$-hyperbolic and assume
that $G$ is not $1569\delta$-hyperbolic. From
Theorem~\ref{theorem-main}, it implies that $G$ is not
$(4\alpha,3\alpha)^*$-dismantlable for $\alpha = 2\delta$. Let $z$ be
an arbitrary vertex of $G$ and consider a BFS-order $\preceq$ of $G$
rooted at $z$. Since $G$ is not $(4\alpha,3\alpha)^*$-dismantlable,
there exist $x,y,c$ such that $c \in I(z,x)$, $d(c,x) = 2\alpha$,
$d(z,y) \leq d(z,x)$, $d(x,y) \leq 4\alpha$, and $d(c,y) >
3\alpha$.

Let $z'c'y'$ be a quasi-median of the triplet $z,c,y$. Since $z'\in
I(z,c)\subset I(z,x),$ $z'\in I(z,y),$ and $d(y,z)\le d(x,z),$
necessarily $d(y,z') \leq d(x,z')$.  Moreover, since $y' \in I(c,y)$,
$d(c,y') \leq d(c,y) \leq d(c,x)+d(x,y) \leq 6\alpha$.  Since the
metric triangles of $G$ are $f$-balanced, $d(c',z') \leq
f(d(c',y'))$. Since $f: \N \rightarrow \N$ is a strictly increasing
function, $d(c,z') = d(c,c')+d(c',z') \leq d(c,c') +f(d(c',y')) \leq
f(d(c,c')+d(c',y')) = f(d(c,y')) = f(6\alpha)$.  Consequently,
$d(y,z')\leq d(x,z') \leq d(x,c)+d(c,z') \leq
2\alpha+f(6\alpha)$. Since $d(x,y) \leq 4\alpha$, for every vertex $u
\in I(x,y)$, $d(u,z') \leq \min\{ d(u,x)+d(x,z'),d(u,y)+d(y,z')\}\le
2\alpha+2\alpha +f(6\alpha)=4\alpha+ f(6\alpha)$. Consequently, the
distance between $x$ and $y$ in the graph $G$ and in the ball $B :=
B_{f(6\alpha)+4\alpha}(z')$ are the same, thus $d_B(x,y) \leq
4\alpha$.  Note that, since $c \in I(x,z)$ and $z' \in I(c,z)$, $c \in
I(x,z')$. Since $I(x,z') \subseteq B$ and $I(y,z') \subseteq B$,
$d_B(c,x) = 2\alpha$ and $d_B(y,z') = d(y,z') \leq d(x,z') =
d_B(x,z')$.  Since $d_B(y,z') \leq d_B(x,z')$, $d_B(c,x) = 2\alpha$,
$d_B(x,y) \leq 4\alpha$, and $d_B(c,y) \geq d(c,y)>3\alpha$, from
Proposition~\ref{prop-VB} applied with $r = 2\alpha$,
$B_{f(6\alpha)+4\alpha}(z')$ is not
$\frac{\alpha}{2}$-hyperbolic. Thus, since $\alpha = 2\delta$, the
ball $B_{f(12\delta)+8\delta}(z')$ is not $\delta$-hyperbolic, which
is a contradiction.

Consider now a weakly modular graph $G$ where every ball
$B_{10\delta+5}(v)$ is $\delta$-hyperbolic and assume that $G$ is
not $(736\delta+368)$-hyperbolic.  From Theorem~\ref{weakly-modular},
it implies that $G$ is not $(2\alpha+2,2\alpha+1)^*$-dismantlable for
$\alpha = 2\delta$. Thus, there exist $x,y,z,c$ such that $c \in
I(z,x)$, $d(c,x) = \alpha+1$, $d(z,y) \leq d(z,x)$, $d(x,y) \leq
2\alpha+2$ and $d(c,y) > 2\alpha+1$. Let $z'c'y'$ be a quasi-median of
the triplet $z,c,y$. Using the same arguments as in the previous case
and the fact that metric triangles of weakly modular graphs are
equilateral, one can show that the ball $B_{5\alpha+5}(z')$ is not
$\frac{\alpha}{2}$-hyperbolic. Consequently the ball
$B_{10\delta+5}(z')$ is not $\delta$-hyperbolic, which is a
contradiction.
\end{proof}

\section*{Acknowledgements}

 We wish to thank the anonymous referees for a careful reading of the
 manuscript, their useful suggestions, and for pointing out the result
 of Soto~\cite{Soto-PhD}.

 J.C. was partially supported by ANR project MACARON
 (\textsc{anr-13-js02-0002}).  V.C. was partially supported by ANR
 projects TEOMATRO (\textsc{anr-10-blan-0207}) and GGAA
 (\textsc{anr-10-blan-0116}).

\end{document}